\numberwithin{equation}{section}
\newtheorem{thm}{Theorem}[section]
\newtheorem{cor}[thm]{Corollary}
\newtheorem{lem}[thm]{Lemma}
\newtheorem{prop}[thm]{Proposition}
\theoremstyle{remark}
\newtheorem*{rem*}{Remark}
\newtheorem{example}{Example}
\theoremstyle{definition}
\renewcommand{\bar}{\overline}
\newcommand{\Z}{\mathbb{Z}}
\newcommand{\N}{\mathbb{N}}
\newcommand{\SL}{\textup{SL}}
\renewcommand{\S}{\mathcal{S}}
\newcommand{\legendre}[2]{\pfrac{#1}{#2}}
\renewcommand{\pmatrix}[4]{\left( \begin{smallmatrix} #1 & #2 \\ #3 & #4 \end{smallmatrix} \right)}
\newcommand{\pMatrix}[4]{\left( \begin{matrix} #1 & #2 \\ #3 & #4 \end{matrix} \right)}
\newcommand{\pfrac}[2]{\left(\frac{#1}{#2}\right)}
\date{}
\author{Nickolas Andersen}
\address{Department of Mathematics\\
University of Illinois\\
Urbana, IL 61801} 
\email{nandrsn4@illinois.edu}
\title[Classifying Mock Theta Congruences]{Classification of Congruences for Mock Theta Functions and Weakly Holomorphic Modular Forms}
\begin{document}

\begin{abstract}
Let $f(q)$ denote Ramanujan's mock theta function
\[
	f(q) = \sum_{n=0}^{\infty} a(n) q^{n} := 1 + \sum_{n=1}^{\infty} \frac{q^{n^{2}}}{(1+q)^{2}(1+q^{2})^{2} \cdots (1+q^{n})^{2}}.
\]
It is known that there are many linear congruences for the coefficients of $f(q)$ and other mock theta functions. We prove that if the linear congruence $a(mn+t) \equiv 0 \pmod{\ell}$ holds for some prime $\ell \geq 5$, then $\ell | m$ and $\legendre{24t-1}{\ell} \neq \legendre{-1}{\ell}$. We prove analogous results for the mock theta function $\omega(q)$ and for a large class of weakly holomorphic modular forms which includes $\eta$-quotients. This extends work of Radu \cite{Radu:ao} in which he proves a conjecture of Ahlgren and Ono for the partition function $p(n)$.
\end{abstract}

\maketitle
\thispagestyle{empty}

\section{Introduction and Statement of Results}

In his famous last letter to Hardy, Ramanujan introduced 17 strange $q$-series which he called \emph{mock theta functions}, many of whose coefficients encode interesting combinatorial information. A prototypical example is the function
\begin{equation}
	f(q) = \sum_{n=0}^{\infty} a(n) q^{n} := 1 + \sum_{n=1}^{\infty} \frac{q^{n^{2}}}{(1+q)^{2}(1+q^{2})^{2}\cdots (1+q^{n})^{2}},
\end{equation}
whose coefficients are related to partition ranks (see \cite{AL}). The function $f(q)$ resembles the generating function for partitions in the following way: by separating a partition into its Durfee square and two partitions into parts of size $\leq n$, we have
\[
	\sum_{n=0}^{\infty} p(n) q^{n} = 1 + \sum_{n=1}^{\infty} \frac{q^{n^{2}}}{(1-q)^{2}(1-q^{2})^{2}\cdots (1-q^{n})^{2}}.
\]
The latter generating function has many interesting arithmetic properties arising from its relation to the modular form
\[
	\eta(z) := q^{1/24} \prod_{n=1}^{\infty} (1-q^{n}), \qquad q:=e^{2\pi i z},
\]
via the well-known relation
\[
	\frac{1}{\eta(z)} = \sum_{n=0}^{\infty} p(n) q^{n-1/24}.
\]

The arithmetic properties of $p(n)$ have been the subject of a vast amount of research, much of which has focused on congruence properties satisfied by $p(n)$ (for a few examples, see \cite{AB, Atkin:conj, Atkin, BW, FKO, LO}). We make particular mention of work of Ahlgren and Ono \cite{Ahlgren:dist, AO:conj, Ono:dist} which shows the existence of congruences
\[
	p(mn+t) \equiv 0 \pmod{\ell^{j}}
\]
for all prime powers $\ell^{j}$ with $\ell\geq 5$. In their systematic theory, the progressions $\{mn+t\}$ are subprogressions of $\{\ell n + \beta\}$, where $\beta$ lies in one of $(\ell+1)/2$ classes modulo $\ell$. In \cite{AO:conj}, they conjectured that congruences do not exist for $p(n)$ outside these progressions, and recently Radu \cite{Radu:ao} proved their conjecture. In particular, he showed that if
\[
	p(mn+t) \equiv 0 \pmod{\ell} 
\]
for some prime $\ell \geq 5$, then $\ell | m$ and $\legendre{24t-1}{\ell} \neq \legendre{-1}{\ell}$. 

Many authors have studied congruence properties of mock theta functions (see, for example, \cite{Alfes, BO:omega, BOPR, Chan, Waldherr}). Recent work of Fuller, Friedlander, Goodson, and the author \cite{AFFG} shows the existence of linear congruences for all of Ramanujan's mock theta functions. As an example, for any prime $\ell \geq 5$, the coefficients of $f(q)$ satisfy
\[
	a(mn+t) \equiv 0 \pmod{\ell}
\]
for infinitely many progressions $\{mn+t\}$. As with $p(n)$, in each of these cases $\ell | m$ and $\legendre{24t-1}{\ell} \neq \legendre{-1}{\ell}$. It is natural to ask whether the analogue of the conjecture in \cite{AO:prop} holds in this case as well. Here we will prove

\begin{thm} \label{thm:f-q}
	Let $\ell \geq 5$ be prime. If for all $n$,
	\[
		a(mn+t) \equiv 0 \pmod{\ell},
	\]
	then $\ell | m$ and $\legendre{24t-1}{\ell} \neq \legendre{-1}{\ell}$. 
\end{thm}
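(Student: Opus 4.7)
The strategy adapts Radu's approach for the partition function to the mock modular setting. Set $M(z) := q^{-1}f(q^{24}) = \sum_{n \ge 0} a(n)\, q^{24n-1}$, which (after work of Zwegers) is the holomorphic part of a harmonic weak Maass form $\widehat{M}$ of weight $1/2$ on a congruence subgroup of $\SL_2(\Z)$; its shadow is a multiple of a unary theta function of weight $3/2$. Assuming $a(mn+t) \equiv 0 \pmod{\ell}$ for all $n$, set $N := 24m$ and $r := 24t - 1$, so the coefficients of $M$ at exponents congruent to $r \pmod{N}$ all vanish modulo $\ell$. Using a linear combination of standard sieving operators (twists by Dirichlet characters modulo $N$, combined with $U_p$ and $V_p$), I would extract a form $M_{r,N}$ supported precisely on these exponents. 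Because these operators preserve the space of holomorphic parts of harmonic Maass forms (possibly at higher level), $M_{r,N}$ remains such a holomorphic part.

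The next step is to reduce to a congruence for honest holomorphic modular forms. Multiplying $M_{r,N}$ by a suitable power of $\eta$ (or another cuspidal $\eta$-quotient) produces a form whose completion has a shadow vanishing to high order at every cusp modulo $\ell$. Invoking a theorem of Treneer (extending earlier work of Ahlgren and Ono), one then obtains a holomorphic cusp form $g$ of some integral weight and level dividing a modest multiple of $N\ell$ whose reduction mod $\ell$ agrees, after multiplication by a power of $q$, with $M_{r,N} \pmod{\ell}$. The hypothesis then says that the coefficients of $g$ vanish mod $\ell$ on an entire arithmetic progression.

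If $\ell \nmid m$, one can locate a prime $p \nmid N\ell$ such that the Hecke operator $T_p$ acts on $g \pmod{\ell}$ in a way that, via Deligne--Serre and the resulting Galois representation, forces too many coefficients to be non-zero modulo $\ell$, contradicting the assumed congruence. Hence $\ell \mid m$. The second conclusion, $\legendre{24t-1}{\ell} \neq \legendre{-1}{\ell}$, is read off from the shadow: after sieving and reducing mod $\ell$, the shadow's theta series contributes non-trivially exactly on those progressions where the Legendre symbol condition fails, so such progressions can never support a congruence. The main obstacle is handling the mock (non-holomorphic) part of $\widehat{M}$: the sieving and multiplication steps must be arranged so that the hypothesis is preserved and the reduction modulo $\ell$ produces a genuine modular form to which the classical theory of congruences applies.
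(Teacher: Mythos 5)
Your overall skeleton (sieve to the progression, control the non-holomorphic part, reduce modulo $\ell$) matches the general setting, but the two steps that actually carry the theorem are gaps. First, the claim that when $\ell \nmid m$ one can choose a Hecke operator $T_p$ and use Deligne--Serre Galois representations to force ``too many'' nonzero coefficients mod $\ell$ is not an argument: mod-$\ell$ Galois representations attached to (Shimura lifts of) half-integral weight forms do not control vanishing of coefficients on a fixed arithmetic progression in the way you need, and this soft route is precisely what fails for the Ahlgren--Ono conjecture itself. The known mechanism, which the paper uses, is entirely different: one shows (via Lemma~\ref{lem:radu} and the $t\mapsto t_A$ action of Proposition~\ref{prop:M-m-t-M-m-t-A}) that the hypothesis forces $M_{Q,t}\equiv 0\pmod\ell$ with $(Q,6\ell)=1$, and then computes the \emph{leading coefficient of the expansion at the cusp $1/2$}, which is $Q^{-12Q}$, an $\ell$-adic unit; Theorems~\ref{thm:deligne-mod-p} and \ref{thm:deligne-coeff} of Deligne--Rapoport then give the contradiction. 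Nothing in your proposal produces this kind of explicit constant-term computation at another cusp, and without it the conclusion $\ell\mid m$ is unsupported.

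Second, the Legendre-symbol condition cannot be ``read off from the shadow.'' The non-holomorphic part of the sieved form vanishes exactly when the progression is good, i.e.\ when $\legendre{1-24t}{p}=-1$ for \emph{some} prime $p\mid m$; this is a condition at auxiliary primes dividing $m$, not specifically at $\ell$, so the shadow can vanish on a progression with $\legendre{24t-1}{\ell}=\legendre{-1}{\ell}$, and conversely a surviving non-holomorphic part does not by itself obstruct a congruence among the integers $a(mn+t)$ (the paper handles non-good progressions simply by passing to a good subprogression modulo $mp$ for a fresh prime $p$). In the paper the condition at $\ell$ is proved by a second explicit computation: assuming $\legendre{24t-1}{\ell}=\legendre{-1}{\ell}$, one finds $t'=t_A$ in the same orbit with $t'\equiv(1-Q^2)/24\pmod\ell$, so $M_{Q\ell,t'}\equiv 0\pmod\ell$, and then Proposition~\ref{prop:m-q-l-exp} shows the expansion of $\Delta^{s}M_{Q\ell,t'}^{48Q\ell^2}$ at the cusp $1/2\ell$ has unit leading coefficient $Q^{-24Q\ell^2}$, contradicting Theorem~\ref{thm:deligne-mod-p}. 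This step requires the multiplier computations (Dedekind sums, Lemma~\ref{lem:dedekind-sum}) that your outline never engages with; as written, both conclusions of the theorem remain unproved in your proposal.
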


All known congruences for $f(q)$ are constructed by applying results from the theory of modular forms (see, for instance \cite{Treneer:cong, Treneer:quad}) to certain twists of $f(q)$ which are known to be weakly holomorphic modular forms. Theorem \ref{thm:f-q} implies that all linear congruences $a(mn+t)\equiv 0 \pmod{\ell}$ with $\ell \nmid (24t-1)$ arise in this way.

\begin{cor} \label{cor:f-q}
Suppose that $a(mn+t) \equiv 0 \pmod{\ell}$ for all $n$, and that $\ell \nmid (24t-1)$. Then the function
\[
	q^\frac{t-1/24}{m}\sum a(mn+t) q^{n}
\]
is a weakly holomorphic modular form.
\end{cor}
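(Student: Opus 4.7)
The plan is to lift $G(z) := q^{(t-1/24)/m}\sum_{n\geq 0} a(mn+t)\,q^{n}$ to a weight-$\tfrac12$ harmonic Maass form $\hat G$ and then show that, under the stated hypotheses, the shadow of $\hat G$ vanishes identically. Since a harmonic Maass form with zero shadow is weakly holomorphic, this will yield the conclusion. Theorem~\ref{thm:f-q}, applied to the hypothesis $a(mn+t)\equiv 0\pmod\ell$, gives $\ell\mid m$ and $\legendre{24t-1}{\ell}\neq\legendre{-1}{\ell}$; combined with $\ell\nmid(24t-1)$, this implies $\legendre{1-24t}{\ell}=-1$.

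By the work of Zwegers and Bringmann--Ono, $F(z):=q^{-1/24}f(q)$ is the holomorphic part of a weight-$\tfrac12$ harmonic Maass form $\hat F$ on a suitable congruence subgroup, whose shadow is a weight-$\tfrac32$ unary theta series supported on the exponents $(6n+1)^{2}/24$ for $n\in\Z$. To obtain $\hat G$, I would sieve and rescale: the identity
\[
	F_{m,t}(z) \;:=\; \sum_{n\geq 0} a(mn+t)\,q^{mn+t-1/24} \;=\; \frac{1}{m}\sum_{c=0}^{m-1}\zeta_{m}^{-ct}\,\zeta_{24m}^{c}\,F\!\left(z+\tfrac{c}{m}\right)
\]
expresses the sieve as a linear combination of additive translates of $F$, and a direct check gives $G(z)=F_{m,t}(z/m)$. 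Lifting each translate of $F$ to the corresponding translate of $\hat F$ produces a harmonic Maass form $\hat F_{m,t}$, and the rescaling $\hat G(z):=\hat F_{m,t}(z/m)$ is then a weight-$\tfrac12$ harmonic Maass form on a congruence subgroup whose level one can track explicitly.

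Because $\xi_{1/2}$ commutes with real translation and is antilinear in scalar coefficients, applying $\xi_{1/2}$ to the displayed sieve identity produces an expansion in which the coefficient indexed by $n$ is proportional to
\[
	\frac{1}{m}\sum_{c=0}^{m-1} e^{2\pi i c(24t-1+(6n+1)^{2})/(24m)}.
\]
Using $(6n+1)^{2}\equiv 1\pmod{24}$, the numerator $24t-1+(6n+1)^{2}$ is divisible by $24$, so this sum equals $1$ when $(6n+1)^{2}\equiv 1-24t\pmod{24m}$ and $0$ otherwise. Since $\ell\mid m$, any such $n$ would force $1-24t$ to be a square modulo $\ell$, contradicting $\legendre{1-24t}{\ell}=-1$. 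Hence the shadow of $\hat F_{m,t}$ (and therefore of $\hat G$) vanishes identically, so $\hat G = G$ is a weakly holomorphic modular form.

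The main obstacle is the modular-forms bookkeeping: verifying the precise statement that $F$ is the holomorphic part of a harmonic Maass form with the claimed shadow, tracking the level and multiplier system through the sieve and rescaling, and identifying the shadow of $\hat G$ with the sieved shadow of $\hat F$. The arithmetic in the final step is essentially the same Legendre-symbol computation that underlies the proof of Theorem~\ref{thm:f-q}.
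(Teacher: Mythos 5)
Your proposal is correct and is essentially the paper's own argument: Theorem~\ref{thm:f-q} together with $\ell\nmid(24t-1)$ gives $\legendre{1-24t}{\ell}=-1$, so the progression $t\pmod m$ is \emph{good}, and your sieved form $\hat F_{m,t}(z/m)$ is exactly the paper's $M_{m,t}$ from \eqref{eq:def-M-m-t}, whose weak holomorphy for good progressions the paper quotes from \cite{AK}, with \eqref{eq:M-m-t-exp} playing the role of your sieve identity. The only difference is that where the paper cites \cite{AK}, you sketch that ingredient directly via the shadow computation (the exponents $(6n+1)^2/24$ cannot satisfy $(6n+1)^2\equiv 1-24t \pmod{24m}$ since $1-24t$ is a non-residue modulo $\ell\mid m$), which is precisely the mechanism behind the cited result.
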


We will prove analogous statements for the mock theta function
\begin{equation}
	\omega(q) = \sum_{n=0}^{\infty} c(n) q^{n} := 1 + \sum_{n=1}^{\infty} \frac{q^{2n^{2}+2n}}{(1+q)^{2}(1+q^{3})^{2}\cdots(1+q^{2n-1})^{2}},
\end{equation}
which appears naturally with $f(q)$ as a component of a vector-valued mock modular form (see, for example, \cite{Zwegers}).

\begin{thm} \label{thm:omega-q}
Let $\ell \geq 5$ be prime. If for all $n$,
\[
	c(mn+t) \equiv 0 \pmod{\ell},
\]
then $\ell | m$ and $\legendre{3t+2}{\ell} \neq \legendre{-1}{\ell}$. 
Furthermore, if $\ell \nmid (3t+2)$ then the function
\[
	q^\frac{t+2/3}{m}\sum c(mn+t) q^{n}
\] 
is a weakly holomorphic modular form.
\end{thm}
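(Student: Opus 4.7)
My plan is to mirror the proof of Theorem \ref{thm:f-q} but with the modular completion of $\omega$ in place of that of $f$, and with the fractional index $2/3$ in place of $-1/24$. By Zwegers's work, $q^{2/3}\omega(q)$ is the holomorphic part of a weight-$1/2$ harmonic Maass form $\hat{\omega}$ on a congruence subgroup whose shadow is a weight-$3/2$ unary theta function supported on indices $n$ with $3n+2$ in a fixed square class. This is the origin of the quantity $3t+2$ (in place of $24t-1$) in the statement.

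Assuming $c(mn+t)\equiv 0\pmod{\ell}$ for all $n$, the first step is to realize the sieved series
\[
	G(z) := q^{(t+2/3)/m}\sum_{n} c(mn+t)\,q^{n}
\]
as a linear combination of images of $\hat{\omega}$ under standard arithmetic operators, namely diagonal matrices in $\SL_2(\Q)$ and twists by additive characters modulo $m$. This places $G$ inside a fixed space of weight-$1/2$ harmonic Maass forms on some $\Gamma_0(M)$ with $m \mid M$, and identifies the level and Nebentypus data needed for the next step.

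The main obstacle is to show that, when $\ell\nmid(3t+2)$, the non-holomorphic part of $G$ actually vanishes, so that $G$ is in fact a weakly holomorphic modular form of weight $1/2$. Because the shadow theta is supported on a single residue class of $3n+2$, the sieve $n\equiv t\pmod{m}$ with $\ell\nmid(3t+2)$ removes that support once it is combined with an $\ell$-twist in the spirit of the Ahlgren--Ono and Radu arguments. Pushing this argument through rigorously -- tracking the shadow under the sieve operators and verifying that no spurious non-holomorphic contribution survives -- is the technical heart of the proof and establishes the second sentence of the theorem.

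Once $G$ is known to be weakly holomorphic modular, I would invoke the paper's general classification theorem for congruences of weakly holomorphic modular forms and apply it to $G$ to conclude $\ell\mid m$ and $\legendre{3t+2}{\ell}\neq\legendre{-1}{\ell}$, in exact parallel with the proof of Theorem \ref{thm:f-q}. In the complementary case $\ell\mid(3t+2)$, the quadratic condition is automatic since $\legendre{3t+2}{\ell}=0$, and a shorter analogous analysis of the twisted harmonic Maass form at $\ell$ forces $\ell\mid m$, completing the proof.
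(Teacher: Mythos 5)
Your high-level plan --- sieve the completed harmonic Maass form $\Omega$ associated to $\omega(q)$ and mirror the argument for $f(q)$ --- agrees with the paper, but the two steps you identify as the core of the proof are not the paper's steps, and as written they do not work. You make the vanishing of the non-holomorphic part of $G=\Omega_{m,t}$ the ``technical heart,'' to be achieved by combining the sieve with ``an $\ell$-twist.'' But the criterion (quoted from \cite{AK}) for the non-holomorphic part of $\Omega_{m,t}$ to vanish is that \emph{some prime $p\mid m$} satisfies $\legendre{-3t-2}{p}=-1$; using $p=\ell$ here presupposes $\ell\mid m$, which is part of what you are trying to prove, so your argument is circular. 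The paper runs the logic in the opposite direction: it first reduces to such ``good'' progressions by passing to a subprogression $T\pmod{mp}$ for an \emph{auxiliary} prime $p\nmid \ell m$ (chosen so the conclusions for $T\pmod{mp}$ descend to $t\pmod m$), proves the first assertion of the theorem there, and only then deduces the second assertion: when $\ell\nmid(3t+2)$, the inequality $\legendre{3t+2}{\ell}\neq\legendre{-1}{\ell}$ together with $\ell\mid m$ forces $\legendre{-3t-2}{\ell}=-1$, so the progression is good and $\Omega_{m,t}=q^{(t+2/3)/m}\sum c(mn+t)q^n$ is weakly holomorphic. Weak holomorphy is a \emph{consequence} of the congruence constraints, not a prerequisite for them.

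The second gap is your plan to ``invoke the general classification theorem for weakly holomorphic modular forms and apply it to $G$.'' Theorem~\ref{thm:weak-hol} classifies congruences $a_f(mn+t)\equiv 0$ for a fixed $f\in\mathcal{S}(B,k,N,\chi)$ with $\ell\nmid N$ and $(m_B,N)=1$; it cannot be applied to the already-sieved series $G$ (all of whose coefficients vanish mod $\ell$ and whose level contains $m$), and it does not apply to $\omega$ itself, which is mock rather than weakly holomorphic --- this is precisely why the paper proves Theorems~\ref{thm:f-q} and~\ref{thm:omega-q} by separate arguments rather than as corollaries of Theorem~\ref{thm:weak-hol}. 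The machinery that actually carries the proof is absent from your outline: the transformation law $\Omega_{m,t}^{24m}\big|_{12m}A=\Omega_{m,t_A}^{24m}$ combined with Lemma~\ref{lem:radu} (applied with $B=16$) to move between progressions $t_A$; the Deligne--Rapoport integrality theorems; the expansion of $\Omega_{Q,t}$ at the cusp $1$ with leading coefficient $(-1)^Q(2Q)^{-12Q}$, which forces $\ell\mid m$; and the explicit Dedekind-sum computation (Proposition~\ref{prop:O-q-l-exp}) of the leading coefficient of $\Omega_{Q\ell,t}^{48Q\ell^2}\Delta^s$ at the cusp $1/\ell$ when $t\equiv-(Q^2+32)/48\pmod\ell$, which yields the contradiction under the assumption $\legendre{3t+2}{\ell}=\legendre{-1}{\ell}$. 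Without these ingredients the proposal does not constitute a proof.
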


It is natural to ask whether these results extend to a larger class of modular forms. Treneer \cite{Treneer:cong, Treneer:quad} has extended the results of \cite{AO:prop} to cover any weakly holomorphic modular form. In Section \ref{sec:weak-hol} we prove a general theorem which applies to the class of $\eta$-quotients, which appear as generating functions of many combinatorial objects. Using the standard notation
\[
	f(z) = \prod_{\delta|N} \eta(\delta z)^{r_{\delta}},
\]
we define $B := \sum \delta r_{\delta}$. If $m$ is a positive integer, write $m=2^u3^vm'$ and $B=2^r3^sB'$ with $(m',6)=(B',6)=1$ and define a divisor $m_B$ of $m$ by
\begin{equation} \label{eq:def-m-B}
	m_B := 2^{\min(r,u)} 3^{\min(s,v)} m'.
\end{equation}
The following theorem is an analgoue of Theorems \ref{thm:f-q} and \ref{thm:omega-q} for $\eta$-quotients.

\begin{thm} \label{thm:eta}
Let $\ell\geq 5$ be prime. Suppose that $f(z) = q^{B/24} \sum a_{f}(n) q^{n}$ is an $\eta$-quotient as above with $B<0$ and $\ell \nmid BN$. If $m$ and $t$ are positive integers with $(m_B,N)=1$ and
\[
	a_{f}(mn+t) \equiv 0 \pmod{\ell}
\]
for all $n$,
then $\ell | m$ and $\legendre{24t+B}{\ell} \neq \legendre{B}{\ell}$.
\end{thm}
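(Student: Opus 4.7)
The plan is to deduce Theorem \ref{thm:eta} from the general half-integral weight machinery already developed for Theorems \ref{thm:f-q} and \ref{thm:omega-q}. First, pass from $f$ to its rescaled version $F(z) := f(24z) = \sum_n a_f(n) q^{24n+B}$. Because $f = \prod_{\delta|N}\eta(\delta z)^{r_{\delta}}$ is an $\eta$-quotient, $F$ is a weakly holomorphic modular form of weight $k = \tfrac{1}{2}\sum r_{\delta}$ on $\Gamma_0(24N)$ with a nebentypus that, by the standard description of the $\eta$-multiplier, incorporates a quadratic character determined by $B$ (essentially $\legendre{(-1)^{k}B}{\cdot}$). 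The Fourier expansion of $F$ is supported on exponents $\equiv B \pmod{24}$, and the hypothesis $a_{f}(mn+t) \equiv 0 \pmod{\ell}$ for all $n$ becomes the statement that the coefficient of $q^{M}$ in $F$ vanishes mod $\ell$ whenever $M \equiv 24t + B \pmod{24m}$.

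Next, apply the technical principle underlying the earlier theorems: if a nonzero weakly holomorphic modular form of half-integer weight, with nebentypus of the above shape, has Fourier coefficients vanishing mod $\ell$ on a residue class $a \pmod{24m}$, then $\ell \mid m$ and $\legendre{a}{\ell} \neq \legendre{B}{\ell}$. Taking $a = 24t+B$ yields exactly the desired conclusion. The nonvanishing of $F \pmod{\ell}$ is immediate: $B < 0$ forces a pole at $\infty$ with leading coefficient $1$, and $\ell \nmid BN$ prevents $\ell$-adic degeneration of either the $\eta$-multiplier character or the level.

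The main obstacle, and the point at which the coprimality $(m_B, N) = 1$ is needed, is the interaction of the sieving operators $U_m$ and $V_m$ (which extract the subprogression $n \equiv t \pmod{m}$) with the level structure $\Gamma_0(24N)$. The definition of $m_B$ in \eqref{eq:def-m-B} is engineered so that the $2$- and $3$-parts of $m$ that may divide $N$ are absorbed into the rescaling $z \mapsto 24z$, while the remaining part of $m$ must be coprime to $N$ for the sieve to commute with the modular action at level $N$. Without this, the quadratic twist by $\legendre{\cdot}{\ell}$ used in the Shimura-style analysis (separating exponents by Legendre symbol mod $\ell$) would produce forms of uncontrolled level, and the argument would break down. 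Once $(m_B, N) = 1$ is in force, the twist analysis for $F$ proceeds in the same manner as in the proofs of Theorems \ref{thm:f-q} and \ref{thm:omega-q}, yielding both $\ell \mid m$ and the Legendre symbol inequality.
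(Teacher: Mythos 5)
Your proposal has a genuine gap at its core: the ``technical principle'' you invoke in the second paragraph --- that any nonzero weakly holomorphic form of half-integral weight with the right nebentypus whose coefficients vanish mod $\ell$ on a class $a \pmod{24m}$ must satisfy $\ell \mid m$ and $\legendre{a}{\ell} \neq \legendre{B}{\ell}$ --- is not something that can be ``applied''; it is essentially the theorem to be proved, and no such general statement is established by the proofs of Theorems \ref{thm:f-q} and \ref{thm:omega-q}. Those proofs are not instances of a uniform half-integral-weight principle: they depend on the explicit multiplier systems of $M(z)$ and $\Omega(z)$ (Dedekind sums, the specific roots of unity $w$, $w_1$, $w_2$) and, crucially, on explicit computations of the leading coefficient of the sieved form at a cusp other than $\infty$, which is then played off against the Deligne--Rapoport theorems to produce a contradiction. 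To cover $\eta$-quotients one must redo all of this work with the $\eta$-multiplier \eqref{eq:eta-mult}: establish the transformation $f_{m,t}^{24mN}|_{\kappa}A = f_{m,t_A}^{24mN}$, use Lemma \ref{lem:radu} to populate enough progressions $t_A$, prove $\ell \mid m$ by computing the expansion of $f_{Q,t}$ at the cusp $1/N$ (this step is entirely absent from your proposal), and finally compute the leading coefficient of $f_{Q\ell,t}$ at a cusp of the form $1/M\ell$ to rule out $\legendre{24t+B}{\ell} = \legendre{B}{\ell}$. This is the content of Theorem \ref{thm:weak-hol} and Propositions \ref{prop:f-Q-ell} and \ref{prop:wh-exp-at-cusp}, none of which your argument supplies or replaces.

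Two further points. First, your principle is not even correctly formulated: in the general setting the right-hand symbol is $\legendre{24n_0+B}{\ell}$, where $n_0$ is the exponent of the leading term of $f$; it collapses to $\legendre{B}{\ell}$ here only because an $\eta$-quotient with $B<0$ has $n_0=0$ and leading coefficient $1$. A statement that ignores the order of the pole at $\infty$ cannot be true, since the comparison symbol is detected precisely by that leading exponent. Second, there is no Shimura-style quadratic twist ``separating exponents by Legendre symbol mod $\ell$'' anywhere in the actual mechanism --- that device belongs to the constructions of congruences (Treneer, etc.), not to their classification --- and your description of the role of $(m_B,N)=1$ (absorbing $2$- and $3$-parts into $z\mapsto 24z$) does not correspond to how the hypothesis is used: it guarantees $(Q,N)=1$ for the $\ell$-free part $Q$ of $Q_{m,B}$, which is what makes the cusp computations and the application of Lemma \ref{lem:radu} go through. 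The paper's own reduction also differs from yours in a harmless but telling way: it writes $f=\eta^B(z)F(z)$ with $F\in M_k^!(\Gamma_0(N^4),\chi)$ via the Newman conditions, rather than rescaling to $f(24z)$; either normalization would be acceptable, but neither supplies the missing core argument.
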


We give some applications of Theorem~\ref{thm:eta}.

\begin{example}
	Let $p_{k}(n)$ denote the number of partitions of $n$ into $k$ colors. Then we have the generating function
\[
	\sum_{n=0}^{\infty} p_{k}(n) q^{n-k/24} = \eta^{-k}(z).
\]
In \cite{Andrews:multi}, Andrews showed that if $\ell \geq 5$ is prime and $\legendre{8t+1}{\ell}\neq 1$ then
\begin{equation} \label{eq:k-color-cong}
	p_{\ell-3} (\ell n + t) \equiv 0 \pmod{\ell}.
\end{equation}
Applying Theorem $\ref{thm:eta}$ with $B=3-\ell$ we obtain the necessary condition $\legendre{24t+3-\ell}{\ell} \neq \legendre{3-\ell}{\ell}$, which is equivalent to $\legendre{8t+1}{\ell} \neq 1$. So we have
\begin{cor}
	Let $\ell \geq 5$ be prime. Then
\[
	p_{\ell-3}(\ell n+t) \equiv 0 \pmod{\ell}
\]
if and only if $\legendre{8t+1}{\ell} \neq 1$.
\end{cor}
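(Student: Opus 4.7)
The corollary is an if-and-only-if, so I plan to treat each implication separately. The ``if'' direction is precisely Andrews' congruence \eqref{eq:k-color-cong}, which is cited in the discussion just above and therefore requires no further argument. All the content of the corollary therefore lies in the ``only if'' direction.

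For the converse, I would apply Theorem~\ref{thm:eta} to the $\eta$-quotient $f(z) = \eta(z)^{-(\ell-3)}$, whose $q$-expansion is $q^{-(\ell-3)/24}\sum_{n\geq 0} p_{\ell-3}(n) q^n$, so that $a_f(n) = p_{\ell-3}(n)$, $N = 1$, and $B = 3-\ell$. Verifying the hypotheses of Theorem~\ref{thm:eta} with $m = \ell$ is immediate: $B < 0$ since $\ell \geq 5$; the condition $\ell \nmid BN$ reduces to $\ell \nmid 3$; and $(m_B, N) = 1$ holds trivially since $N = 1$. The theorem then supplies the necessary condition
\[
\legendre{24t+B}{\ell} \neq \legendre{B}{\ell}.
\]

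All that is left is a short Legendre-symbol computation. Reducing modulo $\ell$ inside the numerators gives
\[
\legendre{24t + 3 - \ell}{\ell} = \legendre{24t+3}{\ell} = \legendre{3}{\ell}\legendre{8t+1}{\ell}, \qquad \legendre{3-\ell}{\ell} = \legendre{3}{\ell},
\]
so dividing out the nonzero factor $\legendre{3}{\ell}$ collapses the inequality to $\legendre{8t+1}{\ell} \neq 1$, which is exactly the condition claimed by the corollary. I foresee no real obstacle: the substantive work is absorbed into Theorem~\ref{thm:eta}, and only the bookkeeping of its hypotheses for this specific $\eta$-quotient together with the elementary rewriting of the two Legendre symbols remains.
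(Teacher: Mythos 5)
Your proposal is correct and matches the paper's own argument exactly: the ``if'' direction is Andrews' congruence, and the ``only if'' direction is Theorem~\ref{thm:eta} applied to $\eta^{-(\ell-3)}(z)$ with $B=3-\ell$, followed by the observation that $\legendre{24t+3-\ell}{\ell}\neq\legendre{3-\ell}{\ell}$ is equivalent to $\legendre{8t+1}{\ell}\neq 1$. Your explicit verification of the hypotheses and the factorization $24t+3=3(8t+1)$ is exactly the bookkeeping the paper leaves implicit.
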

\end{example}

\begin{example}
	Broken $1$-diamond partitions are defined by the generating function
\[
	\sum_{n=0}^{\infty} \Delta_{1}(n) q^{n-1/6} = \frac{\eta(2z)\eta(3z)}{\eta^{3}(z)\eta(6z)}.
\]
In \cite{Mortenson}, Mortenson showed that if $\ell$ is a prime satisfying
\[
	\ell \equiv 1, 25, 37, 47, 59, \text{ or }83 \pmod{84}
\]
then there exist infinitely many progressions $\{mn+t\} \subseteq \{\ell n + \beta\}$ for each $\beta$ satisfying $\legendre{6\beta-1}{\ell} \neq \legendre{-1}{\ell}$ such that
\[
	\Delta_{1}(mn+t) \equiv 0 \pmod{\ell}.
\]
Theorem~\ref{thm:eta} shows that any such congruence with $m$ odd must satisfy $\{mn+t\}\subseteq \{\ell n+\beta\}$ for some $\beta$ satisfying $\legendre{6\beta-1}{\ell} \neq \legendre{-1}{\ell}$.
\end{example}

\begin{example}
The Andrews-Stanley partition function $t(n)$ (see \cite{Andrews:stanley, Stanley:prob, Stanley}) is defined as the number of partitions $\pi$ having the property that the number of odd parts of $\pi$ is congruent to the number of odd parts of the conjugate partition $\pi'$ modulo $4$. The generating function for $t(n)$ is
\[
	\sum_{n=0}^{\infty} t(n) q^{n-1/24} = \frac{\eta^{2}(2z)\eta^{5}(16z)}{\eta(z)\eta^{5}(4z)\eta^{2}(32z)}.
\]
In \cite{Swisher}, Swisher proved that for each prime $\ell \geq 5$ there are infinitely many progressions $\{mn+t\}$ for which $t(n)$ and $p(n)$ satisfy the simultaneous congruences
\[
	t(mn+t) \equiv p(mn+t) \equiv 0 \pmod{\ell}.
\]
Theorem~\ref{thm:eta} shows that any linear congruence
\[
	t(mn+t) \equiv 0 \pmod{\ell}
\]
must satisfy $\ell | m$ and $\legendre{24t-1}{\ell} \neq \legendre{-1}{\ell}$, mirroring the $p(n)$ case.
\end{example}

\begin{example}
	The assumption $\ell \nmid NB$ is necessary in general. Define a sequence $a_{\ell}(n)$ of integers modulo $\ell$ by the relation
	\[
		\sum_{n=0}^{\infty} a_{\ell}(n) q^{n-\ell/24} := \eta^{-1}(\ell z) \equiv \eta^{-\ell}(z) \pmod{\ell}.
	\]
	Since $\eta^{-1}(\ell z)$ and $\eta^{-\ell}(z) \pmod{\ell}$ are supported only on exponents divisible by $\ell$, the sequence $a_{\ell}(n)$ satisfies $a_{\ell}(\ell n + t)\equiv 0 \pmod{\ell}$ for all $t$ coprime to $\ell$.

	The assumption $(m_B,N)=1$ is also necessary. We give examples illustrating two cases when $(m,N)>1$. Let $\delta>1$ and define
	\[
		\sum_{n=0}^\infty b_\delta(n) q^{n-\delta/24} := \eta^{-1}(\delta z).
	\]
	Then $b_{\delta}(n) = 0$ unless $\delta | n$, so $b_{\delta}(\delta n + t)=0$ for all $t$ not divisible by $\delta$. Here $B=-\delta$ and $N=\delta$, so $(m_B,N)=(m,N)=\delta$. Now consider the sequence $c_3(n)$ defined by
	\[
		\sum_{n=0}^\infty c_3(n) q^{n-1/6} := \eta^{-1}(z) \eta^{-1}(3z).
	\]
	The sequence $c_3(n)$ does not vanish trivially on any arithmetic progression as does $b_\delta(n)$. Here $B=-4$ and $N=3$, so $(m_B,N)=1$. In this case $m$ is allowed to be a multiple of $3$.
\end{example}

\section{Preliminaries} \label{sec:preliminaries}

Ramanujan's mock theta functions are examples of weight $1/2$ mock modular forms, which are the holomorphic parts of harmonic Maass forms (see Sections 6 and 7 of \cite{Ono:master} for definitions and details). Each harmonic Maass form $F$ decomposes uniquely as $F = f + NH$, where $f$ is the holomorphic part (or mock modular form) and $NH$ is the non-holomorphic part. 

Define
\begin{gather*}
	g_{0}(z) := \sum_{n\in \Z} (-1)^n (n+1/3) \, q^{\frac{3}{2}\left(n+\frac{1}{3}\right)^2}, \\
	g_1(z) := - \sum_{n\in\Z} (n+1/6) \, q^{\frac{3}{2}\left(n+\frac{1}{6}\right)^2}.
\end{gather*}
Then the functions
\begin{gather}
	M(z) := q^{-1/24} f(q) - 2i\sqrt{3} \int_{-\bar{z}}^{i\infty} \frac{g_1(\tau)}{(-i(z+\tau))^\frac{1}{2}} d\tau
\end{gather}
and
\begin{gather}
	\Omega(z) := 2q^{2/3} \omega(q) - 2i\sqrt{3} \int_{-\bar{2z}}^{i\infty} \frac{g_0(\tau)}{(-i(2z+\tau))^\frac{1}{2}} d\tau
\end{gather}
are the completed harmonic Maass forms associated to $f(q)$ and $\omega(q)$, respectively. These functions satisfy the following transformation laws (see \cite[Theorems 2.1--2.4]{Andrews:WD}, \cite[Proof of Corollary 2.3]{BO}, \cite[(3.13) and (4.2)]{AK}). For $A =\pmatrix{a}{b}{c}{d} \in \Gamma_{0}(2)$ with $c>0$ we have
\begin{equation} \label{eq:M-transform}
	M \pfrac{az+b}{cz+d} = w(A) \ (cz+d)^{1/2} M(z),
\end{equation}
where $w(A)$ is the $24$th root of unity given by
\begin{equation}
	w(A) := i^{-\frac{1}{2}} (-1)^{\frac{c+1+ad}{2}} \exp\left( 2\pi i \left(-\frac{1}{2}s(-d,c) - \frac{a+d}{24c} - \frac{a}{4} + \frac{3cd}{8} \right) \right)
\end{equation}
and $s(d,c)$ is the Dedekind sum defined by
\begin{equation} \label{eq:def-ded-sum}
	s(d,c) := \sum_{r=1}^{c-1} \left( \frac{r}{c} - \left\lfloor\frac{r}{c}\right\rfloor - \frac{1}{2} \right)  \left( \frac{dr}{c} - \left\lfloor\frac{dr}{c}\right\rfloor - \frac{1}{2} \right).
\end{equation}
For $A = \pmatrix{a}{b}{c}{d} \in \SL_{2}(\Z)$ with $c>0$ we have
\begin{equation} \label{eq:O-transform}
	\Omega(Az) = 
	\begin{cases}
		w_{1}(A) (cz+d)^{1/2} \Omega(z) & \text{if $c$ is even,} \\
		w_{2}(A) 2^{-1/2} (cz+d)^{1/2} M(z/2) & \text{if $d$ is even,}
	\end{cases}
\end{equation}
where $w_{1}(A)$ and $w_{2}(A)$ are roots of unity defined by
\begin{equation}
\begin{aligned}
	w_{1}(A) &:= i^{1/2} \exp\left( 2\pi i \left(\frac{a-1}{4} -\frac{s(-d,c/2)}{2} + \frac{3ab}{4} - \frac{a+d}{12c} \right)\right),  \\
	w_{2}(A) &:= (-i)^{1/2} \exp\left( 2\pi i \left( \frac{32a-d}{48c} -\frac{s(-d/2,c)}{2} - \frac{2a+b-3-3ab+3a/c}{4} \right)\right).
\end{aligned}
\end{equation}

The Dedekind sum \eqref{eq:def-ded-sum} satisfies the following transformation law which is an easy consequence of \cite[Lemma 2]{lewis}.

\begin{lem} \label{lem:dedekind-sum}
Let $m,\lambda$ be positive integers with $(m,6)=1$. Then for every $\pmatrix{a}{b}{c}{d} \in \Gamma_{0}(m)$ with $c>0$ and $(a,6)=1$ we have
\begin{equation} \label{eq:s(d,c)-transformation}
	s(-d + \lambda c, mc) = s(-d,mc) + \lambda \frac{1-a^{2}}{12m} + \text{an even integer}.
\end{equation}
\end{lem}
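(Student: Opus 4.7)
The plan is to derive the claimed congruence directly from Lewis's shift formula \cite[Lemma 2]{lewis} for Dedekind sums. The key structural observation is that $c$ divides $mc$, so the shift $-d \mapsto -d + \lambda c$ in the first argument is precisely a shift by a $\lambda$-fold multiple of the divisor $c$ of the denominator $mc$, which is exactly the setup that a shift lemma of this type is designed to handle.

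First I would apply Lewis's lemma $\lambda$ times, once for each unit shift by $c$, to express
\[
    s(-d + \lambda c, mc) - s(-d, mc) = \lambda \, R(a,c,d,m) \pmod{2\Z},
\]
where $R$ is an explicit rational function coming from a single application. Since the matrix, and hence the ``main term'' produced at each step, is the same throughout the iteration, the parity contributions from each step are identical, so any integer error terms pair up and vanish modulo $2\Z$.

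Second I would simplify $R$ using the modular relations imposed by $A \in \Gamma_{0}(m)$. From $ad - bc = 1$ together with $m \mid c$ we get $ad \equiv 1 \pmod{m}$, so $d^{-1} \equiv a \pmod{m}$; Lewis's formula naturally involves an inverse of $d$ modulo $m$, and this substitution converts the expression into one in $a$. The hypothesis $(m,6) = 1$ ensures that the denominator $12m$ makes sense as a rational number modulo $2\Z$, while $(a,6) = 1$ forces $a^{2} \equiv 1 \pmod{24}$, which is what makes the answer collapse to the clean fraction $(1 - a^{2})/(12m)$.

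The main obstacle will be the careful $2$-adic bookkeeping required to turn Lewis's identity (stated for a single shift, with an explicit integer error term) into the iterated statement modulo $2\Z$; in particular one must verify that after $\lambda$ applications the accumulated error is genuinely even. As a sanity check, at $\lambda = m$ the left-hand side is $0$ by periodicity of the Dedekind sum in its first argument modulo $mc$, and the right-hand side becomes $(1 - a^{2})/12$, which is an even integer precisely because $(a,6) = 1$ implies $a^{2} \equiv 1 \pmod{24}$. This consistency check both confirms the shape of the identity and pinpoints the role of each hypothesis on $a$, $m$, and $6$.
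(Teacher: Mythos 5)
Your approach is essentially the paper's: iterate Lewis's single-shift formula \cite[Lemma 2]{lewis} $\lambda$ times and add up the contributions. Two details in your write-up need repair, both of which the paper's proof handles. First, the matrix fed to Lewis's lemma is \emph{not} the same at every step: after the first argument has been shifted to $-d+jc$, you must apply the lemma to a matrix of $\Gamma_{0}(m)$ whose lower-right entry is the current value $-d+jc$; the paper does this explicitly with the family $\pmatrix{-a}{b-ja}{c}{-d+jc} \in \Gamma_{0}(m)$, $0\leq j\leq \lambda-1$. What makes the iteration work is not constancy of the matrix but the fact that its upper-left entry is $-a$ for every $j$, so the main term $(1-(-a)^{2})/(12m)=(1-a^{2})/(12m)$ is the same at each step. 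Second, your parity argument (``identical error terms pair up and vanish modulo $2\Z$'') fails for odd $\lambda$: $\lambda$ copies of one and the same odd integer would sum to an odd integer. The correct reason the accumulated error is even is simply that each single application of Lewis's Lemma 2(i) already produces an \emph{even} integer error (this is how the paper quotes it, after correcting a sign), so the sum of $\lambda$ even integers is even; no pairing is needed. Your sanity check at $\lambda=m$, resting on $a^{2}\equiv 1 \pmod{24}$ for $(a,6)=1$, is correct and does pinpoint the role of the hypotheses.
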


\begin{proof}
	From \cite[Lemma 2(i)]{lewis} we have (correcting a sign error),
\[
	s(d+c,mc) = s(d,mc) + \frac{1-a^{2}}{12m} + \text{an even integer}.
\]
Applying this iteratively to the matrices
\[
	\pMatrix{-a}{b-ja}{c}{-d+jc} \in \Gamma_{0}(m), \quad 0\leq j\leq \lambda-1,
\]
we obtain \eqref{eq:s(d,c)-transformation}.
\end{proof}

We recall the transformation law for $\eta(z)$ (see \cite[Chapter 4, Theorem 2]{Knopp}). If $A=\pmatrix{a}{b}{c}{d} \in \SL_{2}(\Z)$ with $c>0$ we have
\begin{equation} \label{eq:eta-transformation}
	\eta\pfrac{az+b}{cz+d} = \xi(A) (cz+d)^{1/2} \, \eta(z),
\end{equation}
where
\begin{equation} \label{eq:eta-mult}
	\xi(A) = 
	\begin{cases}
		\legendre{c}{|d|} \exp\left( \frac{2\pi i}{24}\left[ (a+d)c - bd(c^{2}-1) + 3d - 3 - 3cd \right] \right) & \text{ if $c$ is even,} \\
		\legendre{d}{|c|} \exp\left( \frac{2\pi i}{24}\left[ (a+d)c - bd(c^{2}-1) -3c \right] \right) & \text{ if $c$ is odd.}
	\end{cases}
\end{equation}

The proofs below adapt Radu's methods in \cite{Radu:ao, Radu:sub} and require the following technical lemma which is a generalization of \cite[Theorem 4.2]{Radu:sub}. Given an integer $B=2^r3^sB'$ and a positive integer $m=2^u3^vm'$ with $(B',6)=(m',6)=1$, define $Q_{m,B}:=2^\alpha3^\beta m'$, where
\begin{equation} \label{eq:def-alpha-beta}
	\alpha := 
	\begin{cases}
		0 &\text{ if } r=0,\\
		\min(r,u) &\text{ if } r=1,2,\\
		u &\text{ if } r\geq 3,
	\end{cases}
	\qquad \qquad
	\beta :=
	\begin{cases}
		0 &\text{ if }s=0,\\
		v &\text{ if }s\geq 1.
	\end{cases}
\end{equation}
Note that the primes dividing $Q_{m,B}$ are the same as those dividing $m_B$ defined in \eqref{eq:def-m-B}.

\begin{lem} \label{lem:radu}
Let $m,t,B,N\in \Z$ with $m,N>0$ and write $m=2^u3^vm'$ with $(m',6)=1$. Let $\alpha$, $\beta$, and $Q_{m,B}$ be as above. Suppose that $\lambda$ is an integer with $0\leq \lambda < m/Q_{m,B}$. Then there exists an integer $a=a_\lambda$ with $(a,6mN)=1$ and
\begin{equation} \label{eq:radu-lem}
	t+\lambda Q_{m,B} \equiv ta^2 + B\frac{a^2-1}{24} \pmod{m}.
\end{equation}
\end{lem}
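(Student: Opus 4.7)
The plan is to rewrite the desired congruence as a linear equation in $y := (a^2-1)/24$, solve it modulo $m$ by CRT together with a valuation case analysis, and then realize the resulting $y$ as $(a^2-1)/24$ for an appropriate $a$ coprime to $6mN$.

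Setting $y := (a^2-1)/24$ (an integer whenever $(a,6)=1$) and rearranging, congruence \eqref{eq:radu-lem} becomes
\[
	(24t+B)\,y \equiv \lambda\, Q_{m,B} \pmod{m}.
\]
Since $m' \mid Q_{m,B}$, imposing $a \equiv 1 \pmod{m'}$ forces $a^2-1\equiv 0\pmod{m'}$ and hence $y \equiv 0 \pmod{m'}$ (using $(m',24)=1$), which handles the congruence modulo $m'$. It then remains to solve $(24t+B)\,y \equiv \lambda\, Q_{m,B} \pmod{2^u 3^v}$, and for this it suffices to show $\gcd(24t+B,\, 2^u 3^v) \mid Q_{m,B}$.

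Verifying this divisibility is the main technical point. Since $v_2(24t) \geq 3$, one has $v_2(24t+B) = r$ when $r \leq 2$ and $v_2(24t+B) \geq 3$ when $r \geq 3$; comparing with the definition of $\alpha$ in \eqref{eq:def-alpha-beta}, in each case $\min(v_2(24t+B), u) \leq \alpha$. The analogous computation using $v_3(24t) \geq 1$ gives $\min(v_3(24t+B), v) \leq \beta$. Hence $\gcd(24t+B,\, 2^u 3^v) \mid 2^\alpha 3^\beta$, and a solution $y_0 \in \Z/(2^u 3^v)\Z$ exists.

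To realize $y_0$ as $(a^2-1)/24$, I parametrize $a = 1 + 12k$, so that $(a,6)=1$ automatically and $y = 6k^2 + k =: f(k)$. The map $f$ induces a bijection on $\Z/(2^u 3^v)\Z$: if $f(k_1) \equiv f(k_2)$, then $(k_1-k_2)(6(k_1+k_2)+1) \equiv 0$, and the second factor is coprime to $6$, hence a unit modulo $2^u 3^v$. So some $k_0$ yields $f(k_0) \equiv y_0 \pmod{2^u 3^v}$. Finally, CRT combines $a \equiv 1 + 12 k_0 \pmod{12 \cdot 2^u 3^v}$ with $a \equiv 1 \pmod{m'}$ (compatible since $\gcd(12 \cdot 2^u 3^v, m')=1$), determining $a$ modulo $12m$; the residues of $a$ at primes of $N$ not dividing $6m$ can be set freely to arrange $(a,N)=1$, producing an $a$ with $(a, 6mN) = 1$ satisfying \eqref{eq:radu-lem}.
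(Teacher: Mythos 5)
Your proof is correct, but it takes a genuinely different route from the paper. You linearize the problem: substituting $y=(a^2-1)/24$ turns \eqref{eq:radu-lem} into the linear congruence $(24t+B)\,y\equiv \lambda Q_{m,B} \pmod{m}$, which you split by CRT into the $m'$-part (killed by forcing $a\equiv 1\pmod{m'}$) and the $2^u3^v$-part, where solvability reduces to the valuation check $\gcd(24t+B,2^u3^v)\mid 2^\alpha 3^\beta$ — and this check is exactly where the case definition \eqref{eq:def-alpha-beta} of $\alpha$ and $\beta$ enters ($v_2(24t)\geq 3$, $v_3(24t)\geq 1$). You then pull the solution $y_0$ back to an actual $a$ via the parametrization $a=1+12k$, using that $k\mapsto 6k^2+k$ is a bijection on $\Z/2^u3^v\Z$ (your factorization $(k_1-k_2)(6(k_1+k_2)+1)$ with the second factor a unit is right), and a final CRT step secures $(a,6mN)=1$. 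The paper instead constructs $a$ directly by an explicit Hensel-type iteration: sequences $b_n$ and $c_n$ obtained by adding correction terms $2^{n+1-\alpha}3x_{n-1}QN'$ and multiplying by $2^{2+u-\alpha}3^n y_{n-1}QN'-\epsilon$, lifting the congruence one power of $2$ (then of $3$) at a time, with the unit condition ($2^{-\alpha}BN'b_{n-1}$ odd, resp. $\epsilon BN'c_{n-1}^2\equiv 1\pmod 3$) playing the role of your gcd condition. Your argument is arguably cleaner conceptually — it isolates the solvability obstruction and explains why $Q_{m,B}$ is the right modulus — at the cost of the extra step of inverting the quadratic map $k\mapsto 6k^2+k$; the paper's iteration is more hands-on and produces $a$ explicitly without any surjectivity lemma, which is convenient since coprimality to $6mN$ is built into the construction at every stage. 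Both proofs use the hypothesis on $\lambda$ only through the statement, not the argument, which is consistent with how the lemma is applied.
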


\begin{proof} Write $Q=Q_{m,B}$ for convenience.
We will construct integers $b_0, \ldots , b_{u-\alpha}$ with $(b_n,6mN)=1$ and
\[
	t + \lambda Q \equiv t b_n^2 +  B \frac{b_n^2-1}{24} \pmod{2^n Q},
\]
and integers $c_0, \ldots, c_{v-\beta}$ with $(c_n,6mN)=1$ and
\[
	t + \lambda Q \equiv t c_n^2 +  B \frac{c_n^2-1}{24} \pmod{2^{u-\alpha} 3^n Q}.
\]
Then \eqref{eq:radu-lem} holds with $a=c_{v-\beta}$ since $m=2^{u-\alpha}3^{v-\beta}Q$.

Let $N'$ denote the largest divisor of $N$ coprime to $6$. We begin by constructing the integers $b_n$. Let $b_0:=1$. If $\alpha=u$, we are done, so assume $\alpha<u$. Then either $\alpha=r=0$ or $\alpha=\min(r,u)=r$. In either case, $2^\alpha || B$. For $n\geq 1$, let $x_{n-1}$ be the unique integer satisfying
\[
	B\frac{b_{n-1}^2-1}{24} + t(b_{n-1}^2-1) - \lambda Q = x_{n-1}2^{n-1}Q
\]
and define $b_n := b_{n-1} + 2^{n+1-\alpha} 3 x_{n-1}QN'$.
Then since $2^\alpha | Q$, we have
\[
	B(b_n^2-1) \equiv B(b_{n-1}^2-1) + 2^{n+2-\alpha}3x_{n-1}QBN'b_{n-1} \pmod{24\cdot 2^nQ}.
\]
Therefore
\begin{align*}
	B\frac{b_n^2-1}{24}& + t(b_n^2-1) - \lambda Q \\
		&\equiv B\frac{b_{n-1}^2-1}{24} + t(b_{n-1}^2-1) - \lambda Q + 2^{n-1-\alpha}x_{n-1}QBN'b_{n-1} &\pmod{2^n Q} \\
		&\equiv x_{n-1} 2^{n-1} Q(1+2^{-\alpha}BN'b_{n-1}) &\pmod{2^n Q} \\
		&\equiv 0 &\pmod{2^n Q}
\end{align*}
since $2^{-\alpha}BN'b_{n-1}$ is odd.

To construct the integers $c_n$, set $c_0:=b_{u-\alpha}$. If $\beta=v$, we are done. If $\beta<v$ then $\beta=0$ and $3\nmid B$. In this case let $y_{n-1}$ be the unique integer satisfying
\[
	B \frac{c_{n-1}^2-1}{24} + t(c_{n-1}^2-1) - \lambda Q = y_{n-1} 2^{u-\alpha} 3^{n-1} Q
\]
and define $c_n:= c_{n-1}(2^{2+u-\alpha}3^ny_{n-1}QN'-\epsilon)$, where $\epsilon \in \{-1,1\}$ and $\epsilon \equiv BN' \pmod{3}$. Then
\begin{align*}
	B \frac{c_n^2-1}{24}& + t(c_n^2-1) - \lambda Q \\
		&\equiv B\frac{c_{n-1}^2-1}{24} + t(c_{n-1}^2-1) - \lambda Q - \epsilon2^{u-\alpha} 3^{n-1} y_{n-1} QBN'c_{n-1}^2 &\pmod{2^{u-\alpha}3^nQ} \\
		&\equiv 2^{u-\alpha}3^{n-1}y_{n-1}Q(1-\epsilon BN'c_{n-1}^2) &\pmod{2^{u-\alpha}3^nQ} \\
		&\equiv 0 &\pmod{2^{u-\alpha}3^nQ}
\end{align*}
since $\epsilon BN'c_{n-1}^2 \equiv \epsilon^2 \equiv 1 \pmod{3}$.
\end{proof}

We require two theorems of Deligne and Rapoport \cite{DR}, which relate the expansion of a modular form at $\infty$ with its expansions at other cusps. For a prime ideal $\pi$ and a modular form $f$ with $\pi$-integral coefficients, let $v_{\pi}(f)$ denote the $\pi$-adic valuation of $f$. Let $\zeta_m := \exp(2\pi i/m)$.

\begin{thm}\cite[VII, Cor 3.12]{DR} \label{thm:deligne-mod-p}
Let $f \in M_{k}(\Gamma(N)) \cap \Z[\zeta_{N}]\llbracket q \rrbracket$, $p$ a prime, and $\gamma \in \Gamma_{0}(p^{m})$, where $p^{m} || N$. Let $\pi$ be a prime of $\Z[\zeta_{N}]$ lying above $p$. Then
\[
	v_{\pi} (f) = v_{\pi} (f|_{k}\gamma).
\]
\end{thm}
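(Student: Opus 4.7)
The plan is to pass to the moduli-theoretic interpretation of $M_k(\Gamma(N))$ over $\Z[\zeta_N]$ developed by Deligne--Rapoport, under which a weight-$k$ modular form is a functorial rule on pairs $(E,\alpha)$ consisting of a generalized elliptic curve $E$ and a full level-$N$ structure $\alpha$ of Weil pairing $\zeta_N$. The $q$-expansion of $f$ at the cusp $\infty$ is the value on the Tate curve $T_q$ over $\Z[\zeta_N](\!(q^{1/N})\!)$ with its canonical level structure $\alpha_{\rm can}\colon (1,0)\mapsto\zeta_N,\ (0,1)\mapsto q^{1/N}$, and $f|_k\gamma$ is, up to the $\pi$-adic unit $(cz+d)^k$, the value on $(T_q,\alpha_{\rm can}\circ\gamma)$.

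Write $N=p^mN'$ with $(p,N')=1$ and decompose $\gamma$ via CRT as $(\gamma_p,\gamma_{N'})\in\SL_2(\Z/p^m)\times\SL_2(\Z/N')$. The condition $\gamma\in\Gamma_0(p^m)$ forces $\gamma_p$ to be upper triangular. Geometrically this means $\gamma$ fixes the image of $\infty$ under the forgetful map $X(N)\to X(N')$, so $\infty$ and $\gamma\cdot\infty$ lie over a common cusp of $X(N')$. The key DR input is that this map is purely inseparable at the cusps in characteristic $p$: in the integral model, the fiber over any cusp of $X(N')$ reduces modulo $p$ to a single geometric point. Hence the formal neighborhoods of $\infty$ and $\gamma\cdot\infty$ in $X(N)_{\Z[\zeta_N]_\pi}$ admit a canonical integral isomorphism.

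Tracing this isomorphism through the Tate-curve uniformization, I would realize the action of $\gamma$ on the $q$-expansion as the composition of (i) a cyclotomic Galois conjugation $\zeta_N\mapsto\zeta_N^{a'}$, (ii) a substitution $q^{1/N}\mapsto\zeta_N^{b'}q^{d'/N}$ with $(d',N)=1$, and (iii) multiplication by a $\pi$-adic unit absorbing the automorphy factor $(cz+d)^k$. For the $p^m$-part this is transparent: $(1-\zeta_{p^m})$ is the unique prime of $\Z[\zeta_{p^m}]$ above $p$ and is stabilized by the full cyclotomic Galois group, while the accompanying substitution in $q^{1/p^m}$ only rescales coefficients by $p^m$-th roots of unity. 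Each of the three operations then preserves $v_\pi$ term by term, from which the equality of valuations is immediate.

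The main obstacle is the $N'$-direction of step (i): Galois conjugation $\zeta_{N'}\mapsto\zeta_{N'}^{a'}$ generically permutes the primes of $\Z[\zeta_N]$ above $p$, so the bare Galois action need not fix $\pi$. The resolution, which is the essential content of DR's corollary, is that the accompanying substitution $q^{1/N'}\mapsto\zeta_{N'}^{b'}q^{d'/N'}$ is paired with the cyclotomic action in exactly the way forced by the integrality of the moduli problem, and the composite induces an automorphism of $\Z[\zeta_N]_\pi(\!(q^{1/N})\!)$ defined integrally over $\Z[\zeta_N]_\pi$. This integrality, which comes from the \'etaleness of the $N'$-torsion of $T_q$ over $\Z[\zeta_N](\!(q^{1/N})\!)$ and the canonical nature of the DR compactification at the cusps, ensures that $\pi$ is fixed and hence that $v_\pi$ is preserved, completing the argument.
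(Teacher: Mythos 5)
There is no proof of this statement in the paper to compare against: it is quoted verbatim from Deligne--Rapoport \cite[VII, Cor.\ 3.12]{DR}. The actual argument there is geometric, and it is useful to recall its shape because your sketch deviates from it at the decisive point: one works on the Deligne--Rapoport compactification of the moduli scheme over $\Z[\zeta_N]$, shows that the two cusps $\infty$ and $\gamma\infty$ reduce modulo $\pi$ into the \emph{same geometrically irreducible component} of the special fiber --- this is exactly where $\gamma\in\Gamma_0(p^m)$ with $p^m||N$ enters, since the component through a cusp is determined by the $p$-part of the level structure, which the Borel at $p$ preserves --- and then applies the $q$-expansion principle: if the expansion at one cusp on that component is divisible by $\pi^v$, the section is divisible by $\pi^v$ along the whole component, hence at the other cusp; symmetry gives equality of valuations.

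Your proposal has two genuine gaps. First, the stated geometric input is wrong: the fiber of $X(N)\to X(N')$ over a cusp of $X(N')$ does \emph{not} reduce mod $p$ to a single point (already for $N=p$ the cusps of $X(p)$ are distributed over the $p+1$ components of the special fiber), and the claim that $\gamma\in\Gamma_0(p^m)$ fixes the image of $\infty$ in $X(N')$ is backwards --- the map to $X(N')$ forgets precisely the $p$-part where $\gamma$ is constrained and remembers the prime-to-$p$ part where it is not; e.g.\ for $N=4p^m$ and $\gamma=\pmatrix{1}{0}{p^m}{1}$ the cusps $\infty$ and $\gamma\infty$ lie over different cusps of $X(4)$. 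Consequently the ``canonical integral isomorphism of formal neighborhoods of $\infty$ and $\gamma\infty$'' does not exist and cannot be the mechanism; if such an isomorphism existed for all $\gamma$, the valuation would be $\SL_2(\Z)$-invariant, which is false (compare $\Delta(pz)$ at $\infty$ and at $0$), so any correct argument must isolate where $\Gamma_0(p^m)$ is used, and yours does not. Second, the proposed coefficientwise realization of $f\mapsto f|_k\gamma$ as a Galois twist plus a substitution $q^{1/N}\mapsto\zeta_N^{b'}q^{d'/N}$ plus a unit is valid only for $\gamma$ upper triangular mod $N$ (the torus/Borel orbit of $\infty$, your ``transparent'' case $N'=1$); for general $\gamma\in\Gamma_0(p^m)$ the expansion at $\gamma\infty$ is not determined coefficientwise by the expansion at $\infty$ even over $\C$, so no formula of this kind can exist, and your final paragraph --- which is where the whole difficulty lives --- resolves it by asserting an integral compatibility at the two cusps that is precisely the content of the corollary, i.e.\ the argument is circular there. (Minor points: in the algebraic formulation there is no factor $(cz+d)^k$ to absorb, and the $\pi$-integrality of $f|_k\gamma$, whose coefficients a priori lie only in $\Z[1/N,\zeta_N]$ by Theorem~\ref{thm:deligne-coeff}, is itself part of what must be proved.) To repair the proof you would need the two genuinely geometric inputs above: the component analysis of the special fiber (Deligne--Rapoport, Igusa irreducibility) and the $q$-expansion principle.
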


\begin{thm}\cite[VII, Cor 3.13]{DR} \label{thm:deligne-coeff}
Let $f\in M_{k}(\Gamma(N)) \cap \Z[1/N,\zeta_{N}]\llbracket q \rrbracket$, $\gamma \in \SL_{2}(\Z)$. Then $f|_{k} \gamma \in \Z[1/N,\zeta_{N}]\llbracket q \rrbracket$.
\end{thm}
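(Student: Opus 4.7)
The plan is to use the moduli-theoretic interpretation of modular forms on $\Gamma(N)$. The modular curve $Y(N)$ parametrizing elliptic curves with full level-$N$ structure has a smooth model over $\Z[1/N,\zeta_N]$, and its standard compactification $\bar{M}(N)$ has cusps defined over this same ring. Sections of $\omega^{\otimes k}$ on $\bar{M}(N)$ correspond to holomorphic modular forms of weight $k$ on $\Gamma(N)$, and the $q$-expansion principle identifies those defined over $\Z[1/N,\zeta_N]$ as precisely the ones whose expansion at $\infty$ already lies in $\Z[1/N,\zeta_N]\llbracket q\rrbracket$. Thus the hypothesis places $f$ in $H^0\bigl(\bar{M}(N)_{\Z[1/N,\zeta_N]},\,\omega^{\otimes k}\bigr)$.

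Next, for $\gamma\in\SL_2(\Z)$, the Fourier expansion of $f|_k\gamma$ at $\infty$ is, up to a change of variable on $q$, the expansion of $f$ at the cusp $\gamma\cdot\infty$. Algebraically, this expansion is obtained by evaluating $f$ at the Tate curve equipped with the level-$N$ structure transported from the canonical one at $\infty$ via the image of $\gamma$ in $\SL_2(\Z/N\Z)$. Since the cusps and the $\SL_2(\Z/N\Z)$-action on them are defined over $\Z[1/N,\zeta_N]$, this evaluation yields a power series whose coefficients lie in $\Z[1/N,\zeta_N]$, which is exactly the desired conclusion.

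The step I expect to be most delicate is the algebro-geometric input itself: one needs a smooth compactification of the moduli problem over $\Z[1/N,\zeta_N]$ (rather than just over $\C$), a description of the cusps as sections with Tate-curve formal neighborhoods, and a proof of the $q$-expansion principle in this integral setting---all of which constitute the substantive content of \cite{DR}. An elementary approach via the generators $T$ and $S$ of $\SL_2(\Z)$ is tempting---$T$ merely multiplies the $n$th Fourier coefficient by $\zeta_N^n$, so integrality is preserved---but $f|_k S$ admits no direct formula in terms of the Fourier coefficients of $f$ at $\infty$, so integrality at the cusp $0$ (and hence, by iterating, at all cusps) cannot be verified without genuinely algebraic input. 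This is why the result is attributed to Deligne and Rapoport rather than reproved directly.
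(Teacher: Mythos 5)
The paper gives no proof of this statement at all---it is quoted directly as \cite[VII, Cor 3.13]{DR}---and your outline (moduli of elliptic curves with full level-$N$ structure over $\Z[1/N,\zeta_N]$, cusps via the Tate curve, the $q$-expansion principle, and the $\SL_2(\Z/N\Z)$-action transporting the level structure) is precisely the argument underlying the cited result. Your sketch is correct, and you rightly identify that the integral compactification and $q$-expansion principle are the substantive content that must be taken from \cite{DR} rather than reproved, so there is nothing to add.
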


\section{Proof of Theorem~\ref{thm:f-q}}

Define
\begin{equation} \label{eq:def-M-m-t}
	M_{m,t}(z) :=  \frac{1}{m} \sum_{\lambda=0}^{m-1} \zeta_{m}^{-\lambda(t-1/24)} M \pfrac{z+\lambda}{m}.
\end{equation}
In order to work with the function $M_{m,t}(z)$, we need to isolate progressions on which the non-holomorphic part of $M_{m,t}(z)$ vanishes. We call a progression $t\pmod{m}$ \emph{good} if there exists a prime $p|m$ with $(\frac{1-24t}{p}) = -1$. By \cite{AK}, if $t\pmod{m}$ is good, then $M_{m,t}$ is a weakly holomorphic modular form. Thus Corollary \ref{cor:f-q} follows immediately from Theorem \ref{thm:f-q} and \eqref{eq:M-m-t-exp} below.

Suppose $a(mn+t) \equiv 0 \pmod{\ell}$ for some progression $t \pmod{m}$ that is not good. Choose a prime $p\geq 5$ with $p\nmid \ell m$ and a quadratic non-residue $x \pmod{p}$, and choose $T$ satisfying
\begin{align*}
	T &\equiv t \pmod{m}, \\
	T &\equiv \frac{1-x}{24} \pmod{p}.
\end{align*}
Then the subprogression $T \pmod{mp}$ is good. Note that the statements $\ell | mp$ and $\legendre{24T-1}{\ell} \neq \legendre{-1}{\ell}$ together imply that $\ell | m$ and $\legendre{24t-1}{\ell} \neq \legendre{-1}{\ell}$. Therefore to prove Theorem~\ref{thm:f-q} we may assume that $t \pmod{m}$ is a good progression. A calculation shows that if $t\pmod{m}$ is good, then
\begin{equation} \label{eq:M-m-t-exp}
	M_{m,t}(z) = q^{\frac{t-1/24}{m}} \sum a(mn+t)q^{n}.
\end{equation}

We will need a proposition from \cite{AK} which describes the transformation of $M_{m,t}$ under the action of $\Gamma_{0}(N_{m})$, where
\[
	N_{m} :=
	\begin{cases}
		2m & \text{if $(m,6)=1$}, \\
		8m & \text{if $(m,6)=2$}, \\
		6m & \text{if $(m,6)=3$}, \\
		24m & \text{if $(m,6)=6$}. \\
	\end{cases}
\]
If $A = \pmatrix{a}{b}{c}{d} \in \Gamma_{0}(N_{m})$ has $3\nmid a$ then we define $t_{A}$ to be any integer satisfying
\begin{equation} \label{def:t-A}
	t_{A} \equiv ta^{2} + \frac{1-a^{2}}{24} \pmod{m}.
\end{equation}

\begin{prop} \label{prop:M-m-t-M-m-t-A}
	For every $A = \pmatrix{a}{b}{c}{d} \in \Gamma_{0}(N_{m})$ with $3\nmid a$ we have
	\[
		M_{m,t}^{24m} \big|_{12m} A = M_{m,t_{A}}^{24m}.
	\]
\end{prop}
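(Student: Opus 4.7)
The plan is to expand $M_{m,t}(Az)$ using the definition \eqref{eq:def-M-m-t} and the transformation law \eqref{eq:M-transform} for $M$, then reindex the resulting sum to recognize $M_{m,t_A}(z)$. For each $\lambda \in \{0,\ldots,m-1\}$, I rewrite
\[
	\frac{Az+\lambda}{m} = A'_\lambda \cdot \frac{z+\mu_\lambda}{m}, \qquad A'_\lambda := \pMatrix{a+\lambda c}{b'_\lambda}{mc}{d - c\mu_\lambda},
\]
where $\mu_\lambda \in \{0,\ldots,m-1\}$ is chosen so that $b'_\lambda = (b+\lambda d - (a+\lambda c)\mu_\lambda)/m$ is an integer. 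A short check shows this forces $\mu_\lambda \equiv bd+\lambda d^2 \pmod{m}$, and since $ad \equiv 1 \pmod m$ makes $d$ a unit modulo $m$, the map $\lambda \mapsto \mu_\lambda$ is a bijection of $\{0,\ldots,m-1\}$. A determinant computation gives $A'_\lambda \in \SL_2(\Z)$, and since $2 \mid N_m \mid c$ we have $A'_\lambda \in \Gamma_0(2)$, so \eqref{eq:M-transform} applies.

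Inserting the transformation law into the defining sum and reindexing over $\mu := \mu_\lambda$ yields
\[
	M_{m,t}(Az) = \frac{(cz+d)^{1/2}}{m} \sum_{\mu=0}^{m-1} \kappa_\lambda \cdot \zeta_m^{-\mu(t_A-1/24)} \, M\!\left(\frac{z+\mu}{m}\right),
\]
where
\[
	\kappa_\lambda := w(A'_\lambda)\, \zeta_m^{-\lambda(t-1/24) + \mu_\lambda(t_A-1/24)}.
\]
It then suffices to show that $\kappa_\lambda$ is a constant $\kappa$ independent of $\lambda$ and that $\kappa^{24m} = 1$: the inner sum then collapses to $\kappa\cdot M_{m,t_A}(z)$, and raising both sides to the $24m$-th power yields the proposition.

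The verification of $\lambda$-independence is the heart of the argument. Expanding $w(A'_\lambda)$ from its defining formula produces a sign $(-1)^{(mc+1+(a+\lambda c)(d-c\mu_\lambda))/2}$, three exponential phases involving $(a+\lambda c)$, $(a+\lambda c)+(d-c\mu_\lambda)$, and $mc(d-c\mu_\lambda)$, and a Dedekind sum $s(-d+c\mu_\lambda,\, mc)$. Lemma~\ref{lem:dedekind-sum} converts the Dedekind-sum contribution into a linear-in-$\mu_\lambda$ term $\mu_\lambda(1-a^2)/(12m)$ modulo even integers. The defining congruence $t_A \equiv ta^2 + (1-a^2)/24 \pmod{m}$ is then exactly what makes the $\zeta_m^{-\lambda(t-1/24)}$ and $\zeta_m^{\mu_\lambda(t_A-1/24)}$ phases cancel this Dedekind contribution together with the other linear-in-$\lambda$ terms arising from the explicit formula for $w$.

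The main obstacle is the careful bookkeeping of these phases modulo $24m$, the exact order of root of unity that the $24m$-th power can absorb. Two additional subtleties need attention. First, Lemma~\ref{lem:dedekind-sum} assumes $(m,6)=1$; when $2$ or $3$ divides $m$ one must apply the lemma with the coprime-to-$6$ part of $m$ and handle the residual $2$- and $3$-adic phase contributions separately, which accounts for the case split in the definition of $N_m$. Second, reducing $bd+\lambda d^2$ into the range $\{0,\ldots,m-1\}$ introduces multiples of $m$ that contribute $24$-th-root-of-unity corrections to several phases; these are harmless since they vanish under the $24m$-th power.
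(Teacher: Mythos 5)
The paper offers no proof of this proposition at all: it is quoted verbatim from \cite{AK} ("We will need a proposition from \cite{AK}\dots"), so the only internal benchmark is the analogous computation the paper does carry out, namely the proof of Proposition~\ref{prop:m-q-l-exp}. Your setup matches that machinery and is correct as far as it goes: the identity $\pmatrix{1}{\lambda}{0}{m}A=A'_\lambda\pmatrix{1}{\mu_\lambda}{0}{m}$ with $\mu_\lambda\equiv bd+\lambda d^2\pmod m$ is right, $\lambda\mapsto\mu_\lambda$ is a bijection since $ad\equiv 1\pmod m$, the automorphy factor comes out uniformly as $(cz+d)^{1/2}$ because $mc\cdot\frac{z+\mu_\lambda}{m}+(d-c\mu_\lambda)=cz+d$, and the proposition is indeed equivalent to the constancy in $\lambda$ of $\kappa_\lambda=w(A'_\lambda)\,\zeta_m^{-\lambda(t-1/24)+\mu_\lambda(t_A-1/24)}$; once constancy holds, $\kappa^{24m}=1$ is automatic, since $w$ is a $24$th root of unity and the $\zeta_m$-phase is a $24m$th root of unity.

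The gap is that this constancy, which is the entire content of the proposition, is asserted rather than verified. The parts that do come out cleanly are worth writing down: writing $A'_\lambda=\pmatrix{a'}{b'}{mc}{d'}$, the $\pm\frac{\lambda-\mu_\lambda}{24m}$ contributions from $-\frac{a'+d'}{24mc}$ and from the $\zeta_m$-phases cancel, and modulo $1$ the remaining piece $\frac{-\lambda t+\mu_\lambda\left(t_A-\frac{1-a^2}{24}\right)}{m}\equiv\frac{bdta^2}{m}$ by \eqref{def:t-A} and $ad\equiv1\pmod m$ — this is where $3\nmid a$ is needed for $\frac{1-a^2}{24}\in\Z$. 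But the residual $2$- and $3$-adic pieces — the sign $(-1)^{(mc+1+a'd')/2}$, the terms $\frac{a'}{4}$ and $\frac{3mcd'}{8}$ in the exponent of $w(A'_\lambda)$, and the failure of the hypothesis $(m,6)=1$ in Lemma~\ref{lem:dedekind-sum} when $2$ or $3$ divides $m$ — are precisely where the case-by-case definition of $N_m$ must be used, and none of that is carried out; "handle the residual contributions separately" is the whole difficulty. Moreover, one of your dismissals is wrong as stated: the corrections from reducing $bd+\lambda d^2$ into $\{0,\dots,m-1\}$ are $\lambda$-dependent $24$th roots of unity, and such factors do \emph{not} "vanish under the $24m$-th power," because the power is applied to the sum, not termwise — only a factor common to all terms can be extracted. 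What actually makes the choice of representative irrelevant is that replacing $\mu_\lambda$ by $\mu_\lambda+km$ replaces $A'_\lambda$ by $A'_\lambda\pmatrix{1}{-k}{0}{1}$, and the resulting change in $w(A'_\lambda)$ exactly compensates the change in $\zeta_m^{\mu_\lambda(t_A-1/24)}$, so $\kappa_\lambda$ itself is unchanged. Finally, \eqref{eq:M-transform} is stated only for positive lower-left entry, so the cases $c\le 0$ (e.g.\ $A$ upper triangular, or $c<0$ after replacing $A$ by $-A$) need a brief separate remark.
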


The matrices in $\Gamma_{0}(N_{m})$ with $3\nmid a$ generate $\Gamma_{1}(N_{m})$. Therefore we have
\begin{prop} \label{prop:M-m-t-modular}
	Suppose that $t \pmod m$ is good. Then there exists $s \in \N$ such that
	\begin{equation} \label{eq:delta-M-m-t-modular}
		\Delta^{s} M_{m,t}^{24m} \in M_{12(m+s)}(\Gamma_{1}(N_{m})).
	\end{equation}
\end{prop}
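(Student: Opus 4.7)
The plan is to show that $M_{m,t}^{24m}$ is a weakly holomorphic modular form of weight $12m$ on $\Gamma_{1}(N_m)$, and then clear the poles at cusps with a sufficiently high power of $\Delta$.

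By the remark preceding the proposition, $\Gamma_{1}(N_m)$ is generated by those matrices $A=\pmatrix{a}{b}{c}{d}\in\Gamma_{0}(N_m)$ with $3\nmid a$. Such a matrix automatically has $2\nmid a$ because $2\mid N_m$ in every case, so $(a,6)=1$ and consequently $a^2\equiv 1\pmod{24}$. If in addition $A\in\Gamma_{1}(N_m)$, then $a\equiv 1\pmod{N_m}$, and a short case-by-case check of the four definitions of $N_m$ shows that $24m\mid a^2-1$. Therefore $(1-a^2)/24\equiv 0\pmod m$, so by \eqref{def:t-A},
\[
t_A\equiv ta^2+\frac{1-a^2}{24}\equiv t\pmod m,
\]
and Proposition~\ref{prop:M-m-t-M-m-t-A} gives $M_{m,t}^{24m}\big|_{12m}A=M_{m,t_A}^{24m}=M_{m,t}^{24m}$. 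The generation statement then propagates this identity to all of $\Gamma_{1}(N_m)$.

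Since $t\pmod m$ is good, $M_{m,t}$ is weakly holomorphic by \cite{AK}, so $M_{m,t}^{24m}$ is holomorphic on $\H$ with at worst poles at the cusps; together with the $\Gamma_{1}(N_m)$-invariance just established, it is a weakly holomorphic modular form of weight $12m$ on $\Gamma_{1}(N_m)$. Because $\Delta$ is nowhere vanishing on $\H$ and vanishes to positive order at every cusp of $\Gamma_{1}(N_m)$, the zeros of $\Delta^s$ cancel all poles of $M_{m,t}^{24m}$ at cusps for $s$ sufficiently large, yielding $\Delta^s M_{m,t}^{24m}\in M_{12(m+s)}(\Gamma_{1}(N_m))$ as required.

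The main obstacle is the arithmetic claim $24m\mid a^2-1$, which is really the reason for the specific definition of $N_m$. When $(m,6)=1$, one has $\gcd(m,24)=1$ and $N_m=2m$, so CRT combined with $24\mid a^2-1$ (from $(a,6)=1$) and $m\mid a-1$ immediately gives the claim. The three remaining cases $(m,6)\in\{2,3,6\}$ proceed by the same pattern, with the extra factors of $2$ and $3$ in $N_m$ supplying exactly the additional divisibility that $a^2-1$ does not pick up automatically from $(a,6)=1$.
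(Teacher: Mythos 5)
Your proof is correct and is essentially the argument the paper intends: it fills in the one-line deduction from Proposition~\ref{prop:M-m-t-M-m-t-A} and the generation remark by checking that $a\equiv 1\pmod{N_m}$ together with $(a,6)=1$ gives $24m\mid a^2-1$, hence $t_A\equiv t\pmod m$ and slash-invariance on generators, after which goodness supplies weak holomorphy and a high power of $\Delta$ clears the poles at the cusps. The only point worth making explicit is that the generation remark should be read as saying $\Gamma_1(N_m)$ is generated by its \emph{own} elements with $3\nmid a$ (true, since $3\mid a$ forces $3\nmid c$, so left-multiplying by a suitable power of $\left(\begin{smallmatrix}1&1\\0&1\end{smallmatrix}\right)\in\Gamma_1(N_m)$ fixes the obstruction), which is exactly what your propagation step uses.
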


A useful consequence of these propositions is the following lemma.

\begin{lem} \label{lem:M-t-t-A}
	Suppose that $t\pmod{m}$ is good. Let $A=\pmatrix{a}{b}{c}{d} \in \Gamma_{0}(N_{m})$ with $3\nmid a$ and let $t_A$ be as in \eqref{def:t-A}. If
		$M_{m,t} \equiv 0 \pmod{\ell}$
	then
		$M_{m,t_A} \equiv 0 \pmod{\ell}$.
\end{lem}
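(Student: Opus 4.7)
The plan is to combine Propositions~\ref{prop:M-m-t-M-m-t-A} and~\ref{prop:M-m-t-modular} with the integrality result of Deligne and Rapoport (Theorem~\ref{thm:deligne-mod-p}) to transfer the congruence under the slash action of $A$. First I would choose $s$ by Proposition~\ref{prop:M-m-t-modular} so that
\[
	F := \Delta^s M_{m,t}^{24m} \in M_{12(m+s)}(\Gamma_1(N_m)) \subseteq M_{12(m+s)}(\Gamma(N_m)).
\]
By \eqref{eq:M-m-t-exp}, $M_{m,t}$ has integer $q$-expansion, so $F$ does as well, and the hypothesis $M_{m,t}\equiv 0\pmod{\ell}$ becomes $F\equiv 0\pmod{\ell}$ in $\Z\llbracket q \rrbracket$.

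Next I would compute the slash action of $A$ on both factors of $F$. Since $\Delta \in S_{12}(\SL_2(\Z))$, we have $\Delta^s|_{12s} A = \Delta^s$, while Proposition~\ref{prop:M-m-t-M-m-t-A} gives $M_{m,t}^{24m}|_{12m} A = M_{m,t_A}^{24m}$. Multiplicativity of the slash operator then yields
\[
	F\big|_{12(m+s)} A = \Delta^s M_{m,t_A}^{24m}.
\]
Write $\ell^j \| N_m$. Because $A\in \Gamma_0(N_m) \subseteq \Gamma_0(\ell^j)$, Theorem~\ref{thm:deligne-mod-p} applies with $p=\ell$ and gives $v_\pi(F|_{12(m+s)} A) = v_\pi(F)$ for every prime $\pi$ of $\Z[\zeta_{N_m}]$ lying above $\ell$. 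From $F\equiv 0\pmod{\ell}$ we get $v_\pi(F) \geq v_\pi(\ell)$ for every such $\pi$, and intersecting these bounds over $\pi \mid \ell$, together with the fact that $F|_{12(m+s)} A$ has integer coefficients (by Proposition~\ref{prop:M-m-t-M-m-t-A}), forces $\Delta^s M_{m,t_A}^{24m} \equiv 0 \pmod{\ell}$.

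To conclude I would divide by $\Delta^s$: since $\Delta$ has leading coefficient $1$, the series $\Delta^s$ is a unit in $\mathbb{F}_\ell\llbracket q \rrbracket$, so $M_{m,t_A}^{24m}\equiv 0\pmod{\ell}$; integrality of the formal power series ring over $\mathbb{F}_\ell$ then gives $M_{m,t_A}\equiv 0\pmod{\ell}$. The main technical point is verifying the hypothesis of Theorem~\ref{thm:deligne-mod-p}—namely that $A$ lies in $\Gamma_0(\ell^j)$ for the exact power $\ell^j\|N_m$—but this is immediate from $A\in \Gamma_0(N_m)$, so the remaining work is only routine bookkeeping.
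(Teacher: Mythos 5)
Your proof is correct and is essentially the paper's own argument: choose $s$ via Proposition~\ref{prop:M-m-t-modular}, use Proposition~\ref{prop:M-m-t-M-m-t-A} to get $\Delta^{s}M_{m,t}^{24m}\big|_{12(m+s)}A=\Delta^{s}M_{m,t_A}^{24m}$, and apply Theorem~\ref{thm:deligne-mod-p} to transfer the congruence. The extra steps you spell out (triviality of the slash on $\Delta^{s}$, dividing by the unit $\Delta^{s}$ and extracting the $24m$-th root in the integral domain $\mathbb{F}_\ell\llbracket q\rrbracket$) are just the bookkeeping the paper leaves implicit.
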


\begin{proof}
Let $s$ be as in \eqref{eq:delta-M-m-t-modular}. Then by Proposition~\ref{prop:M-m-t-M-m-t-A} we have
	\[
		\Delta^{s} M_{m,t}^{24m} \big|_{12(m+s)} A = \Delta^{s }M_{m,t_{A}}^{24m}.
	\]
Applying Theorem~\ref{thm:deligne-mod-p} we conclude that
\[
	M_{m,t_A} \equiv 0 \pmod{\ell}. \qedhere
\]
\end{proof}

The following proposition proves the assertion in Theorem~\ref{thm:f-q} that $\ell | m$ and reduces us to the case $m=Q\ell$, where $(Q,6\ell)=1$.

\begin{prop} \label{prop:M-Q-ell}
	Suppose that $t\pmod{m}$ is good and write $m=2^u 3^v \ell^j Q$ with $(Q,6\ell)=1$. If $M_{m,t} \equiv 0 \pmod{\ell}$ then $j > 0$. If additionally $\ell \nmid (24t-1)$ then $M_{Q\ell,t}\equiv 0 \pmod{\ell}$.
\end{prop}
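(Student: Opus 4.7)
The plan is to adapt Radu's reduction strategy. I will repeatedly apply Lemma~\ref{lem:M-t-t-A}, with matrices supplied by Lemma~\ref{lem:radu}, to propagate the vanishing modulo $\ell$ across many progressions $t'\pmod m$, and then combine these vanishings to collapse the modulus down to $\ell^j Q$ (and, under the extra hypothesis, further to $\ell Q$). For the mock theta function $f$ the holomorphic shift is $q^{-1/24}$, so I invoke Lemma~\ref{lem:radu} with $B=-1$, placing us in the case $\alpha=\beta=0$ where $Q_{m,B}=\ell^j Q$.

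First I would eliminate the $2$- and $3$-parts of $m$. For each $\lambda\in\{0,1,\ldots,2^u 3^v-1\}$, Lemma~\ref{lem:radu} produces $a_\lambda$ coprime to $6mN_m$ satisfying
\[
t+\lambda\ell^j Q\equiv t a_\lambda^2-\frac{a_\lambda^2-1}{24}\pmod m.
\]
Completing each $a_\lambda$ to a matrix $A_\lambda\in\Gamma_0(N_m)$ (with $3\nmid a_\lambda$ automatic from $(a_\lambda,6)=1$) and invoking Lemma~\ref{lem:M-t-t-A} yields $M_{m,\,t+\lambda\ell^j Q}\equiv 0\pmod\ell$. As $n\ge 0$ and $\lambda$ vary, the integers $mn+t+\lambda\ell^j Q=\ell^j Q(2^u 3^v n+\lambda)+t$ range bijectively over $\{\ell^j Q N+t:N\ge 0\}$, so combining these vanishings establishes $M_{\ell^j Q,t}\equiv 0\pmod\ell$.

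For the first claim I would assume $j=0$ for contradiction, obtaining $M_{Q,t}\equiv 0\pmod\ell$ with $(Q,6\ell)=1$. By the preliminary reduction at the start of Section 3 (performed with an auxiliary prime $\ge 5$ distinct from $\ell$), the goodness witness of $t\pmod m$ can be arranged to lie in $Q$, so $t\pmod Q$ is good and Proposition~\ref{prop:M-m-t-modular} supplies $s$ with $\Delta^s M_{Q,t}^{24Q}\in M_{12(Q+s)}(\Gamma_1(N_Q))$, an integer-weight holomorphic modular form with $\ell$-integral coefficients that vanishes modulo $\ell$ at $\infty$. Since $\ell\nmid N_Q$, Theorem~\ref{thm:deligne-mod-p} (with $p=\ell$ and $p^m=1$) propagates this vanishing to every cusp. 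Exhibiting a single cusp at which the expansion has a leading Fourier coefficient that is a unit modulo $\ell$, using the explicit multiplier $w(A)$ from~\eqref{eq:M-transform} together with $a(0)=1$, then delivers the contradiction; this cusp analysis is the main technical obstacle, since one must escape the $\Gamma_0(N_m)$-orbit of $\infty$ (on which Lemma~\ref{lem:M-t-t-A} forces the expansion to vanish) and explicitly track the root-of-unity factor.

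For the second claim, assume $\ell\nmid 24t-1$ and $j\ge 1$. Writing every $N\ge 0$ as $N=\ell^{j-1}N'+r$ with $0\le r<\ell^{j-1}$ reduces the goal to $M_{\ell^j Q,\,t+r\ell Q}\equiv 0\pmod\ell$ for every $r$; the $r=0$ case is already in hand. For $r\ge 1$, I apply Lemma~\ref{lem:M-t-t-A} at level $\ell^j Q$ (whose goodness is inherited from the witness in $Q$). The required $a_r$, coprime to $6\ell Q N_{\ell^j Q}$, must satisfy
\[
(24t-1)(a_r^2-1)\equiv 24r\ell Q\pmod{24\ell^j Q};
\]
since $\ell\nmid 24t-1$, reducing modulo $\ell$ forces $a_r^2\equiv 1\pmod\ell$, which Hensel-lifts to a solution modulo $\ell^j$, and CRT with $a_r\equiv 1\pmod Q$ together with the appropriate unit mod~$24$ completes the construction.
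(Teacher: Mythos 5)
Your reduction to $M_{Q\ell^j,t}$ via Lemma~\ref{lem:radu} (with $B=-1$, so $Q_{m,B}=\ell^jQ$) and Lemma~\ref{lem:M-t-t-A}, and your further reduction to $M_{Q\ell,t}$ when $\ell\nmid(24t-1)$, are correct and essentially the paper's argument; in the latter step you even re-derive the needed congruence $a_r^2(24t-1)\equiv 24(t+Q\ell r)-1\pmod{Q\ell^j}$ by Hensel's lemma and CRT, where the paper simply cites \cite[Lemma 4.6]{Radu:ao}. Both are fine.

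The genuine gap is in the proof that $j>0$, and it is exactly the step you flag as ``the main technical obstacle'' and then do not carry out. Knowing that $\Delta^{s}M_{Q,t}^{24Q}$ vanishes modulo $\ell$ at every cusp (your use of Theorem~\ref{thm:deligne-mod-p} with $p^m=1$, or equivalently the paper's use of Theorem~\ref{thm:deligne-coeff} applied to $\ell^{-24Q}\Delta^{s}M_{Q,t}^{24Q}$) produces no contradiction by itself: one must exhibit a specific cusp, not in the orbit of $\infty$, at which the leading Fourier coefficient is an $\ell$-adic unit. This is not a routine application of the multiplier $w(A)$ together with $a(0)=1$, because $M_{Q,t}$ is an average $\frac1Q\sum_\lambda \zeta_Q^{-\lambda(t-1/24)}M\bigl(\frac{z+\lambda}{Q}\bigr)$; at a cusp such as $1/2$ each term transforms with a different root of unity and a different width, and one must show the resulting exponential sum does not degenerate (its leading coefficient could a priori be divisible by $\ell$, since Theorem~\ref{thm:deligne-coeff} only guarantees it lies in $\Z[1/2Q,\zeta_{2Q}]$). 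The paper supplies precisely this input by citing \cite[Proposition 9]{AK}: the expansion of $\Delta^{s}M_{Q,t}^{24Q}$ at $1/2$, computed via $\big|_{12(Q+s)}\pmatrix{1}{0}{2}{1}$, begins $Q^{-12Q}q^{-Q^2+s}+\cdots$, and $\ell\nmid Q$ then yields the contradiction. An analogous computation (Proposition~\ref{prop:m-q-l-exp}) occupies several pages later in the paper, which indicates the amount of work your sketch leaves out. Until you either perform this cusp expansion or cite a result that provides it, the assertion $j>0$ is unproved; the rest of your proposal stands.
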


\begin{proof}
We begin by proving that $M_{Q\ell^j,t}\equiv 0 \pmod{\ell}$. Applying Lemma~\ref{lem:radu} with $B=-1$ and $N=1$ we see that as $a$ ranges over integers with $(a,6m)=1$, the quantity $t_{A}$ covers each of the progressions $t+\lambda Q\ell^j \pmod m$ for $0\leq \lambda  < 2^{u}3^{v}$. This fact and Lemma~\ref{lem:M-t-t-A} together imply that
\[
	M_{m,t+\lambda Q\ell^j} \equiv 0 \pmod{\ell}
\]
for $0\leq \lambda < 2^{u}3^{v}$. We have
\[
	\bigcup_{\lambda =0}^{2^{u}3^{v}-1}\{mn+(t+\lambda Q\ell^j)\} = \bigcup_{\lambda =0}^{2^{u}3^{v}-1}\{Q\ell^j(2^{u}3^{v}n+\lambda ) +t\} = \{Q\ell^j n+t\},
\]
since every integer can be written as $2^{u}3^{v}n+\lambda $ for some $\lambda $ with $0\leq \lambda  < 2^{u}3^{v}$. Therefore $M_{Q\ell^j,t} \equiv 0 \pmod{\ell}$.

We will now show that $j>0$. Suppose, by way of contradiction, that $j=0$, so that $M_{Q\ell^j,t} = M_{Q,t}$. For a sufficiently large $s\in \N$, define
\[
	g:= \Delta^{s} M_{Q,t}^{24Q} \in M_{12(Q+s)}(\Gamma_{1}(2Q)).
\]
Then by \cite[Proposition 9]{AK}, the leading term in the $q$-expansion of $g$ at the cusp $1/2$ is given by
\[
	g \big|_{12(Q+s)} \pMatrix{1}{0}{2}{1} = Q^{-12Q} q^{-Q^{2}+s} + \cdots.
\]
By assumption, $M_{Q,t} \equiv 0 \pmod{\ell}$, so the modular form $\ell^{-24Q}g$ has integral coefficients. But by Theorem~\ref{thm:deligne-coeff}, the coefficients of 
\[
	\ell^{-24Q} g \big|_{12(Q+s)} \pMatrix{1}{0}{2}{1}
\]
lie in $\Z[1/2Q,\zeta_{2Q}]$, which contradicts $\ell \nmid Q$.

We now show that $M_{Q\ell,t} \equiv 0 \pmod{\ell}$ under the assumption $\ell \nmid (24t-1)$. By \cite[Lemma 4.6]{Radu:ao}, for each $r$ with $0\leq r < \ell^{j-1}$ there exists an integer $a_{r}$ with $(a_r,6Q\ell)=1$ such that
\[
	a_{r}^{2}(24t-1) \equiv 24(t + Q\ell r) - 1 \pmod{Q\ell^{j}}.
\]
Therefore $t+Q\ell r=t_A$ for some $A \in \Gamma_0(2Q\ell^j)$ as in \eqref{def:t-A}, so by Lemma~\ref{lem:M-t-t-A} we have $M_{Q\ell^j,t+Q\ell r} \equiv 0 \pmod{\ell}$. Since
\[
	\bigcup_{r=0}^{\ell^{j-1}-1} \{Q\ell^{j}n+ (t+Q\ell r)\} = \bigcup_{r=0}^{\ell^{j-1}-1} \{Q\ell(\ell^{j-1}n + r) + t\} = \{Q\ell n+t\},
\]
we conclude that $M_{Q\ell,t} \equiv 0 \pmod{\ell}$.
\end{proof}

We will use the following proposition to prove that $\legendre{24t-1}{\ell} \neq \legendre{-1}{\ell}$.

\begin{prop} \label{prop:m-q-l-exp}
	If $(Q,6\ell)=1$ and $t\equiv (1-Q^{2})/24 \pmod{\ell}$ then
	\begin{equation}
		M_{Q\ell,t}^{48Q\ell^2} \Delta^{s} \big|_{24Q\ell^2+12s} \pMatrix{1}{0}{2\ell}{1} = Q^{-24Q\ell^2}q^{-2Q^{2}\ell+s} + \cdots.
	\end{equation}
\end{prop}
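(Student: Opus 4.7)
The plan is to compute the $q$-expansion of $M_{Q\ell,t}|_{1/2}\pmatrix{1}{0}{2\ell}{1}$ at $\infty$, identify its leading term, and then raise to the $48Q\ell^2$-th power. Substituting \eqref{eq:def-M-m-t} into the slash action gives
\[
M_{Q\ell,t}\Big|\pMatrix{1}{0}{2\ell}{1}(z) = \frac{(2\ell z+1)^{-1/2}}{Q\ell} \sum_{\lambda=0}^{Q\ell-1} \zeta_{Q\ell}^{-\lambda(t-1/24)}\, M(w_\lambda), \quad w_\lambda := \frac{(1+2\lambda\ell)z+\lambda}{2Q\ell^2 z+Q\ell}.
\]
For each $\lambda$ I would let $d_\lambda := \gcd(1+2\lambda\ell, Q)$ and factor the integer matrix as
\[
\pMatrix{1+2\lambda\ell}{\lambda}{2Q\ell^2}{Q\ell} = A_\lambda \pMatrix{d_\lambda}{\beta_\lambda}{0}{Q\ell/d_\lambda},
\]
where $A_\lambda = \pmatrix{a_\lambda}{b_\lambda}{c_\lambda}{e_\lambda} \in \Gamma_0(2)$ (the entry $c_\lambda = 2Q\ell^2/d_\lambda$ is even since $Q$ is odd) and $0 \le \beta_\lambda < Q\ell/d_\lambda$; the determinant computation works out because $(1+2\lambda\ell)Q\ell - 2Q\ell^2\lambda = Q\ell$. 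Then \eqref{eq:M-transform} rewrites $M(w_\lambda) = w(A_\lambda)(c_\lambda\tau_\lambda+e_\lambda)^{1/2}\,M(\tau_\lambda)$, where $\tau_\lambda := (d_\lambda z+\beta_\lambda)/(Q\ell/d_\lambda)$.

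The leading $q$-exponent of $M(\tau_\lambda)$ equals $-d_\lambda^2/(24Q\ell)$, which is most negative precisely when $d_\lambda = Q$, i.e., $Q \mid 1+2\lambda\ell$. This condition picks out exactly $\ell$ values $\lambda = \lambda_0 + Qk$ with $0 \le k < \ell$, each giving exponent $-Q/(24\ell)$, while all other $\lambda$ contribute strictly higher (less negative) exponents. For these dominant $\lambda$ one checks that $\beta_\lambda \equiv Q\lambda \pmod \ell$ (from $a_\lambda\beta_\lambda \equiv \lambda \pmod \ell$ together with $a_\lambda \equiv Q^{-1}\pmod \ell$), and that $(c_\lambda\tau_\lambda+e_\lambda)^{1/2}/(2\ell z+1)^{1/2} \to Q^{1/2}$ as $z \to i\infty$. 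Therefore the leading coefficient of $M_{Q\ell,t}|\pmatrix{1}{0}{2\ell}{1}$ at $\infty$ is
\[
\frac{Q^{-1/2}}{\ell}\sum_{k=0}^{\ell-1} \zeta_{Q\ell}^{-(\lambda_0+Qk)(t-1/24)}\, w(A_{\lambda_0+Qk})\, \zeta_{24\ell}^{-\beta_{\lambda_0+Qk}},
\]
multiplied by $q^{-Q/(24\ell)}$.

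The central step is evaluating this inner sum. Writing out $w(A_\lambda)$ via the explicit formula and applying Lemma~\ref{lem:dedekind-sum} to control how $s(-e_\lambda,c_\lambda)$ depends on $k$, the $k$-dependent portion of the total phase collapses to a power of $\zeta_\ell$ whose exponent is a $\mathbb{Z}$-linear combination of $24t-1$ and $1-Q^2$; the hypothesis $t\equiv(1-Q^2)/24 \pmod\ell$ is precisely what makes this exponent vanish modulo $\ell$. Consequently the summand is independent of $k$, the inner sum equals $\ell\epsilon$ for some root of unity $\epsilon$ whose order divides $\mathrm{lcm}(24Q\ell,48\ell^2) \mid 48Q\ell^2$, and the leading term is $\epsilon\,Q^{-1/2}\, q^{-Q/(24\ell)}$. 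Raising to the $48Q\ell^2$-th power annihilates $\epsilon$ and produces $Q^{-24Q\ell^2}q^{-2Q^2\ell}+\cdots$; multiplying by $\Delta^s|\pmatrix{1}{0}{2\ell}{1} = \Delta^s = q^s+\cdots$ completes the claim.

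The hardest step is the phase bookkeeping: one must combine the explicit form of $w(A_\lambda)$ (involving Dedekind sums, the factor $i^{-1/2}$, and several sign conventions) with Lemma~\ref{lem:dedekind-sum}'s transformation rule, while simultaneously tracking $\zeta_{Q\ell}^{-\lambda(t-1/24)}$ and $\zeta_{24\ell}^{-\beta_\lambda}$ as $k$ varies. Only a careful accounting verifies that the hypothesis on $t$ produces genuine $k$-cancellation (an $\ell$-fold reinforcement giving the factor $\ell\epsilon$), as opposed to a sum over a nontrivial character of $\Z/\ell\Z$ (which would vanish and destroy the expected pole).
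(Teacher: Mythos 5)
Your proposal follows the paper's proof essentially step for step: the same matrix decomposition with $d_\lambda=(1+2\ell\lambda,Q)$ and $A_\lambda\in\Gamma_0(2)$, the same identification of the dominant terms $d_\lambda=Q$ contributing exponent $-Q/(24\ell)$ with automorphy factor $Q^{1/2}$, the same use of Lemma~\ref{lem:dedekind-sum} to control the Dedekind sums in $w(A_\lambda)$, and the same observation that the hypothesis on $t$ turns the sum over $\delta$ into $\ell$ equal terms (so that $K^{48Q\ell^2}=Q^{-24Q\ell^2}$) rather than a vanishing character sum. The one thing you have not actually carried out is the explicit phase computation showing that the $\delta$-dependent part of $w(A_\lambda)$, combined with $\zeta_{Q\ell}^{-\lambda(t-1/24)}$ and $\zeta_{24\ell}^{-\beta_\lambda}$, really does collapse to $\exp\bigl(-2\pi i\,\tfrac{\delta}{\ell}(t-\tfrac{1-Q^2}{24})\bigr)$ --- that bookkeeping is where all the work in the paper's proof lives --- but your outline of how it goes, and of why the hypothesis on $t$ is exactly what is needed, is accurate.
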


Assume for the moment that the proposition is true and suppose, by way of contradiction, that $\legendre{24t-1}{\ell} = \legendre{-1}{\ell}$. We will construct an integer $t'$ satisfying both $t'=t_A$ for some $A \in \Gamma_0(2Q\ell)$ and $t' \equiv (1-Q^2)/24 \pmod{\ell}$. Let $\alpha$ be an integer satisfying
\[
	(24t-1)\alpha^{2} \equiv -1 \pmod{\ell}.
\]
Since $\ell \nmid \alpha Q$, there exists an integer $a$ with $(a,6Q\ell)=1$ such that
\begin{equation*} 
	a \equiv Q\alpha \pmod{\ell}. 
\end{equation*}
Let $t'$ be any integer satisfying
\begin{equation*} \label{eq:def-t'}
	a^{2}(24t-1) \equiv (24t'-1) \pmod{Q\ell}.
\end{equation*}
Then $t'=t_A$ for some $A \in \Gamma_0(2Q\ell)$ as in \eqref{def:t-A} and 
\begin{equation}
	t' \equiv \frac{1-Q^{2}}{24} \pmod{\ell}.
\end{equation}
By Proposition~\ref{prop:M-Q-ell} and Lemma~\ref{lem:M-t-t-A} we have
\begin{equation} \label{eqn:(qln+t')=0-mod-l}
	M_{Q\ell,t'} \equiv 0 \pmod{\ell}.
\end{equation}
For $s$ sufficiently large, define 
\[
	g':= M_{Q\ell,t'}^{48Q\ell^2} \Delta^{s} \in M_{24Q\ell^2+12s}(\Gamma_1(2Q\ell)).
\]
We have $g'\equiv 0 \pmod{\ell}$, so by Theorem~\ref{thm:deligne-mod-p} we have
\[
	g' \big|_{24Q\ell^2 +12s} \pMatrix{1}{0}{2\ell}{1} \equiv 0 \pmod{\ell},
\]
but this contradicts Proposition~\ref{prop:m-q-l-exp}.

\begin{proof}[Proof of Proposition~\ref{prop:m-q-l-exp}]

We compute the leading coefficient of $M_{Q\ell,t}$ at the cusp $1/2\ell$. By \eqref{eq:def-M-m-t} we have

\begin{align*}
	M_{Q\ell,t} \left( \pMatrix{1}{0}{2\ell}{1} z \right) 
	&= \frac{1}{Q\ell} \sum_{\lambda=0}^{Q\ell-1} \zeta_{Q\ell}^{-\lambda(t-1/24)} M \left( \pMatrix{1}{\lambda}{0}{Q\ell} \pMatrix{1}{0}{2\ell}{1} z \right) \\
	&= \frac{1}{Q\ell} \sum_{\lambda=0}^{Q\ell-1} \zeta_{Q\ell}^{-\lambda(t-1/24)} M \left( A_{\lambda} \pMatrix{d_{\lambda}}{\lambda'}{0}{Q\ell/d_{\lambda}} z \right)
\end{align*}
where $d_{\lambda} := (1+2\ell \lambda, Q)$, the integer $\lambda'$ satisfies the congruence
\[
	\frac{1+2\ell\lambda}{d_{\lambda}} \lambda' \equiv \lambda \pmod{Q\ell/d_{\lambda}},
\]
and $A_{\lambda}$ is the matrix
\[
	A_{\lambda} := \pMatrix{\dfrac{1+2\ell\lambda}{d_{\lambda}}} {\dfrac{-\frac{1+2\ell\lambda}{d_{\lambda}}\lambda'+\lambda}{Q\ell/d_{\lambda}}} {2\ell^{2}Q/d_{\lambda}} {-2\ell\lambda'+d_{\lambda}} \in \Gamma_{0}(2).
\]
Recall the transformation law for $M(z)$ in \eqref{eq:M-transform}. Since $M(z) = q^{-1/24} + \ldots$, the leading term of
\[
	(2\ell z + 1)^{-1/2} M_{Q\ell,t}\left( \pMatrix{1}{0}{2\ell}{1} z \right)
\]
arises from those $\lambda$ for which $d_{\lambda} = Q$. For these $\lambda$ we can take $\lambda'=Q\lambda$. The leading coefficient is
\[
	K = \frac{1}{Q\ell} \sum_{d_{\lambda} = Q} \zeta_{Q\ell}^{-\lambda(t-1/24)} \omega(A_{\lambda}) Q^{1/2} e^{-2\pi i Q\lambda/24\ell}.
\]
We must show that $K^{48Q\ell^2} = Q^{-24Q\ell^2}$, so it is enough to compute $K$ up to a $48Q\ell^2$-th root of unity. To this end, we will factor out any terms in the exponent of $\omega(A_\lambda)$ which are independent of $\lambda$ and collect them in roots of unity denoted $\omega_1, \omega_2,$ etc. If $A_{\lambda} = \pmatrix{a}{b}{c}{d}$, note that
\begin{align*}
	\frac{c+1+ad}{2} &= \ell^2 + 1 - 2\ell^2 \lambda^2 \equiv 0 \pmod{2}
\end{align*}
and
\begin{align*}
	-\frac{a+d}{24c} - \frac{a}{4} + \frac{3dc}{8} - \frac{Q\lambda}{24\ell}
	&= \frac{Q(36\ell^{4}-1)}{48\ell^{2}} - \frac{(1+2\ell\lambda)}{Q}\frac{(12\ell^{2}+1)}{48\ell^{2}} - \frac{3}{2}\ell^{3}Q\lambda \\
	&= - \lambda\frac{(12\ell^{2}+1)}{24Q\ell} - \frac{\lambda}{2} + \underbrace{\frac{3Q^2\ell^2-1}{4Q} - \frac{Q^2-1}{48Q\ell^2}}_{\text{independent of $\lambda$}} + \underbrace{\lambda\pfrac{1-3Q\ell^3}{2}}_{\text{an integer}}.
\end{align*}
Choose $\alpha \in \Z$ so that $\alpha Q \equiv 1 \pmod{2\ell}$ and $3\nmid\alpha$. Then, applying Lemma~\ref{lem:dedekind-sum} to the matrix $\pmatrix{\alpha}{*}{2\ell}{Q} \in \Gamma_0(\ell)$, we have
\[
	s(-Q + 2\ell\lambda', 2\ell^{2}) = s(-Q,2\ell^{2}) + \lambda' \frac{1-\alpha^{2}}{12\ell} + \text{an even integer}.
\]
Since $\lambda' \alpha \equiv \lambda \pmod{\ell}$ and  $1-Q^{2}\alpha^{2}\equiv 0\pmod{24\ell}$, we have
\begin{align*}
	\lambda'\frac{1-\alpha^{2}}{24\ell} &= \lambda'\frac{1-\alpha^{2}Q^{2}}{24\ell} - \frac{\lambda'\alpha^{2}}{\ell}\pfrac{1-Q^{2}}{24} \\
	&= - \frac{\lambda \alpha}{\ell} \left(\frac{1-Q^{2}}{24}\right) + \text{ an integer }.
\end{align*}
Since $6c\, s(-d,c)\in \Z$ (see \cite[Theorem 3.8]{Apostol}), the term $e^{\pi i s(-Q,2\ell^2)}$ is a $24\ell^2$-th root of unity. Therefore
\[
	K = \frac{\omega_1}{Q^{1/2}\ell} \sum_{d_{\lambda} = Q} \exp\left( -2\pi i \left( \frac{\lambda(t-1/24)}{Q\ell} - \frac{\lambda \alpha}{\ell}\pfrac{1-Q^{2}}{24} + \frac{\lambda}{2} +  \lambda\pfrac{12\ell^{2}+1}{24Q\ell} \right) \right).
\]
Let $\lambda_{0}$ be the smallest $\lambda$ for which $d_{\lambda}=Q$. Then each $\lambda$ is of the form $\lambda = \lambda_{0} + \delta Q$ for some $1\leq \delta \leq \ell-1$. Factoring out those terms which do not depend on $\delta$ we obtain
\begin{equation*}
	K = \frac{\omega_{2}}{Q^{1/2}\ell} \sum_{\delta=0}^{\ell-1} \exp\left( -2\pi i \, \delta \left( \frac{t}{\ell} - \frac{Q\alpha}{\ell} \left(\frac{1-Q^{2}}{24}\right) + \frac{Q+\ell}{2} \right) \right).
\end{equation*}
Note that $(Q+\ell)/2$ is an integer. Since $Q\alpha \equiv 1 \pmod{\ell}$ and $(1-Q^2)/24$ is an integer, we obtain
\begin{align*}
	K &= \frac{\omega_{2}}{Q^{1/2}\ell} \sum_{\delta=0}^{\ell-1} \exp\left( -2\pi i \frac{\delta}{\ell} \left( t- \frac{1-Q^{2}}{24} \right) \right).
\end{align*}
By assumption, $t\equiv (1-Q^{2})/24 \pmod{\ell}$ so each term in the sum is $1$. Therefore
\begin{align*}
	K^{48Q\ell^2} &= Q^{-24Q\ell^2},
\end{align*}
as desired.
\end{proof}

\section{Proof of Theorem~\ref{thm:omega-q}}

Define
\begin{gather} \label{eq:def-O-m-t}
	\Omega_{m,t}(z) :=  \frac{1}{m} \sum_{\lambda=0}^{m-1} \zeta_{m}^{-\lambda(t+2/3)} \Omega \pfrac{z+\lambda}{m}.
\end{gather}
In this case we call a progression $t\pmod{m}$ \emph{good} if for some $p|m$ we have $\legendre{-3t-2}{p} = -1$. By \cite{AK}, if $t\pmod{m}$ is good, $\Omega_{m,t}$ is weakly holomorphic. If $t \pmod m$ is not good, we can reduce to the case of a good progression $T \pmod{mp}$ as before, so to prove Theorem~\ref{thm:omega-q} we may assume that $t \pmod{m}$ is a good progression. A calculation shows that if $t\pmod{m}$ is good then
\[
	\Omega_{m,t} = q^{\frac{t+2/3}{m}} \sum c(mn+t) q^{n}.
\]
Thus, as in the $f(q)$ case, the second statement in Theorem~\ref{thm:omega-q} follows from the first statement.

We will need results from \cite{AK} which describe the transformation of $\Omega_{m,t}$ under the action of $\Gamma_{0}(2N_{m})$. If $A = \pmatrix{a}{b}{c}{d} \in \Gamma_{0}(2N_{m})$ has $3\nmid a$ then we define $t_{A}$ to be any integer satisfying
\begin{equation} \label{def:O-t-A}
	t_{A} \equiv ta^{2} + \frac{2}{3}(a^{2}-1) \pmod{m}.
\end{equation}

\begin{prop} \label{prop:O-m-t-O-m-t-A}
	For every $A = \pmatrix{a}{b}{c}{d} \in \Gamma_{0}(2N_{m})$ with $3\nmid a$ we have
	\[
		\Omega_{m,t}^{24m} \big|_{12m} A = \Omega_{m,t_{A}}^{24m}.
	\]
\end{prop}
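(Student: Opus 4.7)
The plan is to verify the identity by direct calculation, mirroring the proof of Proposition~\ref{prop:M-m-t-M-m-t-A} in \cite{AK}. Starting from the definition \eqref{eq:def-O-m-t} and applying the slash operator to $\Omega_{m,t}$, I would seek, for each $\lambda \in \{0, 1, \ldots, m-1\}$, a matrix $A_\lambda \in \SL_2(\Z)$ and an integer $\lambda'$ with
\[
	\pMatrix{1}{\lambda}{0}{m} A = A_\lambda \pMatrix{1}{\lambda'}{0}{m}.
\]
Since $A \in \Gamma_0(2N_m)$ forces $m \mid c$ and hence $(a,m) = 1$, the congruence $\lambda' \equiv a^{-1}(b+\lambda d) \pmod m$ uniquely determines $\lambda' \in \{0, \ldots, m-1\}$, and the induced map $\lambda \mapsto \lambda'$ is a bijection with inverse $\lambda \equiv a^2\lambda' - ab \pmod m$. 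A direct calculation gives
\[
	A_\lambda = \pMatrix{a+\lambda c}{-(c/m)(b-a\lambda')^2}{mc}{d-c\lambda'},
\]
whose bottom-left entry $mc$ is even because $2 \mid c$.

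The even bottom-left entry puts us in the first case of \eqref{eq:O-transform}, and substituting $w = (z+\lambda')/m$ gives $mc\,w + (d-c\lambda') = cz+d$. Hence
\[
	(cz+d)^{-1/2}\Omega_{m,t}(Az) = \frac{1}{m}\sum_{\lambda=0}^{m-1} \zeta_m^{-\lambda(t+2/3)} w_1(A_\lambda) \, \Omega\pfrac{z+\lambda'}{m}.
\]
Next I would reindex by $\lambda'$ and match coefficients term-by-term against the definition of $\Omega_{m,t_A}$. Using $\lambda \equiv a^2\lambda' - ab \pmod m$, the difference of $\zeta_m$-exponents becomes $\lambda'[t_A - a^2 t - 2(a^2-1)/3] + ab(t+2/3)$. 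Since $3 \nmid a$, the quantity $2(a^2-1)/3$ is an integer, and the choice \eqref{def:O-t-A} makes the bracketed quantity divisible by $m$, collapsing the $\lambda'$-dependent part of the $\zeta_m$-factor to a $\lambda'$-independent constant.

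The hard part is showing that $w_1(A_\lambda)$ is itself $\lambda'$-independent up to a $24m$-th root of unity. Writing $A_\lambda = \pmatrix{\alpha}{\beta}{\gamma}{\delta}$ and inspecting the four pieces of $w_1$, the $(\alpha-1)/4$ term contributes trivially because $\lambda c$ is a multiple of $4m$ (when $(m,6)=1$, with analogous divisibilities in the other three cases), and $3\alpha\beta/4$ is an integer because $c/m$ is a multiple of $4$ and $\beta = -(c/m)(b-a\lambda')^2$. Crucially, the $\lambda'$-dependent parts of $-\tfrac{1}{2}s(-\delta,\gamma/2)$ and of $-(\alpha+\delta)/(12\gamma)$ cancel: applying Lemma~\ref{lem:dedekind-sum} to a matrix $\pmatrix{\ast}{\ast}{c/2}{d}\in\Gamma_0(m)$ with shift $2\lambda'$ contributes $\lambda'(a^2-1)/(12m)$, while the expansion $-(\alpha+\delta)/(12\gamma) = -(a+d)/(12mc) - (\lambda-\lambda')/(12m)$ contributes a $\lambda'$-dependent piece $-(a^2-1)\lambda'/(12m)$, and the two sum to zero. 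The remaining $\lambda'$-independent residue is a $24m$-th root of unity, so raising the identity to the $24m$-th power yields the proposition. The bookkeeping must be repeated case-by-case according to $(m,6) \in \{1,2,3,6\}$ so that the divisibility hypotheses of Lemma~\ref{lem:dedekind-sum} and the integrality of $\beta$ hold throughout.
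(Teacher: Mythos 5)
First, note that the paper does not actually prove this proposition: it is imported from \cite{AK} ("We will need results from \cite{AK}\ldots"), so your attempt has to stand on its own, and as written it has genuine gaps. The central one is an internal inconsistency in the reindexing. You insist that $\lambda,\lambda'$ run over $\{0,\dots,m-1\}$ with $\lambda'\equiv a^{-1}(b+\lambda d)\pmod m$, but the displayed matrix $A_\lambda$ with upper-right entry $-(c/m)(b-a\lambda')^2$ is the one you get only if you take $\lambda=a^2\lambda'-ab$ \emph{exactly}, not merely modulo $m$. With reduced representatives the upper-right entry is $\bigl(b+\lambda d-(a+\lambda c)\lambda'\bigr)/m$, and your subsequent claims fail: for $m=5$, $A=\pmatrix{7}{1}{20}{3}\in\Gamma_0(20)$, $\lambda=1$ one gets $\lambda'=2$ and $A_\lambda=\pmatrix{27}{-10}{100}{-37}$, so $\beta=-10$ is not divisible by $4$ and $3\alpha\beta/4\notin\Z$; moreover, since $\zeta_m^{-\lambda(t+2/3)}$ depends on $\lambda$ modulo $3m$ (not $m$), substituting $\lambda\equiv a^2\lambda'-ab\pmod m$ into the exponent silently drops $\lambda$-dependent cube roots of unity $e^{4\pi i k_{\lambda'}/3}$ (e.g.\ $\lambda=2$ in the same example gives wraparound $k=8$, a nontrivial factor), and the same wraparound integer also enters $-(\alpha+\delta)/(12\gamma)$ through $-(\lambda-\lambda')/(12m)$, spoiling the "exact cancellation" with the Dedekind-sum shift. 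The computation can be repaired by summing over the translated residue system $\{a^2\lambda'-ab\}$ instead, but that requires first proving and invoking the periodicity $\Omega(w+1)=e^{4\pi i/3}\,\Omega(w)$ of the summand, which you never do.

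Two further points are not mere bookkeeping. First, you simply assert that the $\lambda$-independent residue is a $24m$-th root of unity; but that residue involves $e^{-\pi i s(-d,mc/2)}$ and $e^{-2\pi i(a+d)/(12mc)}$, whose individual orders grow with $c$, so showing their product has order dividing $24m$ is precisely the content that the $24m$-th power in the proposition is designed to absorb, and it needs an argument. Second, Lemma~\ref{lem:dedekind-sum} requires $(m,6)=1$, and when $2$ or $3$ divides $m$ the shift parameter you would need (to pass from $s(-d,mc/2)$ to $s(-d+\lambda'c,mc/2)$ relative to a modulus coprime to $6$) is not an integer; so the cases $(m,6)>1$ cannot be dispatched by "repeating the bookkeeping" --- they need either a generalized shift lemma or a different argument, which is presumably why the paper cites \cite{AK} rather than reproving the transformation law. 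Finally, the case $c\le 0$ (where \eqref{eq:O-transform} does not apply directly) is never addressed.
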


\begin{prop} \label{prop:O-m-t-modular}
	Suppose that $t \pmod m$ is good. Then there exists $s \in \N$ such that
	\[
		\Delta^{s} \, \Omega_{m,t}^{24m} \in M_{12(m+s)}(\Gamma_{1}(2N_{m})).
	\]
\end{prop}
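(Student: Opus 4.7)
The approach is to parallel the brief proof of Proposition~\ref{prop:M-m-t-modular}, with Proposition~\ref{prop:O-m-t-O-m-t-A} playing the role of Proposition~\ref{prop:M-m-t-M-m-t-A}. Two steps are required: first, promote the $\Gamma_0(2N_m)$-transformation law (valid only when $3\nmid a$) to full $\Gamma_1(2N_m)$-invariance of $\Omega_{m,t}^{24m}$ in weight $12m$; second, multiply by a high enough power of $\Delta$ to eliminate the remaining cuspidal poles.

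For the first step, I would show that every $A\in \Gamma_1(2N_m)$ factors as a product of elements of $\Gamma_0(2N_m)$ whose upper-left entries are coprime to $3$. The key observation is that if $A=\pmatrix{a}{b}{c}{d}\in \SL_2(\Z)$ has $3\mid a$, the identity $ad-bc=1$ forces $bc\equiv -1\pmod 3$, so in particular $3\nmid c$. Given such $A\in \Gamma_0(2N_m)$, the factorization
\[
A = \pMatrix{1}{-1}{0}{1}\pMatrix{a+c}{b+d}{c}{d}
\]
exhibits $A$ as a product of two elements of $\Gamma_0(2N_m)$, each with upper-left entry coprime to $3$ (for the second factor, $a+c\equiv c\not\equiv 0\pmod 3$). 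Moreover, $m\mid 2N_m$ in every case of the definition of $N_m$, so when $A\in \Gamma_1(2N_m)$ we have $a\equiv 1\pmod m$ and $c\equiv 0\pmod m$; both factors then have upper-left entry $\equiv 1\pmod m$, and the integer defined in \eqref{def:O-t-A} may be taken to equal $t$ at each step. Iterating Proposition~\ref{prop:O-m-t-O-m-t-A} across the factorization therefore yields
\[
\Omega_{m,t}^{24m}\big|_{12m} A = \Omega_{m,t}^{24m}
\]
for every $A\in \Gamma_1(2N_m)$, so $\Omega_{m,t}^{24m}$ is a weakly holomorphic modular form of weight $12m$ on $\Gamma_1(2N_m)$.

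Finally, goodness of $t\pmod m$ implies via the cited result of \cite{AK} that $\Omega_{m,t}$ is weakly holomorphic; consequently $\Omega_{m,t}^{24m}$ has only finitely many poles, all supported at cusps of $\Gamma_1(2N_m)$. Since $\Delta$ is a holomorphic cusp form of weight $12$ vanishing to positive order at every cusp, any $s$ exceeding the maximum pole order of $\Omega_{m,t}^{24m}$ at any cusp produces a form that is holomorphic throughout $\H$ and at every cusp, giving $\Delta^s\Omega_{m,t}^{24m}\in M_{12(m+s)}(\Gamma_1(2N_m))$ as claimed. The only real obstacle is the generating-set assertion, but the determinant trick disposes of it cleanly; the rest of the proof is a direct transcription of the $M$-case argument.
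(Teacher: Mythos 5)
Your proposal takes the same route the paper does (the paper gives no separate proof here; it leans on the same one-line justification used for Proposition~\ref{prop:M-m-t-modular}, namely that $\Gamma_1$ is reached by $\Gamma_0$-matrices with upper-left entry prime to $3$, followed by iterating the $|_{12m}$-transformation and clearing cusp poles with $\Delta^s$), and the explicit factorization $A=\pmatrix{1}{-1}{0}{1}\pmatrix{a+c}{b+d}{c}{d}$ is a clean way to make the generation claim concrete. (If you are worried that Proposition~\ref{prop:O-m-t-O-m-t-A} traces back to \eqref{eq:O-transform}, which is stated for $c>0$, note that invariance of $\Omega_{m,t}^{24m}$ under $\pmatrix{1}{-1}{0}{1}$ can be read off directly from \eqref{eq:def-O-m-t}, since the relevant root of unity is $e^{-2\pi i\,24(t+2/3)}=1$.)

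The one step to tighten is the assertion that an upper-left entry $\equiv 1\pmod m$ lets you take $t_A=t$ in \eqref{def:O-t-A}. Because of the term $\tfrac23(a^2-1)$, taking $t_A=t$ requires $m\mid\tfrac{(3t+2)(a^2-1)}{3}$, and $a\equiv 1\pmod m$ together with $3\nmid a$ yields this only when $3\nmid m$: for instance $m=3$, $a=4$ gives $\tfrac23(a^2-1)=10\not\equiv 0\pmod 3$, so the inference as literally stated fails. The gap closes at once from data you already have: if $3\mid m$ then $3\mid N_m$, indeed $12m\mid 2N_m$, so every $A\in\Gamma_1(2N_m)$ has $a\equiv 1\pmod{12m}$; hence $3\nmid a$ automatically (your factorization is not even needed in this case) and $3m\mid a^2-1$, so $t_A\equiv t\pmod m$. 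If $3\nmid m$, then $m\mid a^2-1$ and $3\mid a^2-1$ with $(3,m)=1$ give $3m\mid a^2-1$, and your argument (applied to both factors, whose upper-left entries are $1$ and $a+c\equiv 1\pmod{2N_m}$ with $3\nmid a+c$) is complete as written. With that case distinction inserted, the proof is correct and matches the paper's approach.
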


We will need an analogue of Lemma~\ref{lem:M-t-t-A} which can be proved similarly.

\begin{lem} \label{lem:O-t-t-A}
	Suppose that $t \pmod{m}$ is good. Let $A=\pmatrix{a}{b}{c}{d} \in \Gamma_{0}(2N_{m})$ with $3\nmid a$, and let $t_A$ be as in \eqref{def:O-t-A}. If
		$\Omega_{m,t} \equiv 0 \pmod{\ell}$
	then $\Omega_{m,t_A} \equiv 0 \pmod{\ell}$.
\end{lem}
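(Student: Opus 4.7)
The plan is to imitate the proof of Lemma~\ref{lem:M-t-t-A} essentially verbatim, replacing each input with its $\omega$-analogue. The skeleton is: promote $\Omega_{m,t}$ to an honestly holomorphic modular form by multiplying by a high power of $\Delta$, use the transformation identity of Proposition~\ref{prop:O-m-t-O-m-t-A} to express its slash-translate in terms of $\Omega_{m,t_A}$, and then invoke Deligne--Rapoport (Theorem~\ref{thm:deligne-mod-p}) to transport the congruence $\Omega_{m,t}\equiv 0 \pmod{\ell}$ across the slash action.

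Concretely, first I would apply Proposition~\ref{prop:O-m-t-modular} to pick $s\in\N$ with
\[
	g := \Delta^{s}\,\Omega_{m,t}^{24m} \in M_{12(m+s)}(\Gamma_{1}(2N_{m})).
\]
The hypothesis $\Omega_{m,t}\equiv 0\pmod{\ell}$ immediately gives $g\equiv 0\pmod{\ell}$ at the cusp $i\infty$. Next, since $\Delta$ is invariant under the weight-$12$ slash action of all of $\SL_{2}(\Z)$, Proposition~\ref{prop:O-m-t-O-m-t-A} yields
\[
	g \big|_{12(m+s)} A = \Delta^{s}\,\Omega_{m,t_{A}}^{24m}
\]
for every $A=\pmatrix{a}{b}{c}{d}\in\Gamma_{0}(2N_{m})$ with $3\nmid a$. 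Applying Theorem~\ref{thm:deligne-mod-p} (with $N=2N_{m}$, $p=\ell$, and $\gamma=A$, noting that $A\in\Gamma_{0}(\ell^{v_{\ell}(2N_{m})})$), the $\pi$-adic valuation of the $q$-expansion is preserved, so the translate is also congruent to $0$ modulo $\ell$. This forces $\Omega_{m,t_{A}}^{24m}\equiv 0\pmod{\ell}$, and since we are working modulo a prime ideal, taking $24m$-th roots preserves the congruence and gives $\Omega_{m,t_{A}}\equiv 0\pmod{\ell}$.

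The only step requiring more than transcription is confirming that the level and the characteristic prime interact properly in the invocation of Theorem~\ref{thm:deligne-mod-p} — that is, that one is allowed to apply the theorem when $\ell\mid N_{m}$, which is the regime of interest. This is handled exactly as in the proof of Lemma~\ref{lem:M-t-t-A}: the exponent of $\ell$ in $2N_{m}$ is absorbed into the hypothesis $\gamma\in\Gamma_{0}(p^{m})$ with $p^{m}\|N$, and no new obstacle appears. This is precisely why the paper remarks that the lemma ``can be proved similarly.''
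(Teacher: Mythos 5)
Your argument is exactly the one the paper intends: it omits the proof of this lemma as ``similar'' to Lemma~\ref{lem:M-t-t-A}, and you reproduce that proof with the correct substitutions, namely Proposition~\ref{prop:O-m-t-modular} to form $\Delta^{s}\Omega_{m,t}^{24m}$, Proposition~\ref{prop:O-m-t-O-m-t-A} for the slash identity, and Theorem~\ref{thm:deligne-mod-p} (valid since $A\in\Gamma_{0}(2N_m)\subseteq\Gamma_{0}(\ell^{v_\ell(2N_m)})$) to transfer the congruence, finishing by extracting $24m$-th roots modulo the prime. This matches the paper's approach, so the proposal is correct.
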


We now show that $\ell | m$ and reduce to the case $m=Q\ell$ with $(Q,3\ell)=1$.

\begin{prop}
	Suppose that $t\pmod{m}$ is good and write $m=3^v \ell^j Q$ with $(Q,3\ell)=1$. If $\Omega_{m,t} \equiv 0 \pmod{\ell}$ then $j > 0$. If additionally $\ell \nmid (3t+2)$ then $\Omega_{Q\ell,t}\equiv 0 \pmod{\ell}$.
\end{prop}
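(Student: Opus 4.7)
My plan is to follow the three-step structure of the proof of Proposition~\ref{prop:M-Q-ell}, replacing $M_{m,t}$ by $\Omega_{m,t}$ and using Proposition~\ref{prop:O-m-t-O-m-t-A}, Proposition~\ref{prop:O-m-t-modular}, and Lemma~\ref{lem:O-t-t-A} in place of their $M$-counterparts. The defining relation \eqref{def:O-t-A} may be rewritten as $t_A \equiv ta^2 + 16(a^2-1)/24 \pmod m$, so Lemma~\ref{lem:radu} should be applied with $B = 16$ and $N = 1$. Since $B = 2^4$, formula \eqref{eq:def-alpha-beta} gives $\alpha = u$ and $\beta = 0$, whence $Q_{m,B} = 2^u m'$; writing $Q = 2^u Q'$ with $Q'$ odd and coprime to $3\ell$, one has $Q_{m,B} = Q\ell^j$ and $m/Q_{m,B} = 3^v$. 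As $\lambda$ ranges over $0 \leq \lambda < 3^v$, Lemma~\ref{lem:radu} produces integers $a_\lambda$ coprime to $6m$, and hence matrices $A_\lambda \in \Gamma_0(2N_m)$, with $t_{A_\lambda} \equiv t + \lambda Q\ell^j \pmod m$. Lemma~\ref{lem:O-t-t-A} then yields $\Omega_{m, t + \lambda Q\ell^j} \equiv 0 \pmod \ell$, and the union of these progressions collapses to $\{Q\ell^j n + t\}$, giving $\Omega_{Q\ell^j, t} \equiv 0 \pmod \ell$.

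Next I would prove $j > 0$ by contradiction. If $j = 0$, then for $s$ sufficiently large, $g := \Delta^s \Omega_{Q,t}^{24Q}$ lies in $M_{12(Q+s)}(\Gamma_1(2N_Q))$ by Proposition~\ref{prop:O-m-t-modular}, and $\ell^{-24Q} g$ has integral $q$-expansion at $\infty$. The key computation is to determine the leading term of $g$ at a cusp inequivalent to $\infty$ in $\Gamma_1(2N_Q)$: unwinding \eqref{eq:def-O-m-t} via the transformation law \eqref{eq:O-transform}, this leading term should take the form $Q^{-12Q} q^{N} + \cdots$ (up to a root of unity) for some negative integer $N$. Since $\ell \nmid Q$, this contradicts Theorem~\ref{thm:deligne-coeff}.

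Under the assumption $\ell \nmid (3t+2)$, I would then reduce from $\Omega_{Q\ell^j, t}$ to $\Omega_{Q\ell, t}$. For each $0 \leq r < \ell^{j-1}$, produce $a_r$ with $(a_r, 6Q\ell) = 1$ and $a_r^2(3t+2) \equiv 3(t + Q\ell r) + 2 \pmod{Q\ell^j}$. Existence is a direct analog of \cite[Lemma 4.6]{Radu:ao}: Hensel's lemma lifts the trivial solution modulo $\ell$ (using that $[3(t+Q\ell r)+2]/(3t+2) \equiv 1 \pmod \ell$ is a square modulo $\ell^j$ for odd $\ell$) and combines via CRT with $a_r \equiv 1 \pmod Q$, and a translation by a multiple of $Q\ell^j$ ensures coprimality with $6$. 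Then $t + Q\ell r = t_A$ for a suitable $A \in \Gamma_0(2N_{Q\ell^j})$, Lemma~\ref{lem:O-t-t-A} yields $\Omega_{Q\ell^j, t + Q\ell r} \equiv 0 \pmod \ell$, and the union $\bigcup_r \{Q\ell^j n + (t + Q\ell r)\} = \{Q\ell n + t\}$ finishes the argument.

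The principal obstacle is the cusp computation in the second step: the transformation law \eqref{eq:O-transform} splits according to the parity of $c$ or $d$, so unwinding \eqref{eq:def-O-m-t} at a non-$\infty$ cusp requires tracking both branches and bookkeeping the roots of unity $w_1, w_2$ together with Dedekind sums (handled via Lemma~\ref{lem:dedekind-sum}). I expect the leading coefficient to come out to $Q^{-12Q}$ up to a root of unity, paralleling the $M$-case result, but the explicit verification is where the bulk of the technical work lies.
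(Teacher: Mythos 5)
Your proposal follows the paper's proof step for step: Lemma~\ref{lem:radu} with $B=16$, $N=1$ to pass from $m$ to $Q\ell^{j}$, the Deligne--Rapoport argument at a cusp other than $\infty$ to force $j>0$, and the analogue of \cite[Lemma 4.6]{Radu:ao} to descend from $Q\ell^{j}$ to $Q\ell$. The one piece you leave unfinished --- the leading term of $\Omega_{Q,t}$ at the second cusp --- is precisely what the paper imports from \cite[Proposition 12]{AK}: at the cusp $1$ one has $\Delta^{s}\Omega_{Q,t}^{24Q}\big|_{12(Q+s)}\pmatrix{1}{0}{1}{1} = (-1)^{Q}(2Q)^{-12Q}q^{-Q^{2}/2+s}+\cdots$, which agrees with your expected $Q^{-12Q}$ up to a sign and a power of $2$, both harmless to the contradiction since $\ell\geq 5$.
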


\begin{proof}
Applying Lemma~\ref{lem:radu} with $B=16$ and $N=1$ we see that as $a$ ranges over integers with $(a,6m)=1$ the quantity $t_{A}$ covers each of the progressions $t+\lambda Q\ell^j \pmod m$, for $0\leq \lambda < 3^{v}$. So, as in the proof of Proposition~\ref{prop:M-Q-ell}, we can conclude that $\Omega_{Q\ell^j,t} \equiv 0 \pmod{\ell}$.

To show that $j>0$, suppose by way of contradiction that $j=0$, and define
\[
	h := \Delta^{s} \, \Omega_{Q,t}^{24Q} \in M_{12(Q+s)}(\Gamma_{1}(2Q)).
\]
Then by \cite[Proposition 12]{AK}, we have
\[
	h \big|_{12(Q+s)} \pMatrix{1}{0}{1}{1} = (-1)^{Q} (2Q)^{-12Q} q^{-Q^{2}/2+s} + \cdots.
\]
By assumption, $\Omega_{Q,t} \equiv 0 \pmod{\ell}$, so the modular form $\ell^{-24Q} h$ has integral coefficients. By Theorem~\ref{thm:deligne-coeff}, the coefficients of 
\[
	\ell^{-24Q} h \big|_{12(Q+s)} \pMatrix{1}{0}{1}{1}
\]
lie in $\Z[1/4Q,\zeta_{4Q}]$, which contradicts $\ell \nmid Q$.

We show that $\Omega_{Q\ell,t} \equiv 0 \pmod{\ell}$. By the argument in \cite[Lemma 4.6]{Radu:ao}, for each $r$ with $0\leq r < \ell^{j-1}$ there exists an integer $a_{r}$ with $(a_r,6Q\ell)=1$ such that
\[
	a_{r}^{2}(3t+2) \equiv 3(t + Q\ell r) + 2 \pmod{Q\ell^{j}}.
\]
So $t+Q\ell r = t_A$ for some $A \in \Gamma_0(16Q\ell^j)$ as in \eqref{def:O-t-A}. Therefore, as in the proof of Proposition~\ref{prop:M-Q-ell} we can conclude that $\Omega_{Q\ell,t} \equiv 0 \pmod{\ell}$.
\end{proof}

We will use the following proposition to prove that $\legendre{3t+2}{\ell} \neq \legendre{-1}{\ell}$.

\begin{prop} \label{prop:O-q-l-exp}
	If $4|Q$, $(Q,3\ell)=1$ and $t\equiv -(Q^{2}+32)/48 \pmod{\ell}$ then
	\begin{equation}
		\Omega_{Q\ell,t}^{48Q\ell^2} \Delta^{s} \big|_{24Q\ell^2+12s} \pMatrix{1}{0}{\ell}{1} = (2Q)^{-24Q\ell^2} q^{-Q^2 \ell + s} + \cdots.
	\end{equation}
\end{prop}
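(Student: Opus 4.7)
The plan is to mirror the proof of Proposition~\ref{prop:m-q-l-exp}, substituting the transformation law \eqref{eq:O-transform} for $\Omega$ in place of \eqref{eq:M-transform} for $M$. From the definition \eqref{eq:def-O-m-t}, write
\[
    \Omega_{Q\ell,t}\!\left(\pMatrix{1}{0}{\ell}{1} z\right) = \frac{1}{Q\ell}\sum_{\lambda=0}^{Q\ell-1} \zeta_{Q\ell}^{-\lambda(t+2/3)}\, \Omega\!\left(A_\lambda \pMatrix{d_\lambda}{\lambda'}{0}{Q\ell/d_\lambda} z\right),
\]
where $d_\lambda := (1+\ell\lambda, Q)$, the integer $\lambda'$ satisfies $\tfrac{1+\ell\lambda}{d_\lambda}\lambda' \equiv \lambda \pmod{Q\ell/d_\lambda}$, and $A_\lambda = \pMatrix{(1+\ell\lambda)/d_\lambda}{*}{\ell^2Q/d_\lambda}{d_\lambda - \ell\lambda'} \in \SL_2(\Z)$. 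The leading $q$-term at $i\infty$ arises from those $\lambda$ with $d_\lambda=Q$; these form a single coset $\lambda = \lambda_0 + Q\delta$, $0\le \delta\le \ell-1$, where $\ell\lambda_0 \equiv -1\pmod Q$, and for them the bottom row of $A_\lambda$ is $(\ell^2,\ Q-\ell\lambda')$.

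Since $4\mid Q$ and $\ell$ is odd, I can choose the residue of $\lambda'$ (mod~$\ell$) to be even, making $d = Q-\ell\lambda'$ even so that the second case of \eqref{eq:O-transform} applies. A short calculation then gives $(cw+d)^{1/2} = Q^{1/2}(\ell z+1)^{1/2}$ and $w/2 = (Qz+\lambda')/(2\ell)$, where $w = (Qz+\lambda')/\ell$, and the leading term of $M(w/2)$ is $e^{-\pi i\lambda'/(24\ell)} q^{-Q/(48\ell)}$. Dividing by $(\ell z+1)^{1/2}$, raising to the $48Q\ell^2$-th power and multiplying by $\Delta^s$ produces the leading exponent $-Q^2\ell + s$, with leading coefficient
\[
    K := \frac{(Q/2)^{1/2}}{Q\ell}\sum_{\delta=0}^{\ell-1} \zeta_{Q\ell}^{-\lambda(t+2/3)}\, w_2(A_\lambda)\, e^{-\pi i\lambda'/(24\ell)}.
\]

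Since only $K^{48Q\ell^2}$ appears in the statement, it is enough to determine $K$ up to a $48Q\ell^2$-th root of unity. I will expand $w_2(A_\lambda)$ via the explicit formula in \eqref{eq:O-transform}, peel off the $\delta$-independent factors into a single root of unity, and apply Lemma~\ref{lem:dedekind-sum} to a matrix $\pmatrix{a}{*}{\ell}{Q/2} \in \Gamma_0(\ell)$ with $a \equiv 2\bar Q \pmod\ell$ and $(a,6)=1$ to evaluate $s(-d/2,\ell^2) = s(-(Q/2)+\ell\mu,\ell^2)$, where $\lambda' = 2\mu$. Using $\lambda' \equiv \bar Q\lambda \pmod\ell$ and gathering the $\delta$-linear phases from $\zeta_{Q\ell}^{-\lambda(t+2/3)}$, $e^{-\pi i\lambda'/(24\ell)}$, $(32a-d)/(48c)$, and the Dedekind correction $\mu(1-a^2)/(12\ell)$, the sum should collapse to
\[
    \sum_{\delta=0}^{\ell-1} \exp\!\left(-\frac{2\pi i\delta}{\ell}\left(t + \frac{Q^2+32}{48}\right)\right).
\]
Under the hypothesis $t \equiv -(Q^2+32)/48 \pmod\ell$ each summand is $1$, yielding $|K| = (Q/2)^{1/2}\cdot\ell/(Q\ell) = (2Q)^{-1/2}$, and hence $K^{48Q\ell^2} = (2Q)^{-24Q\ell^2}$, as required.

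The main obstacle is the careful bookkeeping of $w_2(A_\lambda)$: the constant $(32a-d)/(48c)$ is what generates the ``$+32$'' in the condition on $t$, and the half-argument Dedekind sum $s(-d/2,c)$ is the reason for the hypothesis $4\mid Q$---both to make $d$ even (so the $w_2$ branch of \eqref{eq:O-transform} applies at all) and to permit the application of Lemma~\ref{lem:dedekind-sum} at level $\ell$ with upper-left entry $\equiv 2\bar Q\pmod\ell$. Verifying that the various $\delta$-linear phases recombine cleanly into the displayed geometric sum, rather than producing a twisted character sum with a different vanishing condition, is the delicate part of the computation.
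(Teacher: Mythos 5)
Your outline follows the paper's proof essentially step for step: the same decomposition of $\Omega_{Q\ell,t}$ along $\pmatrix{1}{0}{\ell}{1}$, the observation that the pole comes only from the $\lambda$ with $d_\lambda=Q$ (where $4\mid Q$ lets you land in the even-$d$, $w_2$/$M$ branch of \eqref{eq:O-transform}), the application of Lemma~\ref{lem:dedekind-sum} to a matrix $\pmatrix{\alpha}{*}{\ell}{Q/2}\in\Gamma_0(\ell)$ with $\alpha Q\equiv 2\pmod{\ell}$, $3\nmid\alpha$, and the collapse to the geometric sum $\sum_{\delta}e^{-2\pi i\delta\left(t+(Q^2+32)/48\right)/\ell}$, which is exactly what the paper obtains. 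Two small corrections are needed. First, the congruence for $\lambda'$ is $\lambda'\equiv Q\lambda\pmod{\ell}$, not $\lambda'\equiv\bar{Q}\lambda$; the paper simply takes $\lambda'=Q\lambda$, so that $B_\lambda=\pmatrix{(1+\ell\lambda)/Q}{-\lambda^2}{\ell^2}{Q(1-\ell\lambda)}$, and it is with this choice (together with $\alpha Q\equiv 2\pmod{\ell}$) that the $\delta$-linear phases recombine into $\frac{\alpha^2Q^2-4-12\ell^2}{48\ell}$, which is an integer multiple of $1/1$ by $\alpha^2\equiv1\pmod 3$, $4\mid Q$, and $\ell\mid\alpha^2Q^2-4$; with your stated congruence the bookkeeping would not close up. Second, at the end $|K|=(2Q)^{-1/2}$ by itself does not give $K^{48Q\ell^2}=(2Q)^{-24Q\ell^2}$: you must check that the peeled-off $\delta$-independent factor is a $48Q\ell^2$-th root of unity, which the paper secures by noting $6\ell^2\,s(-Q/2,\ell^2)\in\Z$ (so the Dedekind-sum constant is a $12\ell^2$-th root of unity) and that the remaining constants have denominators dividing $48Q\ell^2$. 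The ``delicate part'' you defer is precisely what the paper carries out, using $1+\ell\lambda_0\equiv0\pmod 4$ to control the $\lambda_0$-dependent terms; your predicted final sum is indeed what comes out.
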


Assume for the moment that the proposition is true and assume, without loss of generality, that $4|Q$. Suppose, by way of contradiction, that $\legendre{3t+2}{\ell} = \legendre{-1}{\ell}$. We will construct an integer $t'$ that satisfies both $t=t_A$ for some $A\in \Gamma_0(16Q\ell)$ and $t \equiv -(Q^2+32)/48 \pmod{\ell}$. If $\legendre{3t+2}{\ell} = \legendre{-1}{\ell}$, then there exists an $\alpha \in \Z$ with 
\[
	(3t+2)\alpha^{2} \equiv -1 \pmod{\ell}.
\]
Since $\ell \nmid \alpha$ and $(Q,3\ell)=1$, there exists $a\in \Z$ with $(a,3Q\ell)=1$ such that
\begin{equation*} 
	4a \equiv Q\alpha \pmod{\ell}. 
\end{equation*}
Let $t'$ be any integer satisfying
\begin{equation*} \label{eq:w-def-t'}
	a^{2}(3t+2) \equiv (3t'+2) \pmod{Q\ell}.
\end{equation*}
Then $t'=t_A$ for some $A \in \Gamma_0(16Q\ell)$ as in \eqref{def:O-t-A} and
\begin{equation}
	t' \equiv -\frac{(Q/4)^{2}+2}{3} \equiv  -\frac{Q^{2}+32}{48} \pmod{\ell}.
\end{equation}
So by Lemma~\ref{lem:O-t-t-A} we have $\Omega_{Q\ell,t'} \equiv 0 \pmod{\ell}$. Define 
\[
	h':= \Omega_{Q\ell,t'}^{48Q\ell^2} \Delta^{s} \in M_{24Q\ell^2+12s}(\Gamma_{1}(4Q\ell)),
\]
We have $h' \equiv 0 \pmod{\ell}$, so by Theorem~\ref{thm:deligne-mod-p} we have
\[
	h' \big|_{24Q\ell^2+12s} \pMatrix{1}{0}{\ell}{1} \equiv 0 \pmod{\ell},
\]
but this contradicts Proposition~\ref{prop:O-q-l-exp}.

\begin{proof}[Proof of Proposition~\ref{prop:O-q-l-exp}]
	By \eqref{eq:def-O-m-t} we have
\begin{align*}
	\Omega_{Q\ell,t} \left( \pMatrix{1}{0}{\ell}{1} z \right) 
	&= \frac{1}{Q\ell} \sum_{\lambda=0}^{Q\ell-1} \zeta_{Q\ell}^{-\lambda(t+2/3)} \Omega\left( \pMatrix{1}{\lambda}{0}{Q\ell} \pMatrix{1}{0}{\ell}{1} z \right) \\
	&= \frac{1}{Q\ell} \sum_{\lambda=0}^{Q\ell-1} \zeta_{Q\ell}^{-\lambda(t+2/3)} \Omega\left( B_{\lambda} \pMatrix{d_{\lambda}}{\lambda'}{0}{Q\ell / d_{\lambda}} z \right),
\end{align*}
where $d_{\lambda} := (1+\ell\lambda,Q)$, the integer $\lambda'$ is chosen to satisfy $\frac{1+\ell\lambda}{d_{\lambda}} \lambda' \equiv \lambda \pmod {Q\ell / d_{\lambda}}$, and
\[
	B_{\lambda} := 
	\pMatrix
	{\dfrac{1+\ell\lambda}{d_{\lambda}}}
	{\dfrac{-\frac{1+\ell \lambda}{d_{\lambda}} \lambda' + \lambda}{Q\ell / d_{\lambda}}}
	{Q\ell^{2} / d_{\lambda}}
	{d_{\lambda} - \ell \lambda'} \in \SL_{2}(\Z).
\]
Recall that the transformation law \eqref{eq:O-transform} for $\Omega(z)$ depends on the parity of the lower entries of $B_\lambda$. Note that if $Q/d_\lambda$ is odd, then $d_\lambda$ is even since $4|Q$. In these cases, we choose $\lambda'$ to be even so that $d_\lambda - \ell \lambda'$ is also even. By \eqref{eq:O-transform} we have
\begin{align*}
	\Omega_{Q\ell,t}\left( \pMatrix{1}{0}{\ell}{1} z \right) &= 
	\frac{1}{Q\ell} \sum_{Q/d_\lambda \text{ even }} \zeta_{Q\ell}^{-\lambda(t+2/3)} \omega_1(B_\lambda) \, d_\lambda^{1/2} \, (\ell z+1)^{1/2} \, \Omega\pfrac{d_\lambda z+\lambda'}{Q\ell/d_\lambda} \\
	&+ \frac{1}{Q\ell} \sum_{Q/d_\lambda \text{ odd }} \zeta_{Q\ell}^{-\lambda(t+2/3)} \pfrac{d_\lambda}{2}^{1/2} \omega_2(B_\lambda) \, (\ell z+1)^{1/2} M\pfrac{d_\lambda z+\lambda'}{2Q\ell/d_\lambda}.
\end{align*}
Since $\Omega(z) = 2q^{2/3}+\ldots$ and $M(z/2) = q^{-1/48}+\ldots$, the leading term of
\[
	(\ell z + 1)^{-1/2} \Omega_{Q\ell,t}\left( \pMatrix{1}{0}{\ell}{1} z \right)
\]
arises from those $\lambda$ for which $d_{\lambda} = Q$. For these $\lambda$ we may take $\lambda' = Q\lambda$ so that
\begin{equation} \label{eq:B-lambda}
	B_\lambda = \pMatrix{\frac{1+\ell\lambda}{Q}}{-\lambda^2}{\ell^2}{Q(1-\ell\lambda)}.
\end{equation}
Therefore
\[
	(\ell z + 1)^{-1/2} \Omega_{Q\ell,t}\left( \pMatrix{1}{0}{\ell}{1} z \right) = K q^{-Q/48\ell} + \cdots,
\]
where
\[
	K = \frac{1}{Q\ell} \sum_{d_{\lambda} = Q} \zeta_{Q\ell}^{-\lambda(t+2/3)} \pfrac{Q}{2}^{1/2} \omega_{2}(B_{\lambda}) \, e^{-\frac{2\pi i \lambda Q}{48\ell}}.
\]
We will compute $K$ up to $48Q\ell^2$-th roots of unity denoted $\omega_1, \omega_2,$ etc. Choose $\alpha \in \Z$ such that $\alpha Q \equiv 2 \pmod{\ell}$ and $3 \nmid \alpha$. Applying Lemma~\ref{lem:dedekind-sum} to the matrix $\pmatrix{\alpha}{*}{\ell}{Q/2}$, we obtain
\[
	s(Q\ell \lambda/2 - Q/2, \ell^{2}) = s(-Q/2, \ell^{2}) + \frac{Q\lambda}{2} \cdot \frac{1-\alpha^{2}}{12\ell} + \text{an even integer}.
\]
Since $6\ell^2 \, s(-Q/2,\ell^2) \in \Z$, the term $e^{-\pi i s(-Q/2,\ell^2)}$ is a $12\ell^2$-th root of unity which is independent of $\lambda$. If $B_{\lambda} = \pmatrix{a}{b}{c}{d}$ then by \eqref{eq:B-lambda} we obtain
\begin{align*}
	\frac{32a-d}{48c} - &\frac{1}{4}(2a+b-3-3ab+3a/c) \\
	&= \frac{\lambda(Q^{2}+32)}{48Q\ell} - \frac{3\lambda}{4Q\ell} + \frac{\lambda^{2}}{4} - \frac{\lambda(3\lambda+2\ell+3\lambda^{2}\ell)}{4Q} + \underbrace{\frac{1}{2Q} + \frac{3}{4} - \frac{Q^2+4}{48Q\ell^2}}_{\text{independent of $\lambda$}}.
\end{align*}
So we have
\begin{align*}
	K = \frac{\omega_1}{\ell\sqrt{2Q}} \sum_{d_{\lambda} = Q} e^{2\pi i R_\lambda},
\end{align*}
where
\begin{align*}
	R_\lambda &=-\frac{\lambda(t+2/3)}{Q\ell} + \frac{\lambda(Q^{2}+32)}{48Q\ell} - \frac{Q\lambda(1-\alpha^{2})}{48\ell} - \frac{3\lambda}{4Q\ell} + \frac{\lambda^{2}}{4} - \frac{\lambda(3\lambda+2\ell+3\lambda^{2}\ell)}{4Q} - \frac{Q\lambda}{48\ell} \\
	&= -\frac{t\lambda}{Q\ell} - \frac{\lambda(Q^{2}+32)}{48Q\ell} + \frac{\lambda\left( \alpha^{2}Q^{2} - 4 + 12\ell(Q\lambda - 3\lambda - 2\ell - 3\ell\lambda^{2}) \right)}{48Q\ell}.
\end{align*}
The $\lambda$ for which $d_{\lambda} = Q$ are of the form $\lambda = \lambda_{0} + \delta Q$ for $0\leq \delta <\ell$. Replacing $\lambda$ by $\lambda_{0} + \delta Q$ and recalling that $4|Q$, then ignoring the terms which are integers, we obtain
\begin{equation*}
	K = \frac{\omega_2}{\ell\sqrt{2Q}} \sum_{\delta=0}^{\ell-1} e^{2\pi i R'_\delta},
\end{equation*}
where
\begin{equation} \label{O-comp-exp}
	R'_\delta = -\frac{t\delta}{\ell} - \frac{\delta(Q^{2}+32)}{48\ell} + \frac{\delta}{\ell} \cdot  \frac{\alpha^{2}Q^{2} - 4 - 12\ell(9\ell\lambda_{0}^{2} + 6\lambda_{0} + 2\ell)}{48}.
\end{equation}
Since $d_{\lambda_0}=(1+\ell\lambda_0,Q)=Q$ and $4|Q$, we have $1+\ell\lambda_{0} \equiv 0 \pmod{4}$. Thus
\[
	9\ell\lambda_{0}^{2} + 6\lambda_{0} + 2\ell \equiv \ell \pmod{4},
\]
which implies that
\begin{equation} \label{Rp-delta}
	R'_\delta = -\frac{t\delta}{\ell} - \frac{\delta(Q^{2}+32)}{48\ell} + \frac{\delta}{\ell} \cdot \frac{\alpha^{2}Q^{2} - 4 - 12\ell^{2}}{48} + \text{ an integer}.
\end{equation}
Recall that we chose $\alpha$ so that $\alpha^{2}\equiv 1\pmod{3}$ and $\alpha Q \equiv 2 \pmod{\ell}$. These facts, together with $4|Q$, imply that
\[
	\alpha^{2}Q^{2} - 4 - 12\ell^{2} \equiv 0 \pmod{48\ell},
\]
so the third term in \eqref{Rp-delta} is also an integer. We assumed that $t\equiv -(Q^{2}+32)/48 \pmod{\ell}$, so in fact $R'_\delta \in \Z$. Therefore
\[
	K = \frac{\omega_2}{\ell\sqrt{2Q}} \sum_{\delta=0}^{\ell-1} 1 = \frac{\omega_{2}}{\sqrt{2Q}}. \qedhere
\]
\end{proof}

\section{Weakly holomorphic modular forms} \label{sec:weak-hol}

In this section we state and prove a general theorem about a large class of weakly holomorphic modular forms which contains $\eta$-quotients. For an integer or half-integer $k$, integers $B$ and $N$ (with $4|N$ if $k \notin \Z$), and a Dirichlet character $\chi$ modulo $N$, we define
\[
	\mathcal{S}(B,k,N,\chi) 
		:= \left\{ \eta^{B}(z) F(z) : F(z) \in M_{k}^{!}(\Gamma_{0}(N),\chi) \right\}.
\]
Here $M_k^!(\Gamma_0(N),\chi)$ denotes the space of meromorphic modular forms of weight $k$ with Nebentypus $\chi$ on $\Gamma_0(N)$ whose poles, if any, are supported at the cusps. A form $F\in M_k^!(\Gamma_0(N),\chi)$ transforms under $\gamma=\pmatrix abcd \in \Gamma_0(N)$ as
\[
	F \big|_k \gamma = \rho(\gamma) \chi(d) F,
\]
where
\[
	\rho(\gamma) := 
	\begin{cases}
		1 & \text{ if } k\in\Z, \\
		\epsilon_d^{-2k} \legendre cd ^{2k} &\text{ if } k \in \frac{1}{2} + \Z,
	\end{cases}
\]
and
\[
	\epsilon_d := 
	\begin{cases}
		1 & \text{ if } d\equiv 1\pmod{4}, \\
		i & \text{ if } d\equiv 3\pmod{4}
	\end{cases}
\]
(see \cite[Chapter 1]{Ono:web} for details). The following theorem generalizes \cite[Theorem 1.2]{Radu:ao} for any $f\in \mathcal S(B,k,N,\chi)$.

\begin{thm} \label{thm:weak-hol}
	Let $\ell \geq 5$ be prime. Suppose that 
\[
	f(z) = q^{B/24} \sum_{n\geq n_{0}} a_{f}(n) q^{n} \in \mathcal{S}(B,k,N,\chi)
\]
has a pole at $\infty$, leading coefficient equal to $1$, and rational $\ell$-integral coefficients. Suppose that $\ell \nmid N(24n_{0}+B)$ and $(m_B,N)=1$ \textup{(}with $m_B$ defined in \eqref{eq:def-m-B}\textup{)}. If
\[
	a_{f}(mn+t) \equiv 0 \pmod{\ell},
\]
then $\ell | m$ and $\legendre{24t+B}{\ell} \neq \legendre{24n_{0}+B}{\ell}$.
\end{thm}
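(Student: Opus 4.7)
The plan is to mirror the proofs of Theorems~\ref{thm:f-q} and~\ref{thm:omega-q}, now for the general form $f = \eta^B F \in \mathcal{S}(B,k,N,\chi)$. Define the twist
\[
    f_{m,t}(z) := \frac{1}{m} \sum_{\lambda=0}^{m-1} \zeta_m^{-\lambda(t+B/24)} f\pfrac{z+\lambda}{m};
\]
a short calculation gives $f_{m,t}(z) = q^{(t+B/24)/m} \sum_n a_f(mn+t)\,q^n$. Combining the $\eta$-transformation~\eqref{eq:eta-transformation} with the modularity of $F$ yields the analogue of Proposition~\ref{prop:M-m-t-M-m-t-A}: for every $A=\pmatrix{a}{b}{c}{d}$ in a suitable $\Gamma_0(M)$ with $3\nmid a$ (where $M$ is a multiple of $24mN$),
\[
    f_{m,t}^{24m} \big|_{12m(B+2k)} A = f_{m,t_A}^{24m},
    \qquad t_A \equiv ta^2 + B\frac{a^2-1}{24} \pmod m.
\]
After multiplying by a sufficiently large power of $\Delta$, this produces a holomorphic modular form on $\Gamma_1(M)$ to which Theorems~\ref{thm:deligne-mod-p} and~\ref{thm:deligne-coeff} apply.

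Next apply Lemma~\ref{lem:radu} with this $B$ and $N$: for each $0 \le \lambda < m/Q_{m,B}$ it produces an $a$ coprime to $6mN$ with $t_A = t+\lambda Q_{m,B}$, which is the point where the hypothesis $(m_B,N)=1$ is essential. Propagating $f_{m,t}\equiv 0 \pmod\ell$ through the analogue of Lemma~\ref{lem:M-t-t-A} and uniting the resulting progressions gives $f_{Q_{m,B},t}\equiv 0 \pmod\ell$. Write $Q_{m,B} = \ell^j Q$ with $(Q,6\ell)=1$ and prove $j>0$ by contradiction: if $j=0$, then a large power of $f_{Q,t}\Delta^s$ is a holomorphic modular form on $\Gamma_1(M')$ with $\ell \nmid M'$, and mirroring the proof of Proposition~\ref{prop:M-Q-ell} its leading coefficient at a non-$\infty$ cusp is an $\ell$-adic unit (because the leading coefficient of $f$ at $\infty$ is $1$ and $\ell\nmid NB$), contradicting Theorem~\ref{thm:deligne-coeff}. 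A further reduction analogous to the end of Proposition~\ref{prop:M-Q-ell}, using $\ell\nmid(24t+B)$, yields $f_{Q\ell,t}\equiv 0 \pmod\ell$.

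For the Legendre symbol assertion, assume $\legendre{24t+B}{\ell} = \legendre{24n_0+B}{\ell}$ for contradiction. Choose $\alpha$ with $(24t+B)\alpha^2 \equiv 24n_0+B \pmod \ell$, lift to $a$ coprime to $6QN\ell$, and let $t'$ satisfy $a^2(24t+B) \equiv 24t'+B \pmod{Q\ell}$; then $t' = t_A$ for some $A$, $t' \equiv n_0 \pmod \ell$, and hence $f_{Q\ell,t'}\equiv 0 \pmod\ell$. Compute the leading term of a large power of $f_{Q\ell,t'}\Delta^s$ at a cusp of the form $\pmatrix{1}{0}{c\ell}{1}\cdot\infty$ (for a suitable $c$ depending on $N$), exactly as in Propositions~\ref{prop:m-q-l-exp} and~\ref{prop:O-q-l-exp}: organize the defining sum by $d_\lambda = (1+c\ell\lambda, Q)$, keep only the lowest-order contributions $d_\lambda = Q$, use Lemma~\ref{lem:dedekind-sum} to manipulate the Dedekind sums arising from the $\eta^B$ multiplier, and use that $t'\equiv n_0 \pmod \ell$ together with the leading coefficient of $f$ at $\infty$ equalling $1$ to collapse the inner Gauss-type sum over $\delta$ to $\ell$. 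The resulting leading coefficient is an $\ell$-adic unit, contradicting Theorem~\ref{thm:deligne-coeff}.

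The main obstacle will be this final cusp-expansion computation: the combined multiplier from $\eta^B$ (via~\eqref{eq:eta-mult}), from the half-integer-weight factor $\rho(\gamma)$, and from the Nebentypus $\chi(d)$ must all be tracked so that the exponent of $\zeta_{Q\ell}$ in the $\delta$-sum reduces modulo $\Z$ to exactly $\delta(n_0-t')/\ell$. The hypothesis $\ell \nmid NB$ is essential here: it guarantees that the rationals appearing in those exponents (with $N$-denominators from $\chi$ and $B$-denominators from the $\eta$-twist) have $\ell$-adic valuation zero, so that the desired contradiction is visible modulo $\ell$.
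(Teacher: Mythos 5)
Your outline follows the same architecture as the paper (the twists $f_{m,t}$, the transformation law from \cite{AK}, Lemma~\ref{lem:radu}, reduction to modulus $Q\ell$, and a cusp-expansion contradiction via Theorems~\ref{thm:deligne-mod-p} and~\ref{thm:deligne-coeff}), but two steps fail as written. First, after applying Lemma~\ref{lem:radu} you can only conclude $f_{Q_{m,B},t}\equiv 0\pmod{\ell}$, and $Q_{m,B}=2^{\alpha}3^{\beta}m'$ is in general \emph{not} coprime to $6$: when $B$ and $m$ share factors of $2$ or $3$ one has $\alpha,\beta>0$, so writing $Q_{m,B}=Q\ell^{j}$ gives only $(Q,\ell)=1$, not $(Q,6\ell)=1$ as you assert. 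This is not cosmetic; it is precisely why the paper introduces $M=24N/(B,3)$ or $24N/(B,24)$, the parameter $\varepsilon\in\{0,1,2\}$, and $R=Q/2^{\varepsilon}$: at the cusp $1/(M\ell)$ the maximal value of $d_{\lambda}=(1+\lambda\ell M,Q)$ is $R$ rather than $Q$, the inner sum runs over $2^{\varepsilon}\ell$ values of $\delta$, and the relevant congruence on $t'$ must hold modulo $2^{\varepsilon}\ell$, not just modulo $\ell$. Your sketch, by assuming $(Q,6\ell)=1$ and choosing "a suitable $c$" at the cusp, suppresses exactly the part of the argument that is new in the general setting.

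Second, the collapse condition in your final step is wrong. Tracking the phases, the $\delta$-dependent exponent at the cusp is $\frac{\delta}{2^{\varepsilon}\ell}\bigl(-t'+n_{0}R^{2}+B\frac{R^{2}-1}{24}\bigr)$, so the inner sum collapses (to $2^{\varepsilon}\ell$) only if $t'\equiv R^{2}n_{0}+B\frac{R^{2}-1}{24}\pmod{2^{\varepsilon}\ell}$; already in the $f(q)$ case this is $t\equiv(1-Q^{2})/24\pmod{\ell}$, not $t\equiv 0=n_{0}\pmod{\ell}$. With your lift $a\equiv a'\pmod{\ell}$ you get only $t'\equiv n_{0}\pmod{\ell}$, and then the inner sum is a nontrivial sum of $\ell$-th roots of unity and vanishes, so you obtain no nonzero leading coefficient and no contradiction. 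The repair, as in Propositions~\ref{prop:m-q-l-exp}, \ref{prop:O-q-l-exp}, and \ref{prop:wh-exp-at-cusp}, is to take $a\equiv Ra'\pmod{\ell}$ and to use $2^{\varepsilon}\mid(B,24)$ to upgrade $(24t+B)a'^{2}\equiv 24n_{0}+B$ from a congruence modulo $\ell$ to one modulo $2^{\varepsilon}\ell$; since $R^{2}$ is a square this does not disturb the Legendre-symbol conclusion. (Minor: the hypothesis used here is $\ell\nmid N(24n_{0}+B)$, not $\ell\nmid NB$, and the paper raises $f_{m,t}$ to the power $24mN$, not $24m$, to kill the Nebentypus and multiplier.)
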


Before proving Theorem~\ref{thm:weak-hol}, we show that Theorem~\ref{thm:eta} is a corollary.

\begin{proof}[Proof of Theorem~\ref{thm:eta}]
Suppose that $f(z)$ is the $\eta$-quotient
\[
	f(z) = \prod_{\delta|N} \eta(\delta z)^{r_{\delta}} = q^{B/24} \sum_{n=0}^\infty a_f(n)q^n,
\]
and that $a_f(mn+t)\equiv 0 \pmod{\ell}$. Recall that
\begin{equation} \label{eq:def-B}
	B = \sum_{\delta|N} \delta r_{\delta},
\end{equation}
and write $f(z) = \eta^B(z) F(z)$ with 
\[
	F(z) = \frac{f(z)}{\eta^B(z)} 
	= \eta^{-B}(z) \prod_{\delta | N} \eta(\delta z)^{r_\delta}.
\]
To apply Thoerem \ref{thm:weak-hol} we use a standard criterion \cite{Newman:eta} to show that $F(z) \in M_k^!(\Gamma_0(N^4),\chi)$ for some character $\chi$. In light of \eqref{eq:def-B}, the condition
\begin{equation} \label{cond1}
	\sum_{\delta|N} \delta r_{\delta} - B \equiv 0 \pmod{24}
\end{equation}
is satisfied trivially. The condition
\begin{equation} \label{cond2}
	N^4 \left(\sum_{\delta|N} \frac{r_{\delta}}{\delta} - B\right) \equiv 0 \pmod{24}
\end{equation}
is also satisfied (to see this, consider cases depending on the value of $(N,6)$). We apply Theorem~\ref{thm:weak-hol} with $n_0=0$ to obtain $\ell | m$ and $\legendre{24t+B}{\ell} \neq \legendre{B}{\ell}$.
\end{proof}

\subsection*{Proof of Theorem~\ref{thm:weak-hol}}

Suppose that 
\[
	f(z) = q^{B/24} \sum_{n=n_{0}}^{\infty} a_{f}(n) q^{n} = q^{n_{0}+B/24} + \cdots \in \S(B,k,N,\chi)
\]
has a pole at $\infty$ and rational $\ell$-integral coefficients, and that $\ell \nmid N(24n_0+B)$. Given $m$ and $t$ with $(m_B,N)=1$, define
\begin{align} \label{eq:def-f-m-t}
	f_{m,t} &:= \frac{1}{m} \sum_{\lambda=0}^{m-1} \zeta_{m}^{-\lambda(t+B/24)} f\left(\frac{z+\lambda}{m} \right) \\
	&\hphantom{:}= q^{\frac{t+B/24}{m}} \sum a_{f}(mn+t) q^{n}. \nonumber
\end{align}

We require a transformation law for $f_{m,t}$. Define
\[
	N_{m} = 
	\begin{cases}
		m & \text{if } (m,6) = 1, \\
		8m & \text{if } (m,6) = 2, \\
		3m & \text{if } (m,6) = 3, \\
		24m & \text{if } (m,6) = 6.
	\end{cases}
\]
Note that this differs slightly from the previous definition of $N_{m}$. If $A=\pmatrix{a}{b}{c}{d} \in \Gamma_{0}(N_{m})$ has $(a,6)=1$ then we define $t_{A}$ to be any integer satisfying
\begin{equation} \label{def:wh-t-A}
	t_{A} \equiv ta^{2} + B \frac{a^{2}-1}{24} \pmod{m}.
\end{equation}

The following is proved in \cite[Section 5]{AK}.

\begin{prop} \label{lem:transform-f-m-t}
Suppose $f$ is as above, and define $\kappa = 24mN(k+B/2)$. For every $A = \pmatrix{a}{b}{c}{d} \in \Gamma_{0}(N N_{m})$ with $(a,6)=1$ we have
\[
	f_{m,t}^{24mN} \big|_{\kappa} A = f_{m,t_{A}}^{24mN}.
\]
In particular, there exists $s\in \N$ such that
\[
	\Delta^{s} f_{m,t}^{24mN} \in M_{\kappa + 12s} (\Gamma_{1}(N N_{m})).
\]
\end{prop}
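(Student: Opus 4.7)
The plan is to decompose $f = \eta^B F$ with $F \in M_k^!(\Gamma_0(N),\chi)$ and compute the action of $A = \pmatrix{a}{b}{c}{d} \in \Gamma_0(NN_m)$ with $(a,6)=1$ term-by-term in \eqref{eq:def-f-m-t}, combining \eqref{eq:eta-transformation} for $\eta$ with the slash formula $F|_k \gamma = \rho(\gamma)\chi(d) F$ for $F$ on $\Gamma_0(N)$. The engine is a matrix factorization: for each $\lambda \in \{0,\dots,m-1\}$, define $\lambda'\in\{0,\dots,m-1\}$ by $a\lambda' \equiv b + d\lambda \pmod m$. This is well-defined because $m \mid N_m \mid c$ forces $(a,m)=1$, and one obtains
\[
	\pMatrix{1}{\lambda}{0}{m} A = A_\lambda \pMatrix{1}{\lambda'}{0}{m}
\]
for a uniquely determined $A_\lambda \in \SL_2(\Z)$ whose lower-left entry is $mc$; because $NN_m \mid c$, it follows that $A_\lambda \in \Gamma_0(N)$.

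Applying the transformation laws to $f(A_\lambda w)$ with $w = (z+\lambda')/m$ yields
\[
	f\pfrac{Az+\lambda}{m} = \xi(A_\lambda)^B \rho(A_\lambda)\,\chi(d-\lambda' c)\,(cz+d)^{k+B/2}\, f\pfrac{z+\lambda'}{m},
\]
after the routine check $mc \cdot w + (d-\lambda'c) = cz+d$ collapses the two Jacobian factors into the expected automorphy factor; also $\chi(d-\lambda'c)=\chi(d)$ is $\lambda$-independent since $N \mid c$. As $\lambda$ runs through $\Z/m\Z$, so does $\lambda'$, with inverse $\lambda \equiv a^2\lambda' - ab \pmod m$ (using $ad\equiv 1\pmod m$). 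Reindexing the sum, the twist becomes
\[
	\zeta_m^{-\lambda(t+B/24)} = \zeta_m^{-\lambda'(t_A+B/24)} \cdot \zeta_m^{ab(t+B/24)}
\]
via the clean rational identity $a^2(t+B/24) = t_A + B/24$, which is a restatement of \eqref{def:wh-t-A}. The residual factor $\zeta_m^{ab(t+B/24)}$ is $\lambda$-free, and its $(24mN)$-th power equals $1$ because $24t+B \in \Z$.

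The main obstacle is the remaining bookkeeping: showing that $\xi(A_\lambda)^B \rho(A_\lambda)$ has $\lambda$-dependence that vanishes modulo $(24mN)$-th roots of unity. The $\rho$ factor is a fourth root of unity whose $\lambda$-dependence is controlled by standard quadratic reciprocity manipulations, while the $\xi^B$ contribution is handled through the Dedekind sum formula \eqref{eq:eta-mult} and Lemma~\ref{lem:dedekind-sum}; here the hypothesis $(m_B,N)=1$ is precisely what is needed to guarantee that $A_\lambda$ lies in the congruence subgroup required by that lemma, blocking any $2$-adic or $3$-adic obstruction arising from the interaction of $B$ with $m$. Taking the $(24mN)$-th power then kills every bounded-order root of unity and yields $f_{m,t}^{24mN}\big|_\kappa A = f_{m,t_A}^{24mN}$.

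For the second assertion, a standard generation argument (left multiplication by powers of $\pmatrix{1}{1}{0}{1}$ and $\pmatrix{1}{0}{NN_m}{1}$) shows that the matrices in $\Gamma_0(NN_m)$ with $(a,6)=1$ generate $\Gamma_1(NN_m)$, and a case analysis on $(m,6)$ in the definition of $N_m$ ensures $t_A \equiv t \pmod m$ for any such $A \in \Gamma_1(NN_m)$. Thus the first assertion already gives $\Gamma_1(NN_m)$-invariance of $f_{m,t}^{24mN}$ under the $|_\kappa$ action. Since $f$ is weakly holomorphic, $f_{m,t}^{24mN}$ has poles only at cusps, so multiplying by $\Delta^s$ for $s$ sufficiently large absorbs them and produces a holomorphic modular form in $M_{\kappa+12s}(\Gamma_1(NN_m))$.
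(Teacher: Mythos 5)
The paper does not prove this proposition itself --- it cites \cite[Section 5]{AK} --- and your outline follows exactly the route one would take there: factor $\pmatrix{1}{\lambda}{0}{m}A = A_\lambda\pmatrix{1}{\lambda'}{0}{m}$ with $A_\lambda\in\Gamma_0(N)$ having lower-left entry $mc$, push $A_\lambda$ through the multiplier systems of $\eta^B$ and $F$, reindex, and raise to the $24mN$-th power. The skeleton (the factorization, the collapse of the two Jacobian factors to $cz+d$, the bijection $\lambda\mapsto\lambda'$ with inverse $\lambda\equiv a^2\lambda'-ab$, and the generation-plus-$\Delta^s$ argument for the second assertion) is correct.

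There are, however, two gaps. First, your displayed reindexing identity for the twist is not exact: $\lambda\equiv a^2\lambda'-ab\pmod m$ only, and since the exponent $t+B/24$ has denominator $24m$ rather than $m$, replacing $\lambda$ by its representative in $\{0,\dots,m-1\}$ changes $\zeta_m^{-\lambda(t+B/24)}$ by a factor $e^{-2\pi i k(\lambda')B/24}$, a $24$th root of unity that \emph{depends on the summation index}. Such a factor cannot be pulled out of the sum and killed by the $24mN$-th power; it must be carried into the multiplier analysis and shown to cancel there. Second --- and this is the actual content of the proposition --- the claim that $\xi(A_\lambda)^B\rho(A_\lambda)$ (together with that residual factor) is independent of $\lambda$ is precisely the hard part, and you only assert it. The $\lambda$-dependence of $\legendre{mc}{d-c\lambda'}$ inside $\rho(A_\lambda)$ and of the exponential in \eqref{eq:eta-mult} (equivalently, of the Dedekind sums via Lemma~\ref{lem:dedekind-sum}) has to be computed explicitly; this is where the case-by-case definition of $N_m$ earns its keep. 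Your attribution of the hypothesis $(m_B,N)=1$ to this step is also unsubstantiated: in the paper that hypothesis is used later, in the cusp-expansion computation of Proposition~\ref{prop:wh-exp-at-cusp}, to control $(Q,M)$, not to make the transformation law hold. Without the multiplier computation what you have is a correct plan rather than a proof.
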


We need the following analogue of Lemma~\ref{lem:M-t-t-A}. The proof is similar so we omit it here.

\begin{lem} \label{thm:t-t'} \label{lem:wh-t-t-A}
Suppose that $f$ is as above and that $A = \pmatrix{a}{b}{c}{d} \in \Gamma_{0}(NN_{m})$ has $(a,6)=1$, and define $t_A$ as in \eqref{def:wh-t-A}. If $f_{m,t}\equiv 0\pmod{\ell}$ then $f_{m,t_A} \equiv 0\pmod{\ell}$.
\end{lem}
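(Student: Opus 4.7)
The plan is to mirror exactly the proof of Lemma~\ref{lem:M-t-t-A}, replacing $M_{m,t}$ by $f_{m,t}$ and using Proposition~\ref{lem:transform-f-m-t} in place of Propositions~\ref{prop:M-m-t-M-m-t-A} and~\ref{prop:M-m-t-modular}. The hypotheses line up: Proposition~\ref{lem:transform-f-m-t} provides both the transformation law $f_{m,t}^{24mN}\big|_{\kappa} A = f_{m,t_A}^{24mN}$ for $A \in \Gamma_0(NN_m)$ with $(a,6)=1$ \emph{and} the existence of an $s\in\N$ such that $\Delta^s f_{m,t}^{24mN} \in M_{\kappa+12s}(\Gamma_1(NN_m))$. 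Theorem~\ref{thm:deligne-mod-p} then does the rest.

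Concretely, I would proceed as follows. First, fix $s \in \N$ supplied by Proposition~\ref{lem:transform-f-m-t}, and form the holomorphic modular form
\[
	g := \Delta^s f_{m,t}^{24mN} \in M_{\kappa+12s}(\Gamma_1(NN_m)).
\]
By the first part of Proposition~\ref{lem:transform-f-m-t}, and since $\Delta$ is invariant under $\SL_2(\Z)$, we have
\[
	g\big|_{\kappa+12s} A = \Delta^s f_{m,t_A}^{24mN}.
\]
Now the hypothesis $f_{m,t} \equiv 0 \pmod{\ell}$ gives $g \equiv 0 \pmod{\ell}$. Viewing $g$ as a form on $\Gamma(NN_m)$ with coefficients in $\Z[\zeta_{NN_m}]$, and noting that $A \in \Gamma_0(NN_m) \subseteq \Gamma_0(\ell^{v_\ell(NN_m)})$, Theorem~\ref{thm:deligne-mod-p} applied at the prime $\ell$ yields $g\big|_{\kappa+12s} A \equiv 0 \pmod{\ell}$, i.e.\ $\Delta^s f_{m,t_A}^{24mN} \equiv 0 \pmod{\ell}$. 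Since $\Delta^s = q^s + \cdots$ has leading coefficient a unit modulo $\ell$, dividing through in the power series ring gives $f_{m,t_A} \equiv 0 \pmod{\ell}$, as desired.

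The only genuinely delicate point is verifying that the hypotheses of Theorem~\ref{thm:deligne-mod-p} are met, namely that $\ell^{v_\ell(NN_m)} \| NN_m$ and $A \in \Gamma_0(\ell^{v_\ell(NN_m)})$; both are automatic from $A \in \Gamma_0(NN_m)$. Everything else is essentially bookkeeping, which is why the authors merely remark that the proof is similar to that of Lemma~\ref{lem:M-t-t-A} and omit the details.
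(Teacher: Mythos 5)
Your proof is correct and is essentially the argument the paper intends: the authors omit the proof precisely because it mirrors Lemma~\ref{lem:M-t-t-A}, forming $\Delta^s f_{m,t}^{24mN}$, applying the transformation law of Proposition~\ref{lem:transform-f-m-t}, and invoking Theorem~\ref{thm:deligne-mod-p} at the prime $\ell$, exactly as you do.
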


For the remainder of this section, assume that $f_{m,t}\equiv 0 \pmod{\ell}$. Proposition \ref{prop:f-Q-ell} will show that $\ell | m$ and reduce us to a simpler case. We fix some notation. As in Section \ref{sec:preliminaries}, write $B=2^r3^sB'$ and $m=2^u3^vm'$ with $(B',6)=(m',6)=1$, and write $Q_{m,B}=2^\alpha3^\beta m'=Q\ell^j$ with $(Q,\ell)=1$. Recall that $\alpha$ and $\beta$ are defined in \eqref{eq:def-alpha-beta} by
\begin{equation} \label{eq:def-alpha-beta-2}
	\alpha := 
	\begin{cases}
		0 &\text{ if } r=0,\\
		\min(r,u) &\text{ if } r=1,2,\\
		u &\text{ if } r\geq 3,
	\end{cases}
	\qquad \qquad
	\beta :=
	\begin{cases}
		0 &\text{ if }s=0,\\
		v &\text{ if }s\geq 1.
	\end{cases}
\end{equation}
Since $\ell \nmid N$ and the primes dividing $Q_{m,B}$ are the same as those dividing $m_B$, we have $(Q,N)=1$. There are two main cases to consider depending on the value of $r$. To facilitate this, define parameters $M \in \N$ and $\varepsilon \in \{0,1,2\}$ as follows. If $r=1,2$, define $M:=\frac{24N}{(B,3)}$ and $\varepsilon:=\alpha=\min(r,u)$. If $r\neq 1,2$, define $M:=\frac{24N}{(B,24)}$ and $\varepsilon:=0$.

\begin{prop}\label{prop:f-Q-ell}
Assume the notation above. If $f_{m,t}\equiv 0 \pmod{\ell}$, then $j > 0$. If additionally $\ell \nmid (24t+B)$ then $f_{Q\ell,t}\equiv 0 \pmod{\ell}$.
\end{prop}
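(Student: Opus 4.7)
The plan is to follow the three-step structure of Proposition~\ref{prop:M-Q-ell}, substituting the tools available in the general setting: Lemma~\ref{lem:radu} for the congruence-covering argument, Proposition~\ref{lem:transform-f-m-t} for the modular transformation, Lemma~\ref{lem:wh-t-t-A} to transfer the congruence to shifted progressions, and Theorems~\ref{thm:deligne-mod-p} and \ref{thm:deligne-coeff} for the $\ell$-adic/cusp contradiction.

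First I would use Lemma~\ref{lem:radu} to reduce the modulus from $m$ to $Q\ell^j$. Since $m/Q_{m,B}=2^{u-\alpha}3^{v-\beta}$, for each $0\leq\lambda<2^{u-\alpha}3^{v-\beta}$ the lemma produces an integer $a=a_\lambda$ with $(a,6mN)=1$ and
\[
	t+\lambda Q\ell^j \equiv ta^{2}+B\frac{a^{2}-1}{24}\pmod{m}.
\]
Completing $a_\lambda$ to a matrix $A\in\Gamma_0(NN_m)$ (possible because $(a,6NN_m)=1$), Lemma~\ref{lem:wh-t-t-A} gives $f_{m,t+\lambda Q\ell^j}\equiv 0\pmod{\ell}$ for each such $\lambda$. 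Summing the $q$-expansions \eqref{eq:def-f-m-t} collapses the union $\bigcup_\lambda\{mn+(t+\lambda Q\ell^j)\}$ into $\{Q\ell^jn+t\}$, so $f_{Q\ell^j,t}\equiv 0\pmod{\ell}$.

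Next I would prove $j>0$ by contradiction. Suppose $j=0$, so $f_{Q,t}\equiv 0\pmod\ell$. By Proposition~\ref{lem:transform-f-m-t}, for $s$ sufficiently large,
\[
	g := \Delta^{s}f_{Q,t}^{24QN} \in M_{\kappa+12s}(\Gamma_1(NN_Q)),
\]
and $\ell^{-24QN}g$ has $\ell$-integral $q$-expansion at $\infty$. Since $\ell\geq 5$, $\ell\nmid N$, and (under $j=0$) $\ell\nmid Q$, we have $\ell\nmid NN_Q$, so Theorem~\ref{thm:deligne-coeff} forces the expansion of $\ell^{-24QN}g$ at every cusp to lie in $\Z[1/NN_Q,\zeta_{NN_Q}]$. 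The contradiction will come from computing the leading Fourier coefficient of $g$ at a carefully chosen cusp---the parameters $M$ and $\varepsilon$ introduced just above the proposition suggest using a cusp of the form $2^{\varepsilon}/M$---and showing that, up to roots of unity, it is a nonzero power of $Q$; dividing by $\ell^{24QN}$ then cannot lie in $\Z[1/NN_Q,\zeta_{NN_Q}]$ since $(Q,\ell)=1$.

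Finally, under the additional hypothesis $\ell\nmid(24t+B)$, I would descend from $Q\ell^j$ to $Q\ell$ by adapting \cite[Lemma 4.6]{Radu:ao}. For each $r$ with $0\leq r<\ell^{j-1}$ one constructs an integer $a_r$ with $(a_r,6Q\ell N)=1$ satisfying
\[
	a_r^{2}(24t+B)\equiv 24(t+Q\ell r)+B \pmod{Q\ell^j},
\]
so $t+Q\ell r=t_A$ for some $A\in\Gamma_0(NN_m)$. Lemma~\ref{lem:wh-t-t-A} then gives $f_{Q\ell^j,t+Q\ell r}\equiv 0\pmod\ell$ for every such $r$, and the disjoint union identity
\[
	\bigcup_{r=0}^{\ell^{j-1}-1}\{Q\ell^j n+(t+Q\ell r)\} = \{Q\ell n+t\}
\]
yields $f_{Q\ell,t}\equiv 0\pmod\ell$.

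The main obstacle is the cusp-expansion computation in the $j=0$ step. In Proposition~\ref{prop:M-Q-ell}, the cusp $1/2$ sufficed and the expansion was supplied by \cite[Proposition 9]{AK}; in the general weakly holomorphic setting, the correct cusp depends on the $2$- and $3$-adic data of $B$ (which is exactly what $\alpha$, $\beta$, $M$, and $\varepsilon$ encode), and the root-of-unity and Dedekind-sum bookkeeping must be carried out as carefully as in Proposition~\ref{prop:m-q-l-exp}. Everything else in the proposition follows formally from the covering/Lemma~\ref{lem:wh-t-t-A} machinery already developed.
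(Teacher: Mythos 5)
Your overall structure matches the paper's proof: step one (Lemma~\ref{lem:radu} plus Lemma~\ref{lem:wh-t-t-A} to pass from modulus $m$ to $Q\ell^j$) and step three (the argument of \cite[Lemma 4.6]{Radu:ao} to descend from $Q\ell^j$ to $Q\ell$ when $\ell\nmid(24t+B)$) are carried out exactly as in the paper, and both are correct as written.

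The gap is in the middle step, and you flag it yourself: you never actually produce the cusp expansion that drives the contradiction for $j=0$. Saying that ``the contradiction will come from computing the leading Fourier coefficient of $g$ at a carefully chosen cusp'' and that it should be ``a nonzero power of $Q$ up to roots of unity'' is precisely the statement that needs proof here; without it the claim $j>0$ is not established. The paper closes this by quoting the computation of \cite[Section 5]{AK}: the leading coefficient of $f_{Q,t}$ in its expansion at the cusp $1/N$ is $\xi Q^{B/2+k-1}$ with $\xi$ a $24QN$th root of unity, which is an $\ell$-unit since $(Q,\ell)=1$, so $\ell^{-24QN}\Delta^{s}f_{Q,t}^{24QN}$ has a coefficient at that cusp with an $\ell$ in the denominator, contradicting Theorem~\ref{thm:deligne-coeff}. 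Note also that your guess for the cusp is off: the parameters $M$ and $\varepsilon$ (and the cusp $1/M\ell$) are needed only for the later expansion in Proposition~\ref{prop:wh-exp-at-cusp}, where the quadratic-residue condition is extracted; for the $j>0$ step the relevant cusp is $1/N$, and the delicate $2$- and $3$-adic bookkeeping you anticipate does not enter at this stage. So the proposal is the right strategy, but as it stands the central analytic input of the $j=0$ contradiction is missing rather than proved or correctly located.
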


\begin{proof}
Applying Lemma~\ref{lem:radu} we see that for each $\lambda$ with $0\leq \lambda < m/Q\ell^j$ there exists $a \in \Z$ with $(a,6mN)=1$ such that
\[
	t + \lambda Q\ell^j \equiv t a^{2} + B\frac{a^{2}-1}{24} \pmod{m}.
\]
As in the proof of Proposition~\ref{prop:M-Q-ell}, we conclude by Lemma~\ref{lem:wh-t-t-A} that $f_{Q\ell^j,t} \equiv 0 \pmod{\ell}$.

Suppose now that $j = 0$. From \cite[Section 5]{AK}, the leading coefficient in the expansion of $f_{Q,t}$ at the cusp $1/N$ is 
\[
	\xi Q^{B/2+k-1},
\]
where $\xi$ is a $24QN$th root of unity. Using Theorem~\ref{thm:deligne-coeff} as before with the modular form $\ell^{-24QN}\Delta^{s} f_{Q,t}^{24QN}$, which has integral coefficients, we obtain a contradiction. Therefore $j>0$.

Lastly, we use \cite[Lemma 4.6]{Radu:ao} as in the proof of Proposition~\ref{prop:M-Q-ell} to conclude that $f_{Q\ell,t} \equiv 0 \pmod{\ell}$ if $\ell \nmid (24t+B)$.
\end{proof}

We will use the following to prove that $\legendre{24t+B}{\ell} \neq \legendre{24n_{0}+B}{\ell}$.

\begin{prop} \label{prop:wh-exp-at-cusp}
Assume the notation above. Let $\kappa = 24Q\ell N(k+B/2)$ and define $R:=Q/2^\varepsilon$. If
\[
	t\equiv R^{2}n_{0}+ B\frac{R^{2}-1}{24} \pmod{2^{\varepsilon}\ell},
\] 
then
\[
	\Delta^{s} f_{Q\ell,t}^{24Q\ell N} \mid_{\kappa+12s} \pMatrix{1}{0}{M\ell}{1} 
	= R^{\kappa-24Q\ell N} q^{QRNB+24RNn_{0}+s} + \cdots.
\]
\end{prop}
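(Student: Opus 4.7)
My plan is to compute the leading term of $f_{Q\ell,t}$ at the cusp $1/(M\ell)$ directly from the definition \eqref{eq:def-f-m-t}, mirroring the arguments in the proofs of Propositions~\ref{prop:m-q-l-exp} and \ref{prop:O-q-l-exp}. Starting from
\[
f_{Q\ell,t}\left( \pMatrix{1}{0}{M\ell}{1} z \right) = \frac{1}{Q\ell} \sum_{\lambda=0}^{Q\ell-1} \zeta_{Q\ell}^{-\lambda(t+B/24)} f\left( \pMatrix{1}{\lambda}{0}{Q\ell}\pMatrix{1}{0}{M\ell}{1} z \right),
\]
I would decompose each inner matrix as $A_\lambda \pMatrix{d_\lambda}{\lambda'}{0}{Q\ell/d_\lambda}$ with $d_\lambda := \gcd(1+M\ell\lambda, Q)$, where $A_\lambda \in SL_2(\Z)$ lies in $\Gamma_0(N)$ because $N \mid M$ forces $N$ to divide the bottom-left entry $QM\ell^2/d_\lambda$ of $A_\lambda$. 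Writing $f = \eta^B F$ with $F \in M_k^!(\Gamma_0(N),\chi)$ and applying \eqref{eq:eta-transformation} together with the automorphy of $F$, a short calculation shows that the $(cw+d)^{k+B/2}$ factor equals $d_\lambda^{k+B/2}(M\ell z+1)^{k+B/2}$; the $(M\ell z+1)^{k+B/2}$ piece cancels exactly against the slash-operator normalization, so each $\lambda$-summand contributes a clean term of size proportional to $d_\lambda^{k+B/2}$ carrying $q$-exponent $(n_0+B/24)d_\lambda^2/(Q\ell)$.

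Since $n_0 + B/24 < 0$, the dominant contribution comes from those $\lambda$ maximizing $d_\lambda$. A $2$-adic analysis, built on the case split \eqref{eq:def-alpha-beta-2} and the two branches of $M$, shows that $\max_\lambda d_\lambda = R = Q/2^\varepsilon$: when $r = 1,2$ the chosen $M = 24N/(B,3)$ is even, forcing $1+M\ell\lambda$ to be odd and hence $d_\lambda \mid R$; in the cases $r = 0$ or $r \geq 3$ one verifies that $\gcd(M,Q)=1$ and $R = Q$ is attained. The $\lambda$ with $d_\lambda = R$ form an arithmetic progression $\lambda_0 + \delta R$ of length $2^\varepsilon \ell$ inside $\{0,\dots,Q\ell-1\}$, for which one may take $\lambda' = R\lambda$. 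Raising to the $24Q\ell N$-th power and multiplying by $\Delta^s$ then yields the claimed $q$-exponent.

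For the leading coefficient, I would apply Lemma~\ref{lem:dedekind-sum} to a matrix of the form $\pmatrix{a}{\ast}{\ast}{R} \in \Gamma_0(\ell)$ with $aR \equiv 1 \pmod{\ell}$ and $3 \nmid a$ to isolate the $\lambda$-dependent part of the Dedekind sum appearing in $\xi(A_\lambda)^B$. After absorbing all $\lambda$-independent factors into a $24Q\ell N$-th root of unity $\omega$ and substituting $\lambda = \lambda_0 + \delta R$, the coefficient reduces to a sum of the shape
\[
K = \omega \cdot (\text{prefactor}) \cdot \sum_{\delta=0}^{2^\varepsilon\ell - 1} \exp\left( -\frac{2\pi i\, \delta}{2^\varepsilon\ell}\left( t - R^2 n_0 - B\,\tfrac{R^2-1}{24} \right) \right),
\]
where the prefactor gathers the powers of $R$ from $d_\lambda^{k+B/2}$ and the quadratic contributions from $\rho$ and $\chi$. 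The hypothesis $t \equiv R^2 n_0 + B(R^2-1)/24 \pmod{2^\varepsilon \ell}$ makes each exponential equal to $1$, and raising $K$ to the $24Q\ell N$-th power then eliminates $\omega$ and collapses the prefactor to $R^{\kappa - 24Q\ell N}$.

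The principal obstacle is the phase-factor bookkeeping. Three distinct multiplier systems — the $B$-th power $\xi^B$ of the $\eta$-multiplier, the Nebentypus $\chi$, and the half-integral factor $\rho$ — contribute simultaneously, and the modulus $2^\varepsilon \ell$ appearing in the hypothesis on $t$ is tuned precisely so that the residual Dedekind-sum remainder produced by Lemma~\ref{lem:dedekind-sum} lands in the $24Q\ell N$-th roots of unity rather than in a larger group. The case split \eqref{eq:def-alpha-beta-2} is exactly what permits this analysis to proceed uniformly across the possible values of $r$.
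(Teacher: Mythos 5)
Your overall skeleton matches the paper's: the same decomposition $\pmatrix{1}{\lambda}{0}{Q\ell}\pmatrix{1}{0}{M\ell}{1} = C_\lambda \pmatrix{d_\lambda}{\lambda'}{0}{Q\ell/d_\lambda}$ with $d_\lambda=(1+M\ell\lambda,Q)$, the observation that the pole at $\infty$ makes the terms with $d_\lambda$ maximal dominate, the identification $\max d_\lambda = R$ with the surviving $\lambda$ running over $\lambda_0+\delta R$, $0\le\delta<2^\varepsilon\ell$, and the final exponential sum that collapses under the hypothesis on $t$. The genuine gap is in the multiplier bookkeeping, which is where essentially all of the content of this proposition lives. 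You propose to treat the $\eta$-multiplier via Dedekind sums and Lemma~\ref{lem:dedekind-sum}, mimicking Propositions~\ref{prop:m-q-l-exp} and \ref{prop:O-q-l-exp}. But the Dedekind sums that would occur here are $s\bigl(R(1-M\ell\lambda),\,2^\varepsilon M\ell^2\bigr)$, and as $\lambda$ runs through $\lambda_0+\delta R$ the lower-right entry shifts by multiples of $M\ell R^2$. Lemma~\ref{lem:dedekind-sum} only controls shifts of the first argument by multiples of the $c$-entry of a $\Gamma_0(\ell)$ matrix whose $c$-entry times $\ell$ equals the modulus, i.e.\ by multiples of $2^\varepsilon M\ell$; since $R$ is odd whenever $\varepsilon\in\{1,2\}$, $M\ell R^2$ is not such a multiple, so the lemma cannot be applied as you describe (your auxiliary matrix $\pmatrix{a}{\ast}{\ast}{R}\in\Gamma_0(\ell)$ does not even have the modulus $2^\varepsilon M\ell^2$ in range). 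The paper avoids Dedekind sums here entirely: it uses the explicit multiplier \eqref{eq:eta-mult} and shows that $\xi(C_\lambda)^B$ is \emph{independent} of $\lambda$, which is exactly what the tuned definition of $M$ buys ($24\mid Bc$, $Bd\equiv BR\pmod{24}$, and $2^\varepsilon\cdot 24\mid BM$ so that $24\mid Bbd$).

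The second part of the same gap: you fold the Nebentypus and the half-integral factor into an unspecified ``prefactor'' and assert it collapses to $R^{\kappa-24Q\ell N}$ after taking the $24Q\ell N$-th power. That only works if $\chi(d\text{-entry})$ and $\rho(C_\lambda)$ are independent of $\lambda$; otherwise they contaminate the exponential sum over $\delta$ rather than sitting in a constant root of unity. The paper proves this: $\chi(1-M\ell\lambda)=1$ because $N\mid M$, and $\rho(C_\lambda)$ is $\lambda$-free by an explicit Jacobi-symbol computation using $4\mid N$ in the half-integral case and the fact that $\varepsilon=0$ when $2^2\,\|\,M$. None of these verifications appear in your plan, and they are precisely the points where the hypotheses $(m_B,N)=1$ and the case-split definition of $M,\varepsilon$ are used. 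So while your dominant-term analysis and the final collapse under $t\equiv R^2n_0+B\frac{R^2-1}{24}\pmod{2^\varepsilon\ell}$ are on target, the proposed mechanism for the multipliers does not go through as stated and the $\lambda$-independence claims that replace it are left unproved.
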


Assume for the moment that the proposition is true. Suppose, by way of contradiction, that $\legendre{24t+B}{\ell} = \legendre{24n_{0}+B}{\ell}$. We will construct an integer $t'$ satisfying both $t'=t_A$ for some $A\in \Gamma_0(NN_{Q\ell})$ and $t'\equiv R^2n_0+B(R^2-1)/24 \pmod{2^\varepsilon\ell}$. There exists an $a' \in \Z$ with 
\[
	(24t+B)a'^{2} \equiv 24n_{0}+B \pmod{\ell},
\]
and since $2^{\varepsilon}|(B,24)$, we have
\[
	(24t+B)a'^{2} \equiv 24n_{0}+B \pmod{2^{\varepsilon}\ell}.
\]
Let $a$ be an integer with $(a,6Q\ell)=1$ such that $a \equiv Ra' \pmod{\ell}$ and let $t'$ be any integer satisfying
\begin{equation*} 
	a^{2}(24t+B) \equiv (24t'+B) \pmod{Q\ell}.
\end{equation*}
Then
\begin{equation}
	t' \equiv R^{2}n_{0} + B \frac{R^{2}-1}{24} \pmod{2^{\varepsilon}\ell}
\end{equation}
and by Lemma~\ref{lem:wh-t-t-A} we have $f_{Q\ell,t'} \equiv 0 \pmod{\ell}$, but this is not compatible with Proposition~\ref{prop:wh-exp-at-cusp}.

\begin{proof}[Proof of Proposition~\ref{prop:wh-exp-at-cusp}] 
By \eqref{eq:def-f-m-t} we have
\begin{equation} \label{eq:f-qell-t-Mell}
	f_{Q\ell,t}\left(\pMatrix{1}{0}{M\ell}{1} z\right) = \frac{1}{Q\ell}\sum_{\lambda=0}^{Q\ell-1} \zeta_{Q\ell}^{-\lambda(t+B/24)} f\left(C_{\lambda} \pMatrix{d_{\lambda}}{\lambda'}{0}{Q\ell/d_{\lambda}} z\right),
\end{equation}
where $d_{\lambda} = (1+ \lambda \ell M, Q)$, the integer $\lambda'$ satisfies
\[
	\frac{1+ \lambda \ell M}{d_{\lambda}} \lambda' \equiv \lambda \pmod{Q\ell/d_{\lambda}},
\]
and $C_{\lambda}$ is the matrix
\[
	\pMatrix{ \dfrac{1+\lambda \ell M}{d_{\lambda}} }{ \dfrac{-\frac{1+\lambda \ell M}{d_{\lambda}}\lambda' + \lambda}{Q\ell/d_{\lambda}} }{M Q\ell^{2}/d_{\lambda}}{d_{\lambda}-M\ell\lambda'} \in \Gamma_{0}(N).
\]
By assumption, $f$ has a pole at $\infty$, so the leading term of 
\[
	(M\ell z+1)^{-k-B/2} f_{Q\ell, t} \left(\pMatrix{1}{0}{M\ell}{1} z\right)
\]
arises from those $\lambda$ for which $d_\lambda$ is maximized. Since $(Q,N)=1$ and since $3|Q$ only if $3|B$, we see that $(Q,M)=2^\varepsilon$. Thus, the largest value of $d_\lambda$ is $R$. For these $\lambda$ we can take $\lambda' = R\lambda$, so that
\[
	C_\lambda = \pMatrix{\frac{1+\lambda \ell M}{R}}{-\frac{M\lambda^2}{2^\varepsilon}}{2^\varepsilon M\ell^2}{R(1-M\ell\lambda)}.
\]

We compute the terms of \eqref{eq:f-qell-t-Mell} for which $d_\lambda=R$. Recall that $f(z) = \eta^{B}(z)F(z)$. We consider first the transformation of $F(z)$. We have
\begin{equation} \label{eq:transform-F}
	(M\ell z + 1)^{-k} F\left(C_{\lambda}\pMatrix{R}{\lambda R}{0}{2^{\varepsilon}\ell}z\right) 
	= \rho(C_{\lambda}) \, R^{k} \, \chi(R)\chi(1-M\ell\lambda) \, F\left(\pMatrix{R}{\lambda R}{0}{2^{\varepsilon}\ell}z\right),
\end{equation}
where
\[
	\rho(C_{\lambda}) = 
	\begin{cases}
		1 & \text{ if $k\in\Z$,} \\
		\epsilon_{R-M\ell\lambda R}^{-2k}\legendre{2^{\varepsilon}M\ell^{2}}{M\ell\lambda R - R}^{2k} & \text{ if $k\in \frac{1}{2}+\Z$.}
		\end{cases}
\]
Note that since $N|M$, we have $\chi(1-M\ell\lambda) = 1$.
We show that $\rho(C_{\lambda})$ is independent of $\lambda$. Suppose that $k\in \frac{1}{2}+\Z$. Then $4|N$, so $R(1-M\ell\lambda) \equiv R \pmod{4}$, thus $\epsilon_{R-M\ell\lambda R} = \epsilon_{R}$. Write $M=2^{\mu}M'$ with $M'$ odd and $\mu\geq 2$. We have
\[
	\legendre{2^{\varepsilon}M\ell^{2}}{M\ell\lambda R - R} = \legendre{2^\varepsilon M}{R} \legendre{2}{M\ell\lambda-1}^{\varepsilon+\mu} \legendre{M'}{M\ell\lambda-1} = \legendre{2^\varepsilon M}{R},
\]
since $\varepsilon=0$ when $\mu=2$.
Therefore $\rho(C_\lambda)=\rho$ is independent of $\lambda$.

We now consider the transformation of $\eta^{B}(z)$. We have
\begin{equation} \label{eq:transform-eta-B}
	(M\ell z+1)^{-B/2} \, \eta^{B}\left(C_{\lambda}\pMatrix{R}{\lambda R}{0}{2^{\varepsilon}\ell} z\right)
		= \xi(C_{\lambda})^{B} \, R^{B/2} \, \eta^{B}\left(\pMatrix{R}{\lambda R}{0}{2^{\varepsilon}\ell} z\right),
\end{equation}
with $\xi(C_{\lambda})$ defined in \eqref{eq:eta-mult}. Write $C_{\lambda} = \pmatrix{a}{b}{c}{d}$. If $B$ is even then the factor $(\frac{c}{|d|})^B$ or $(\frac{d}{|c|})^B$ is equal to $1$. Suppose that $B$ is odd. Then $\varepsilon=0$, $8|M$ and $c$ is even. In this case we have
\[
	\legendre{c}{|d|} = \legendre{M\ell^2}{M\ell \lambda R-R} = \legendre{M}{R} \legendre{2}{M\ell\lambda-1}^\mu \legendre{M'}{M\ell\lambda-1} = \legendre{M}{R}
\] 
by a similar argument as before. Note that $24|Bc$ by the definition of $M$. If $2^\varepsilon M$ is even, we have
\begin{align*}
	\exp \left( \tfrac{2\pi i}{24}B [ (a+d)c - bd(c^2-1) + 3d - 3 - 3cd ] \right)
		&= \exp \left( \tfrac{2\pi i}{24} [Bbd + 3Bd - 3B] \right) \\
		&= \zeta_8^{B(R-1)} \exp \left( \tfrac{2\pi i}{24} Bbd \right)
\end{align*}
since $Bd \equiv BR \pmod{24}$. If $2^\varepsilon M$ is odd, we have
\begin{align*}
	\exp \left( \tfrac{2\pi i}{24} B [ (a+d)c - bd(c^2-1) - 3c ] \right) 
		= \exp \left( \tfrac{2\pi i}{24} Bbd \right).
\end{align*}
It remains to show that $24|Bbd$. We have
\[
	Bbd = -\frac{BM}{2^\varepsilon} \lambda^2 R (1-M\ell\lambda)  \equiv 0 \pmod{24}
\]
since $2^\varepsilon \cdot 24 | BM$.
So, in each case $\xi(C_\lambda)^B=\xi^B$ is independent of $\lambda$.

From \eqref{eq:f-qell-t-Mell}, \eqref{eq:transform-F}, and \eqref{eq:transform-eta-B} we obtain
\begin{align} \label{eq:sum1}
	f_{Q\ell,t} \big|_{k+B/2} \pMatrix{1}{0}{M\ell}{1}
		= \omega_1 \frac{R^{k+B/2}}{Q\ell}\sum_{d_{\lambda}=R} \zeta_{Q\ell}^{-\lambda(t+B/24)}  f\left(\pMatrix{R}{\lambda R}{0}{2^{\varepsilon}\ell} z\right)
\end{align}
where $\omega_{1} = \rho \, \chi(R) \xi^B$ is a $24N$-th root of unity. Since $f(z) = q^{\frac{B}{24}+n_{0}}+\cdots$ and $\frac{B}{24}+n_0<0$, the leading coefficient $K$ of \eqref{eq:sum1} is given by
\[
	K = \omega_{1}\frac{R^{k+B/2-1}}{2^\varepsilon\ell}\sum_{d_{\lambda}=R} \exp\left( 2\pi i \left( -\frac{\lambda t}{Q\ell} - \frac{\lambda B}{24Q\ell} + \frac{\lambda BR}{24 \cdot 2^{\varepsilon}\ell}+\frac{n_{0}\lambda R}{2^{\varepsilon} \ell}\right)\right).
\]
Let $\lambda_{0}$ be the smallest positive integer for which $d_{\lambda}=R$. Then the $\lambda$ for which $d_{\lambda}=R$ are of the form $\lambda_{0}+\delta R$ for $0\leq \delta \leq 2^{\varepsilon}\ell-1$. Write $\lambda=\lambda_0+\delta R$. Then
\begin{align*}
	-\frac{\lambda t}{Q\ell} - \frac{\lambda B}{24Q\ell} &+ \frac{\lambda BR}{24 \cdot 2^{\varepsilon}\ell} + \frac{n_{0}\lambda R}{2^{\varepsilon} \ell} \\
		&= \frac{\delta}{2^\varepsilon \ell} \left( B\frac{R^2-1}{24} + n_0 R^2 - t \right) + \underbrace{\frac{\lambda_0}{2^\varepsilon \ell} \left( B\frac{R^2-1}{24R} + n_0 R - \frac{t}{R} \right)}_{\text{independent of $\delta$}}.
\end{align*}
So, for some $24Q\ell N$-th root of unity $\omega_2$, we have
\begin{align*}
	K = \omega_{2}\frac{R^{k+B/2-1}}{2^\varepsilon\ell}\sum_{\delta=0}^{2^\varepsilon \ell-1} \exp\left( 2\pi i \frac{\delta}{2^{\varepsilon}\ell}\left( -t + B\frac{R^{2}-1}{24}+n_{0}R^{2}\right)\right).
\end{align*}
By assumption, 
\[
	t \equiv B\frac{R^{2}-1}{24} + n_{0}R^{2} \pmod{2^{\varepsilon}\ell},
\]
so $K = \omega_{2} R^{k+B/2-1}$.
\end{proof}

\bibliographystyle{plain}
\bibliography{bibliography.bib}

\end{document}